\documentclass[10pt,a4paper]{article}

\addtolength{\oddsidemargin}{-2cm}
\addtolength{\evensidemargin}{-2cm}
\addtolength{\textwidth}{4cm}
\addtolength{\textheight}{4cm}
\addtolength{\topmargin}{-2.5cm}

\usepackage{amsmath,amsthm,amsfonts,amssymb}
\usepackage{accents}
\usepackage{cite}
\usepackage[usenames,dvipsnames,svgnames,table]{xcolor}
\usepackage{pgf,tikz}\usetikzlibrary{matrix, calc, arrows}
\usepackage[bookmarks=true, bookmarksopen=true,bookmarksopenlevel=5]{hyperref} 
\definecolor{myblue}{rgb}{0,0,0.6}     
\hypersetup{pdftitle={Space-time discontinuous Galerkin approximation of acoustic waves with point singularities},  
     colorlinks=true, linkcolor=myblue,  citecolor=myblue, filecolor=myblue,   urlcolor=myblue,  }
\usepackage{enumitem}
\usepackage{subcaption}
\usepackage{mathtools}
\usepackage[ruled]{algorithm2e}
\usepackage{soul}
\makeatletter
\renewcommand{\@algocf@capt@plain}{above}
\makeatother
\allowdisplaybreaks[4]

\newcommand*{\der}[2]{\frac{\partial #1}{\partial #2}}
\newcommand{\di}{\,\mathrm{d}}
\newcommand{\iin}{\;\text{in}\;}
\newcommand{\oon}{\;\text{on}\;}
\newcommand{\deO}{{\partial\Omega}}
\newcommand*{\conj}[1]{\overline{#1}}
\newcommand*{\N}[1]{\left\|#1\right\|}
\newcommand*{\abs}[1]{\left|#1\right|}
\newcommand*{\jmp}[1]{[\![#1]\!]}
\newcommand*{\mvl}[1]{\{\!\!\{#1\}\!\!\}}
\newcommand{\dive} {\mathop{\rm div}\nolimits}
\def\div{\mathop{\rm div}\nolimits}
\DeclareMathOperator{\diam}{diam}
\DeclareMathOperator{\dist}{dist}

\newcommand{\uu}[1]{\hbox{\boldmath$#1$}}
\newcommand{\Uu}[1]{{\mathbf{#1}}}  
\newcommand{\IF}{\mathbb{F}}
\newcommand{\IN}{\mathbb{N}}\newcommand{\IP}{\mathbb{P}}
\newcommand{\IR}{\mathbb{R}}
\newcommand{\bc}{{\Uu c}}\newcommand{\bn}{{\Uu n}}
\newcommand{\bp}{{\Uu p}}\newcommand{\bx}{{\Uu x}}
\newcommand{\bz}{{\Uu z}}
\newcommand{\bL}{{\Uu L}}\newcommand{\bN}{{\Uu N}}\newcommand{\bV}{{\Uu V}}
\newcommand{\bdelta}{{\uu\delta}}
\newcommand{\bsigma}{{\boldsymbol \sigma}}\newcommand{\btau}{{\boldsymbol\tau}}	
\newcommand{\bzero}{\Uu{0}}
\newcommand{\udelta}{{\mbox{\scriptsize $\uu{\delta}$}}}  
\newcommand{\calA}{{\mathcal A}}
\newcommand{\calE}{{\mathcal E}}\newcommand{\calF}{{\mathcal F}}
\newcommand{\calK}{{\mathcal K}}
\newcommand{\calS}{{\mathcal S}}\newcommand{\calT}{{\mathcal T}}
\newcommand{\malpha}{{\boldsymbol{\alpha}}}
\newcommand{\Hdiv}{H(\div;\Omega)}
\newcommand{\Hodiv}{H_0(\div;\Omega)}
\newcommand{\tta}{{\tt a}}
\newcommand{\ttb}{{\tt b}}
\newcommand{\OO}{{(\Omega)}}

\newcommand{\half}{\frac{1}{2}}
\newcommand{\LtO}{L^2(\Omega)}
\newcommand{\Fh}{\calF_h}
\newcommand{\Th}{{(\calT_h)}}

\newcommand{\deK}{{\partial K}}
\newcommand{\GD}{{\Gamma_{\mathrm D}}}
\newcommand{\GN}{{\Gamma_{\mathrm N}}}
\newcommand{\FT}{{\Fh^T}}
\newcommand{\FO}{{\Fh^0}}
\newcommand{\FD}{{\Fh^{\mathrm D}}}
\newcommand{\FN}{{\Fh^{\mathrm N}}}
\newcommand{\Fspa}{{\Fh^{\mathrm{space}}}}
\newcommand{\Ftime}{{\Fh^{\mathrm{time}}}}
\newcommand{\hp}{_h}
\newcommand\Vhp{v\hp}
\newcommand\Shp{\bsigma\hp}
\newcommand\hVhp{\widehat v\hp}
\newcommand\hShp{\widehat \bsigma\hp}
\newcommand*{\Norm}[1]{\left\|#1\right\|}
\newcommand{\DG}{_{\mathrm{DG}}}
\newcommand{\DGp}{_{\mathrm{DG^+}}}

\newcommand{\DGQn}{_{\mathrm{DG}(Q_n)}}
\newcommand{\DGQnp}{_{\mathrm{DG}(Q_n)^+}}
\newcommand{\bVp}{{\bV_\bp}}
\newcommand{\cM}{{\mathcal M}}
\newcommand{\vs}{{(v,\bsigma)}}
\newcommand{\vsh}{{(v\hp,\bsigma\hp)}}
\newcommand{\wt}{{(w,\btau)}}
\newcommand{\wth}{{(w\hp,\btau\hp)}}
\newcommand{\Ftn}{{\calF_h^{t_n}}}

\newcommand{\IFtime}{\IF^{\mathrm{time}}}
\newcommand{\IFD}{\IF^{\mathrm D}}
\newcommand{\IFN}{\IF^{\mathrm N}}
\newcommand{\gD}{{g_{\mathrm D}}}
\newcommand{\gN}{{g_{\mathrm N}}}
\newcommand{\wK}{{\widehat K}}
\newcommand{\wI}{{\widehat I}}
\newcommand{\wPi}{{\widehat\Pi}}
\newcommand{\wbx}{{\widehat \bx}}

\newcommand{\bll}{\mathbf{l}}

\newcommand{\mhalf}{{-\half}}
\newcommand{\hsp}{{h_\bx}}
\newcommand{\calTspace}{\calT_\hsp^\bx}
\newcommand{\hspn}{{h_{\bx,n}}}
\newcommand{\calTspacen}{\calT_{h_{\bx,n}}^\bx}
\newcommand{\htime}{{h_t}}
\newcommand{\calTtime}{\calT_\htime^t}
\newcommand{\calTspaceinit}{\calT_{0}^\bx}
\newcommand{\calTspacemarked}{\calT^\bx}
\newcommand{\whvarphi}{\widehat{\varphi}}
\newcommand{\whpsi}{\widehat{\psi}}
\newcommand{\JJ}{{J}}

\newtheorem{theorem}{Theorem}[section]
\newtheorem{lemma}[theorem]{Lemma}
\newtheorem{defin}[theorem]{Definition}
\newtheorem{proposition}[theorem]{Proposition}
\newtheorem{cor}[theorem]{Corollary}
\newtheorem{remark}[theorem]{Remark}

\begin{document}
\title{Space--time discontinuous Galerkin approximation\\
        of acoustic waves with point singularities} 
\author{Pratyuksh Bansal\thanks{SAM ETH Z\"urich,
R\"amistrasse 101, CH-8092 Z\"urich
    (pratyuksh.bansal@sam.math.ethz.ch)},
  Andrea Moiola\thanks{Department of Mathematics, University of Pavia,
    Via Ferrata 5, I-27100 Pavia
    (andrea.moiola@unipv.it)},
Ilaria Perugia\thanks{Faculty of Mathematics, University of Vienna,
Oskar-Morgenstern-Platz 1, A-1090 Vienna
  (ilaria.perugia@univie.ac.at)},
  Christoph Schwab\thanks{SAM ETH Z\"urich
R\"amistrasse 101, CH-8092 Z\"urich,
   (christoph.schwab@sam.math.ethz.ch)}}
\maketitle
\begin{abstract}
We develop a convergence theory of space--time discretizations for the linear,
2nd-order wave equation in 
polygonal domains $\Omega \subset {\mathbb R}^2$,
possibly occupied by piecewise homogeneous media with 
different propagation speeds.
Building on an unconditionally stable space--time DG formulation developed in~\cite{MoPe18}, we 
(a) prove optimal convergence rates for the space--time scheme with 
local isotropic corner mesh refinement on the spatial domain, and
(b) demonstrate numerically optimal convergence rates
of a suitable \emph{sparse} space--time version of the DG scheme.
The latter scheme is based on the so-called \emph{combination formula}, 
in conjunction with a family of anisotropic space--time DG-discretizations.
It results in optimal-order convergent schemes, also in domains with corners,
with a number of degrees of freedom that scales essentially like the DG solution
of one stationary elliptic problem in $\Omega$ on the finest spatial grid.
Numerical experiments for both smooth and singular solutions
support convergence rate optimality on spatially refined meshes of
the full and sparse space--time DG schemes.
\end{abstract}

\begin{center}
\textbf{Dedicated to the memory of John W.~Barrett}
\end{center}

\section{Introduction}
\label{sec:Intro}
In recent years, substantial interest and progress has been made on 
so-called \emph{space--time discretizations of evolution equations}.
This development has been driven by several factors. We mention only
the need to compute the evolution of the solution over a
finite time interval $(0,T)$ rather than
a sequence of spatial solutions resulting in the solution only at the 
final time horizon $T$. 
This issue arises for example in optimal control of
parabolic and hyperbolic PDEs, where the feedback depends on the solution
of an adjoint PDE which is driven by a functional of the forward solution.
Similar issues and needs arise in space--time optimization of such PDEs,
and in particular in space--time adaptivity.
Here, traditional time-marching schemes encounter several difficulties
which can be easily overcome by combined space--time, ``monolithic'' discretizations.
This is purchased, however, with a number of new issues: 
increased memory requirement for the simultaneous storage of the entire
solution history over $(0,T)$, efficient solvers, etc. 
In the present paper, we address a class of space--time discretizations of
the linear, acoustic wave equation in two space dimensions, in polygonal
domains which are occupied by possibly heterogeneous media with 
corresponding variable wave speeds. 
Acoustic (and other types of) waves exhibit diffraction at conical singularities
such as corners or multi-material interfaces (e.g.\ \cite{KEL62,LBA02,Borovikov}).
It has been clarified in the past decades
(e.g.\ \cite{KokPlam2004,KNP_Dir99,KoPlaMixed} and the references there)
that these singularities are, in a sense, 
the hyperbolic counterpart of elliptic conical singularities, and
that the solutions of linear, second order hyperbolic equations
admit regularity results in scales of corner-weighted spaces.
Based on the regularity results in \cite{KokPlam2004,KNP_Dir99,KoPlaMixed,LuTu15},
it was shown by some of the authors of the present manuscript 
in \cite{MuScSc18} that high-order, time-marching discretizations of linear,
acoustic wave equations with FEM discretizations
in the spatial domain can recover  the maximal convergence rate 
afforded by the elemental polynomial degree, even in the presence of
conical singularities.

\subsection{Previous results}
\label{sec:PrevWrk}
The necessity of some form of stabilization has been recognized early on in the 
development of space--time discretizations.
We refer to the classical stabilized spacetime FEM \cite{HughHulb88,HughHulb90},
where least squares, mesh-dependent stabilization has been introduced.
For general first-order systems, Monk--Richter \cite{MoRi05} introduced a 
DG space--time scheme based on numerical fluxes which upwind in the transport direction.
More recently, Trefftz DG methods, i.e.\ schemes employing locally exact solutions of the wave equation,
have been considered by Moiola--Perugia \cite{MoPe18}, 
following the DG framework of \cite{MoRi05}, and 
Banjai--Georgoulis--Lijoka \cite{BGL2016}, in an interior penalty (IP) setting.
Most of these error analyses have been developed under the assumption that the exact solution is sufficiently smooth, as a function of space and time.
In polyg\-o\-nal spatial domains or in domains composed of different homogeneous materials, high regularity of solutions in standard Sobolev scales is known to fail,
due to diffractive solution components with point singularities.
We refer to results obtained in recent years on the regularity 
of solutions, also in nonsmooth domains, which are phrased in terms
of \emph{corner-weighted Sobolev spaces of Kondrat'ev type},
see \cite{KokPlam2004} and the references there, and 
the more recent Luong--Tung \cite{LuTu15}.
The solutions of linear, second-order wave equations 
in polygons has been shown in these
references to consist of a smooth part plus a finite, 
time-dependent linear combination of singular ``corner'' solutions,
with coefficients which depend regularly on the time-variable.
The canonical corner solutions 
correspond to certain solutions of 
certain conical diffraction problems in infinite wedges
given by tangent cones to the polygon with vertices in the corners.

A-priori estimates of solutions in corner-weighted spaces were
used in M\"uller--Schwab \cite{MuSc15}, 
and in the PhD thesis \cite{MullerPhD}, 
to develop convergence rate bounds for time-marching 
spatial DG-FEM approximations of
time-domain wave propagation in polygonal domains under 
realistic regularity assumptions on the solution.

Concurrent (full-tensor, in our terminology) 
space--time discretizations for linear wave equations 
were already proposed by \cite{HughHulb88,HughHulb90}.
Error bounds for smooth solutions were obtained and numerical examples
in one spatial dimension were reported.
For related recent numerical analysis results of space--time discretization
schemes for evolution equations, we mention
the work Cangiani--Dong--Georgoulis \cite{CDG17} where space--time DG schemes for 
linear, parabolic PDEs were investigated on prismatic meshes.
Maximal regularity of the solution in the spatial and the temporal variable 
was assumed in the analysis in \cite{CDG17}.
Steinbach--Zank \cite{SteinbachZank2019} proposed a stabilization for a
conforming space--time finite element method on Cartesian meshes, 
in order to overcome time-step restrictions in meshes that are locally refined in space.
Ernesti--Wieners \cite{WienersxtDGWave2019} consider a (full-tensor) \emph{Petrov-Galerkin (DPG) discretization} of linear acoustic wave equations, 
and prove a-priori error bounds assuming maximal solution regularity
 in the spatial and temporal variable {in standard Sobolev spaces, i.e.\
precluding point singularities admitted in the present work.}
The work \cite{dohr2018parallel} focuses again on space--time discretization 
of the heat equation, and addresses the solver complexity.
A space--time DG discretization for the parabolic Navier-Stokes equation was proposed in \cite{VanderVxtNSE2013}.

For the linear, acoustic wave equation, 
the CFL constraint encountered in explicit time-stepping
is exacerbated by local spatial mesh-refinement near corners.
One remedy with a space--time discretization flavour is here 
the so-called \emph{local time-stepping}. 
We refer to Diaz--Grote \cite{DiazGrote2015} for details
and to \cite{PetSched} for a method-of-lines based, 
related approach to circumvent the CFL-constraint in 
explicit time-marching.

The recent work on explicit, marching-type space--time schemes 
by Gopalakrishnan--Sch\"oberl--Winter\-steiger \cite{GSW16} 
is rooted in Falk--Richter \cite{FalkRichter1999}.
These schemes are based on so-called \emph{tent-pitched space--time meshes}, 
where the PDE can be explicitly evolved in the causal direction, i.e.
from the ``bottom'' to the ``top'' of the space--time cylinder element by element, thereby avoiding global CFL constaints. 
This evolution is performed, after mapping each element into a 
space--time cylinder, by applying a Runge-Kutta or a Taylor time-stepping; 
see~\cite{GSW16} and~\cite{GHSW19}, respectively. 
An alternative is to combine tent-pitching with Trefftz basis  functions~\cite{StockerSchoeberl}.

\subsection{Contributions}
\label{sec:Contr}

In the present paper, 
we obtain the following novel contributions.
Firstly, we extend the consistency analysis of the space--time DG discretization
proposed in \cite{MoPe18} to
general (namely non-Trefftz) discrete spaces and to the (realistic) setting 
of solutions which exhibit spatial point singularities situated at corners of 
the spatial domain $\Omega$ or, for transmission problems, 
at multimaterial interface points.
These points generate, in two spatial dimensions, diffraction terms which propagate radially with the acoustic speed of sound afforded by the medium.
In this case, convergence rate bounds which are based on maximal spatial 
regularity of solutions on quasi-uniform meshes of the spatial domain
are moot, as higher order spatial regularity can 
only be expressed in scales of \emph{corner-weighted Sobolev spaces} of Kondrat'ev type. 
Our present error analysis and convergence rate bounds (see Proposition~\ref{prop:rough_errorBounds_lrefMeshes})
also cover \emph{interior singularities} of solutions as arise, 
for example, due to multi-material interfaces in $\Omega$,
based on corresponding regularity in weighted Sobolev scales; see~\cite{MuSc15,MuScSc18}
and the references there.

Furthermore, in this (realistic, i.e.\ including corner-singularities) 
setting, we propose a \emph{novel, sparse space--time DG discretization}.
It allows to build an approximate solution of the evolution equation
in error vs.\ work which corresponds, asymptotically, 
to that of one elliptic solve at the highest spatial resolution in~$\Omega$, 
with the corresponding number of degrees of freedom.
This is comparable with the error vs.\ work offered, for example, by
integral equation based methods combined with convolution quadrature 
(see, e.g., \cite{BanjLubMel2011} and the references there), 
without access to explicit fundamental solutions.

The \emph{consistency error bounds} in mesh-dependent norm
we provide for the full space--time DG scheme holds true without any
time-step size constraint, also in the presence of 
corner singularities and spatially refined meshes. 
This indicates that its sparse--tensor version may feature superior error vs.\ work 
performance, as observed subsequently in detailed numerical experiments.
Unlike integral-equation-based methods, which require an explicit fundamental solution, the presently proposed results and space--time DG discretization does \emph{not} mandate homogeneous media: 
the presently considered \emph{DG formulation} can accomodate also more general, inhomogeneous but (piecewise) sufficiently regular coefficients (in particular the wave speed).
Here we focus on the piecewise-constant materials case and leave the extension of the analysis to smooth coefficients to future work.

Some further comments on the novelty and the generality of our results are in order.
Part of the abstract analysis of the DG scheme follows that of \cite{MoRi05,MoPe18}, 
but it differs in that the techniques used in these references 
do not allow the use of tensor-product discrete spaces, see Remark~\ref{rem:MonkRichter} for details.
The presently developed error analysis is a high-order ``$h$-convergence'' analysis; 
it does not establish ``$p$-version'' convergence for increasing polynomial degrees on a fixed space--time mesh.
Finally, we only consider two-dimensional spatial domains
as the regularity and corner singularity theory for the wave equation in polygons, in weighted spaces of finite order is nowadays quite complete, 
as described above. 
The corresponding theory for solutions in polyhedra (with diffraction from both, vertex and edge singularities) will mandate
\emph{anisotropic mesh refinement in the vicinity of edges} which, in turn, 
requires significant technical modifications in all parts of the present DG error analysis and whose development is beyond the scope of the present paper.

\subsection{Outline}
\label{sec:Outline}
The structure of this paper is as follows. 
In \S\ref{sec:IBVP} we state initial boundary value problems for the acoustic wave equation
for a homogeneous medium in both first- and second-order formulations; 
we also provide sufficient conditions for their well-posedness. 
\S\ref{s:Regularity} reviews relevant regularity results on the exact solution
from \cite{MuSc15,MullerPhD}, based on \cite{KokPlam2004,LuTu15}.
\S\ref{s:Mesh} prepares the (somewhat involved) notation for the ensuing
development of the space--time DG discretization. \S \ref{sec:InvIneq}
in particular has some polynomial inverse inequalities.
\S \ref{s:DG} then develops the space--time DG formulation,
with corresponding existence and uniqueness results of the discrete solution,
and abstract, ``quasi-optimality-like'' a-priori error bounds.
\S \ref{s:ErrorBounds} then provides the convergence rate bounds for the full-tensor space--time DG scheme,
subject to either smooth solutions or solutions belonging to corner-weighted spaces.
\S \ref{sec:RefMesXDom} discusses the generation and properties 
of families of corner-refined triangulations 
of the spatial domain and their complexity, leading to convergence rate bounds for the space--time DG scheme posed on such meshes.
\S \ref{s:implement_numexp} describes the outcomes of several numerical examples 
involving smooth and singular solutions in homogeneous and heterogeneous media, quasi-uniform and locally refined meshes.
\S \ref{sec:SprseXT} finally introduces the sparse space--time DG discretization, 
compares its computational complexity against the full-tensor approach,
and provides numerical results for the sparse version of the scheme applied to the same examples of the previous section.

\section{Initial boundary value problem}
\label{sec:IBVP}
In this section, we introduce the model problem given by linear, acoustic
wave propagation in a bounded, polygonal domain $\Omega$.
We discuss its well-posedness.

\subsection{Problem statement}
\label{sec:PrbStat}
We consider an initial boundary value problem (IBVP) for the linear, acoustic wave-equation posed on a space--time domain $Q=\Omega\times I$, 
where $\Omega\subset\IR^2$ is an open, bounded, Lipschitz polygon with 
straight sides and with outward unit normal $\bn_\Omega^x$, and 
where $I=(0,T)$, $T>0$, denotes a time-interval with finite time horizon.

The boundary of $\Omega$ is divided in two parts, with mutually
disjoint interiors, denoted $\GD$ and $\GN$ corresponding to
Dirichlet and Neumann boundary conditions, respectively; 
one of them may be empty.
The first-order acoustic wave IBVP reads as
\begin{align}\label{eq:IBVP}
\left\{\begin{aligned}
&\nabla v+\der{\bsigma}t = \bzero &&\iin Q,\\
&\nabla\cdot\bsigma+c^{-2}\der{v}t = f &&\iin Q,\\
&v(\cdot,0)=v_0, \quad \bsigma(\cdot,0)=\bsigma_0 &&\oon \Omega,\\
&v=\gD, &&\oon \GD\times [0,T],\\
&\bsigma\cdot\bn_\Omega^x=\gN, &&\oon \GN\times [0,T].
\end{aligned}
\right.
\end{align}
Here $f,v_0,\bsigma_0,\gD,\gN$ are the problem data;
$c(\bx)\ge c_0>0$ is the  wave speed, which is assumed to be piecewise constant 
on a fixed, finite polygonal partition $\{ \Omega_i \}$ of $\Omega$ 
which is independent of $t$.
We denote by $\calS:=\{\bc_i,i=1\ldots,M\}$ the set of all vertices of the 
polygons $\Omega_j\subset \Omega$ on which $c$ is constant.
We also assume that $\overline{\Omega}_j\cap \partial \Omega$ is 
contained in either $\overline{\Gamma}_D$ or in $\overline{\Gamma}_N$.
This IBVP is a special case of \cite[(1)]{MoPe18} and \cite[\S6]{MoRi05}, where also impedence boundary conditions were allowed.

The corresponding IBVP for the second-order scalar wave equation is:
\begin{align}\label{eq:IBVP_u}
\left\{\begin{aligned}
&-\Delta u+c^{-2}\der{{^2}u}{t^2}=f&&\iin Q,\\
&\der u t(\cdot,0)=v_0, \quad u(\cdot,0)=u_0 &&\oon \Omega,\\
&\der ut=\gD, &&\oon \GD\times [0,T],\\
&-\bn_\Omega^x\cdot\nabla u=\gN, &&\oon \GN\times [0,T].
\end{aligned}
\right.
\end{align}
Formally, given a smooth solution $u$ of \eqref{eq:IBVP_u}, 
its first-order derivatives $(v,\bsigma)=(\der ut,-\nabla u)$ constitute 
a solution of \eqref{eq:IBVP} with $\bsigma_0=-\nabla u_0$.
Vice versa, if $(v,\bsigma)$ is a smooth solution of \eqref{eq:IBVP} with $\bsigma_0=-\nabla u_0$, 
then $u(\cdot,t)=u_0+\int_0^t v(\cdot,s)\di s$ is solution of \eqref{eq:IBVP_u}.

The IBVP \eqref{eq:IBVP_u} can be written with the Dirichlet condition in the more 
commonly encountered form $u=G_{\mathrm D}$ on $\GD\times[0,T]$ simply by taking 
$G_{\mathrm D}(\bx,t):=u_0(\bx)+\int_0^t\gD(\bx,s)\di s$ for $(\bx,t)\in \GD\times[0,T]$, for, e.g., 
continuous $u_0$ and $\gD$.
An IBVP with boundary condition $u=G_{\mathrm D}$ on $\GD\times[0,T]$ satisfying $G_{\mathrm D}(\bx,0)=u_0(\bx)$ can be written as \eqref{eq:IBVP_u} simply by taking $\gD=\der{G_{\mathrm D}}t$.

\subsection{Variational solutions}
In order to give conditions ensuring the well-posedness of the first-order IBVP \eqref{eq:IBVP}, we firstly consider the analogous problem for the second-order wave equation \eqref{eq:IBVP_u}.
In this section we only consider the homogeneous Dirichlet case, namely with $\GN=\emptyset$ and $\gD=0$.

A classical result on variational methods for evolution equations, see
\cite[pp.~581--582, Chapt.~XVIII, \S6.1]{DL5}, states that, if
$$
f\in L^2(Q), \quad
u_0\in H^1_0\OO,\quad
v_0\in L^2\OO,\quad
\GN=\emptyset,\quad
\gD=0,
$$
then \eqref{eq:IBVP_u} admits a unique variational solution
\begin{equation}\label{eq:WPu}
u\in C^0\big([0,T];H^1_0\OO\big)\cap C^1\big([0,T];L^2\OO\big)\cap H^2\big(0,T;H^{-1}\OO\big),
\end{equation}
where we use the Bochner-space notation as in, e.g., \cite{DL5}.
The fact that $\der{{^2}u}{t^2}\in L^2(0,T; H^{-1}\OO)$ 
follows from the wave equation, the space regularity of $u$ and $f$, 
and the mapping $\Delta:H^1_0\OO\to H^{-1}\OO$.

We state some simple properties of negative-regularity Sobolev spaces of vector fields.
We define $\Hodiv^*$ as the dual space of the classical space 
$\Hodiv=\{\btau\in \LtO^2,\nabla\cdot\btau\in\LtO, \btau\cdot\bn^x_\Omega=0 \iin H^\mhalf(\deO)\}$.
Then, for all $w\in\LtO$, we have that $\nabla w\in \Hodiv^*$: this is because the duality
$\langle \nabla w,\bz\rangle_{H_0(\div;\OO)^*\times H_0(\div;\OO)}
  =  -\int_\Omega w \nabla\cdot\bz\di\bx$ for $\bz\in H_0(\div;\OO)$ 
is an extension of the $(L^2\OO)^2$ scalar product
(equivalently, $\nabla: L^2\OO\to\Hodiv^*$ as it is the adjoint of $-\nabla\cdot: \Hodiv\to L^2\OO$).
Moreover, the embedding $H_0(\div;\OO)^*\subset (H^{-1}\OO)^2$ holds,
since the inclusion $(H^1_0\OO)^2\subset H_0(\div;\OO)$ is dense and continuous and $H^{-1}\OO=H^1_0\OO^*$.

We now make use of the second-order equation in the context of the first-order system \eqref{eq:IBVP}.
If
\begin{align}\label{eq:WPassumpt}
f\in L^2(Q), \quad
\bsigma_0\in L^2\OO^2,\quad
v_0\in L^2\OO,\quad
\GN=\emptyset,\quad
\gD=0,
\end{align}
we define $u_0\in H^1_0\OO$ to be the solution of the Poisson equation 
$-\Delta u_0=\nabla\cdot\bsigma_0\in H^{-1}\OO$ in
$\Omega$.
Then we denote by $u$ the unique variational solution of \eqref{eq:IBVP_u} with such initial condition $u_0$, and define
\begin{align}\label{eq:WPregul}
\vs:=&\Big(\der ut,\; -\nabla u+\nabla u_0+\bsigma_0\Big)\\&
\in \Big(C^0\big([0,T],L^2\OO\big)\cap H^1\big(0,T; H^{-1}\OO\big)\Big)
\times\Big(C^0\big([0,T],(L^2\OO)^2\big)\cap C^1\big([0,T]; H_0(\dive;\Omega)^*\big)\Big).
\nonumber
\end{align}
Then, $\vs$ is solution of the first-order IBVP~\eqref{eq:IBVP}.
Note that, in the definition of $\bsigma$, the time-independent term $\nabla u_0+\bsigma_0$ is needed in order to
deal with initial conditions $\bsigma_0$ that are not gradients, see
\cite[Rem.~1]{MoPe18}.

Thus, for all data satisfying \eqref{eq:WPassumpt}, we have defined a solution field $\vs$.
In which sense does $\vs$ solve the IBVP~\eqref{eq:IBVP}?
If it is smooth, e.g.\ $\vs\in (C^1(Q)\cap C^0(\conj Q))\times (C^1(Q)\cap C^0(\conj Q))^2$, then~\eqref{eq:IBVP} holds pointwise in the classical sense.
More generally, we deduce from \eqref{eq:WPregul} that the two PDEs
in~\eqref{eq:IBVP} 
hold in the following spaces: 
$$
\nabla v+\der{\bsigma}t = \bzero \;\iin C^0\big([0,T]; H_0(\dive;\OO)^*\big),
\qquad
\nabla\cdot\bsigma+c^{-2}\der{v}t = f \;\iin L^2\big(0,T; H^{-1}\OO\big)
$$
and the initial conditions $v=v_0$, $\bsigma=\bsigma_0$ hold in
$L^2\OO$.
The reason why the scalar equation is valid in $L^2\big(0,T;H^{-1}\OO\big)$
only and not in $C^0\big([0,T];H^{-1}\OO\big)$ is that $f\in L^2(Q)$.
The spaces in~\eqref{eq:WPregul} do not allow to take traces of $\vs$ on $\deO\times(0,T)$, 
so the boundary condition $v=\gD=0$ on $\deO\times[0,T]$ is satisfied
only weakly.
If $\vs$ is a smooth solution, by testing the PDEs of \eqref{eq:IBVP} 
against a test field $\wt$ and integrating by parts, 
we obtain 
\begin{align}\nonumber
\calA\big(\vs,\wt\big)
&=-\int_Q \bigg(v\Big(\nabla\cdot\btau+c^{-2}\der wt\Big)
  + \bsigma\cdot\Big(\nabla w+\der\btau t\Big)\bigg)\di V
+\int_{\Omega\times\{T\}}(\bsigma\cdot\btau+c^{-2}vw)\di\bx 
\\
\label{eq:WPA}
&=\int_Q fw \di V+\int_{\Omega\times\{0\}}(\bsigma_0\cdot\btau+c^{-2}v_0w)\di\bx
\\
\forall\wt\in \Big(&C^0\big([0,T];H^1_0\OO\big)\cap H^1\big(0,T;\LtO\big)\Big)
\times \Big(C^0\big([0,T];\Hdiv\big)\cap C^1\big([0,T];(\LtO)^2\big)\Big).
\nonumber
\end{align}
Here and in the following, our notation for differentials within integrals is as follows: we use $\!\di\bx=\!\di x_1 \di x_2$ within integrals over (two-dimensional) spatial regions, $\!\di V=\!\di\bx\di t$ within volume integrals over (three-dimensional) space--time domains, and $\!\di S$ within surface integrals over general two-dimensional surfaces in space--time.

Note that, of the two boundary terms expected from integration by parts, 
$\int_{\deO\times(0,T)}\bsigma \cdot\bn_\Omega^x w\di S$ is not present in $\calA(\cdot,\cdot)$ as 
$w\in C^0\big([0,T];H^1_0\OO\big)$, while the absence of the term
$\int_{\deO\times(0,T)}v\btau\cdot\bn_\Omega^x\di S$ 
corresponds to weakly imposing the homogeneous Dirichlet boundary condition.
By density\footnote{We sketch here the density argument.
Given $\vs$ a variational solution as in \eqref{eq:WPregul}, there is $u$, solution of the second order problem \eqref{eq:IBVP_u}, with regularity \eqref{eq:WPu}.
By density of smooth functions in Sobolev spaces, there exists a sequence $u_j\in C^\infty(\conj Q)$ such that $u_j\to u$ in $C^0\big([0,T];H^1_0\OO\big)\cap C^1\big([0,T];L^2\OO\big)\cap H^2\big(0,T;H^{-1}\OO\big)$ norm.
Defining $(v_j,\bsigma_j):=(\der{u_j}t,-\nabla u_j)$, we have $(v_j,\bsigma_j)\to\vs$ in the norm of \eqref{eq:WPregul}, and 
$\nabla v_j+\der{\bsigma_j}t = \bzero$, $\nabla\cdot\bsigma_j+c^{-2}\der{v_j}t = f_j$, where $f_j\to f$ in $L^2\big(0,T; H^{-1}\OO\big)$.
Moreover $(v_{0,j},\bsigma_{0,j}):=(v_j,\bsigma_j)|_{t=0}\to\vs$ in $L^2\OO^{1+2}$ because of the definition of the space in \eqref{eq:WPregul}.
From the continuity of the bilinear form $\calA$ in the topology in which we have convergence, we deduce that, for all $\wt$ as in \eqref{eq:WPA},
$$
\calA(\vs,\wt)
=\lim_{j\to\infty}\calA((v_j,\bsigma_j),\wt)
=\lim_{j\to\infty} \int_Q f_jw+\int_{\Omega\times\{0\}}(\bsigma_{0,j}\cdot\btau+c^{-2}v_{0,j}w)
=\int_Q fw+\int_{\Omega\times\{0\}}(\bsigma_{0}\cdot\btau+c^{-2}v_{0}w).
$$
}, the variational identity \eqref{eq:WPA} is satisfied also by rougher $\vs$ with regularity \eqref{eq:WPregul}:
this makes precise the sense in which
the boundary conditions are weakly satisfied by $\vs$.

In order to show uniqueness of the variational solution
  of~\eqref{eq:IBVP}, let $\vs$ be a solution of~\eqref{eq:WPA},
with $f=0$, $v_0=0$, and $\bsigma_0=\bzero$,
that belongs to the space indicated in~\eqref{eq:WPregul}.
Define
\[
u(\cdot,t):=\int_0^tv(\cdot,s)\di s,
\]
which, from the regularity~\eqref{eq:WPregul} of $\vs$, satisfies
$u\in C^1\big([0,T],L^2\OO\big)\cap H^2\big(0,T; H^{-1}\OO\big)$.
Note that the continuity in time of $v$ implies that $u(\cdot,0)=0$ in $L^2\OO$.
As discussed above, in this setting, $\vs$ satisfies
\begin{equation}\label{eq:eqhomog}
\nabla v+\der{\bsigma}t = \bzero \;\iin C^0\big([0,T]; H_0(\dive;\OO)^*\big),
\qquad
\nabla\cdot\bsigma+c^{-2}\der{v}t = 0 \;\iin C^0\big(0,T; H^{-1}\OO\big).
\end{equation}

By integrating the first equation in~\eqref{eq:eqhomog} in time, taking into
account that $\bsigma_0=\bzero$, we obtain
\[
\bzero=\int_0^t\nabla v(\cdot,s)\di s +\int_0^t
\der{\bsigma}t(\cdot,s) \di s 
=\nabla \int_0^t v(\cdot,s)\di s+\bsigma(\cdot,t)
=\nabla u(\cdot,t)+\bsigma(\cdot,t),
\]
namely,
\[
\nabla u(\cdot,t)=-\bsigma(\cdot,t) \;\iin
C^0\big([0,T],(L^2\OO)^2\big)\cap C^1\big([0,T]; H_0(\dive;\Omega)^*\big),
\]
thus $u$ also belongs to $C^0\big([0,T],H^1\OO\big)$.
  
The fact that $u$ has zero Dirichlet trace on $\partial\Omega$
follows from the homogeneous weak Dirichlet boundary condition on $v$, 
namely
$\int_{\deO\times(0,T)}v\btau\cdot\bn_\Omega^x\di S=0$ for all
$\btau\in C^0\big([0,T];\Hdiv\big)\cap C^1\big([0,T];(\LtO)^2\big)$.
In fact, taking $\btau$ independent of time, replacing $v$ by $\der{u}t$, and
integrating by parts in time immediately give that the trace of
$u(\cdot,T)$ on $\partial\Omega$ is zero in the sense of
$H^{1/2}(\partial\Omega)$, thus $u(\cdot,T)\in H^1_0\OO$.
Therefore, by integrating by parts in time, taking into account that
the initial and final traces of $u$ on $\partial\Omega$ vanish,
we have
\[
  0=\int_{\deO\times(0,T)}v\btau\cdot\bn_\Omega^x\di S
  =\int_{\deO\times(0,T)}\der{u}t\btau\cdot\bn_\Omega^x\di S
  =-\int_{\deO\times(0,T)}u\der{\btau}t\cdot\bn_\Omega^x\di S
\]
for all $\der{\btau}t\cdot\bn_\Omega\in
C^0([0,T];H^{-1/2}(\partial\Omega))$,
from which we deduce that the trace of $u$ on
$\partial\Omega\times(0,T)$
is zero in $C^0([0,T];H^{1/2}(\partial\Omega))$.
Consequently, $u$ also belongs to
$C^0\big([0,T],H^1_0\OO\big)$, and we conclude that $u$ has the
regularity~\eqref{eq:WPu}.

Finally, inserting $v=\der{u}t$ and $\bsigma=-\nabla u$ into the second equation
in~\eqref{eq:eqhomog}
gives
\(
-\Delta u+c^{-2}\der{{^2}u}{t^2}=0 \;\iin C^0\big(0,T; H^{-1}\OO\big),
\)
together with $u(\cdot,0)=0$ and $\der{u}t(\cdot,0)=0$.

We have proven that $u$ is actually  variational solution of~\eqref{eq:IBVP_u}.
Therefore, by invoking uniqueness of the variational solution of~\eqref{eq:IBVP_u}, we deduce uniqueness of the variational solution of~\eqref{eq:IBVP}. 
We summarize the results above in the following proposition.

\begin{proposition}
Under assumptions~\eqref{eq:WPassumpt}, there exists a unique
variational solution $\vs$, in the sense of \eqref{eq:WPA}, of the IBVP~\eqref{eq:IBVP}, with regularity
as in~\eqref{eq:WPregul}.
\end{proposition}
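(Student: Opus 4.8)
The plan is to reduce the first-order system \eqref{eq:IBVP} to the scalar second-order wave equation \eqref{eq:IBVP_u}, for which the cited classical theory of \cite{DL5} already provides existence, uniqueness, and the regularity \eqref{eq:WPu}. For existence, I would first manufacture an admissible scalar initial datum: since $\bsigma_0\in L^2\OO^2$ gives $\nabla\cdot\bsigma_0\in H^{-1}\OO$, I define $u_0\in H^1_0\OO$ as the unique weak solution of the Poisson problem $-\Delta u_0=\nabla\cdot\bsigma_0$. Feeding $(f,u_0,v_0)$ into the second-order theory yields a solution $u$ with the regularity \eqref{eq:WPu}, and I then set $\vs:=(\partial_t u,\,-\nabla u+\nabla u_0+\bsigma_0)$ as in \eqref{eq:WPregul}. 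The time-independent correction $\nabla u_0+\bsigma_0$ is exactly what lets $\bsigma_0$ be a general (non-gradient) $L^2$ field while preserving $\nabla v+\partial_t\bsigma=\partial_t\nabla u-\partial_t\nabla u=\bzero$; the scalar PDE of \eqref{eq:IBVP} then transcribes the wave equation, and the initial conditions hold by construction, since $\bsigma(\cdot,0)=-\nabla u_0+\nabla u_0+\bsigma_0=\bsigma_0$ and $v(\cdot,0)=\partial_t u(\cdot,0)=v_0$. Membership of $\vs$ in the spaces of \eqref{eq:WPregul} is read off from \eqref{eq:WPu} together with the mapping property $\nabla:L^2\OO\to\Hodiv^*$ recorded above.

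Next I would establish the variational identity \eqref{eq:WPA}. First I derive it for smooth $\vs$ by testing the two PDEs of \eqref{eq:IBVP} against $\wt$ and integrating by parts in space and time; the key point is that the boundary term $\int_{\deO\times(0,T)}v\,\btau\cdot\bn_\Omega^x\di S$ is discarded, which is precisely the weak imposition of $v=0$ on $\GD$. To transfer \eqref{eq:WPA} to the rough solution of regularity \eqref{eq:WPregul}, I invoke the density argument sketched in the footnote: approximate $u$ by $u_j\in C^\infty(\conj Q)$ in the norm of \eqref{eq:WPu}, set $(v_j,\bsigma_j):=(\partial_t u_j,-\nabla u_j)$, and pass to the limit using the continuity of $\calA$ in the topology of \eqref{eq:WPregul}.

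For uniqueness I would use linearity and take $f=0$, $v_0=0$, $\bsigma_0=\bzero$, and aim to show $\vs=0$. The idea is to run the reduction in reverse by defining $u(\cdot,t):=\int_0^t v(\cdot,s)\di s$; time-continuity of $v$ gives $u(\cdot,0)=0$, and integrating the first equation of \eqref{eq:eqhomog} in time (with $\bsigma_0=\bzero$) yields $\nabla u=-\bsigma$, so that $u\in C^0\big([0,T],H^1\OO\big)$. Substituting $v=\partial_t u$ and $\bsigma=-\nabla u$ into the scalar equation of \eqref{eq:eqhomog} then recovers the homogeneous second-order wave equation with vanishing initial data, after which uniqueness for \eqref{eq:IBVP_u} forces $u\equiv 0$, hence $\vs=0$.

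I expect the genuinely delicate step to be showing that this reconstructed $u$ has vanishing Dirichlet trace, i.e.\ that $u(\cdot,t)\in H^1_0\OO$, since the spaces in \eqref{eq:WPregul} do not admit traces of $\vs$ directly. The remedy is to extract this from the weak boundary condition $\int_{\deO\times(0,T)}v\,\btau\cdot\bn_\Omega^x\di S=0$: testing with time-independent $\btau$, writing $v=\partial_t u$, and integrating by parts in time moves the derivative onto $\btau$ and gives $\int_{\deO\times(0,T)}u\,\partial_t\btau\cdot\bn_\Omega^x\di S=0$, whence the trace of $u$ on $\partial\Omega\times(0,T)$ vanishes in $C^0\big([0,T];H^{1/2}(\partial\Omega)\big)$. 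Carefully tracking which duality pairings render these boundary integrals meaningful, and checking that the initial and final traces of $u$ vanish so that the temporal integration by parts leaves no boundary contribution, is the only real technical hurdle; everything else is bookkeeping with the Bochner regularities already in hand.
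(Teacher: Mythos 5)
Your proposal follows the paper's own proof essentially step for step: the same Poisson-lifted $u_0$ and corrected field $(\partial_t u,\,-\nabla u+\nabla u_0+\bsigma_0)$ for existence, the same smooth-case-plus-density derivation of \eqref{eq:WPA}, and the same uniqueness reduction via $u(\cdot,t)=\int_0^t v(\cdot,s)\di s$ ending in the homogeneous second-order problem. The one spot your sketch compresses — the vanishing Dirichlet trace — is carried out in the paper in precisely the two stages you flag: testing with time-independent $\btau$ first forces the trace of $u(\cdot,T)$ to vanish (the $t=0$ trace being zero already), and only then does integration by parts in time, with both endpoint traces gone, yield $\int_{\deO\times(0,T)}u\,\der{\btau}{t}\cdot\bn_\Omega^x\di S=0$ for all admissible $\btau$ and hence the vanishing of the trace of $u$ on $\deO\times(0,T)$.
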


Alternatively, when $\GN=\emptyset$, $f=0$, $\gD=0$, $v_0\in H^1_0\OO$, $\bsigma_0\in \Hdiv$, the well-posedness of IBVP \eqref{eq:IBVP} could be derived from the Lumer--Phillips theorem as in \cite[pp.~238--239]{HPSTW15}, without using the second-order problem.

\section{Solution regularity}\label{s:Regularity}
For the convergence rate analysis of high-order, full-tensor and sparse-tensor 
DG discretizations in the presence of geometric singularities due to 
corners and multimaterial interfaces, we require regularity results in 
corner-weighted spaces in $\Omega$, which we now recapitulate, based on
\cite{LuTu15,MuSc15,MuSc16,MullerPhD,MuScSc18}.

We collect the weight exponents $\delta_i\in[0,1)$ assigned to each $\bc_i\in\calS$, in $\bdelta=\{\delta_i\}_{i=1}^M\in [0,1)^M$ (see Remarks \ref{rmk:deltai=0}, \ref{rmk:deltai>0}) and define the weight function $\Phi_\udelta(\bx):=\prod_{i=1}^M|\bx-\bc_i|^{\delta_i}$. 
We express the regularity of the solution in the spatial domain $\Omega$ 
in terms of weighted spaces $H^{k,\ell}_\udelta\OO$,
which are defined as the completions of 
$C^\infty(\overline{\Omega})$ with respect to the weighted Sobolev 
norms $\N{\cdot}_{H^{k,\ell}_\udelta(\Omega)}$ which, for $1\le \ell \le k\in \IN$, 
are given by
\begin{align*}
\N{u}_{H^{k,\ell}_\udelta(\Omega)}^2 
:=\N{u}_{H^{\ell-1}(\Omega)}^2 +\abs{u}_{H^{k,\ell}_\udelta(\Omega)}^2, 
\qquad
\abs{u}_{H^{k,\ell}_\udelta(\Omega)}^2
:=\sum_{m=\ell}^k \int_\Omega \bigg(\Phi_{\udelta+m-\ell}^2
\hspace{-2mm}\sum_{\substack{\malpha\in\IN_0^2\\\alpha_1+\alpha_2=m}}
|D^\malpha u|^2\bigg)\di\bx.
\end{align*}
We will use these spaces only for $\ell=1$ and $\ell=2$; 
in particular, we will mostly use the following norms and seminorms
\begin{align*}
\N{u}_{H^{1,1}_\udelta(\Omega)}^2 
:= &\N{u}_{L^2(\Omega)}^2+\abs{u}_{H^{1,1}_\udelta(\Omega)}^2,
\quad 
&&\abs{u}_{H^{1,1}_\udelta(\Omega)}^2 
:= 
\N{\Phi_\udelta\nabla u}_{L^2(\Omega)^2}^2,
\\
\N{u}_{H^{2,2}_\udelta(\Omega)}^2
:=&
\N{u}_{L^2(\Omega)}^2+\N{\nabla u}_{L^2(\Omega)^2}^2+\abs{u}_{H^{2,2}_\udelta(\Omega)}^2,
\quad 
&&\abs{u}_{H^{2,2}_\udelta(\Omega)}^2:= 
\N{\Phi_\udelta D^2 u}_{L^2(\Omega)^{2\times2}}^2,
\end{align*}
where $D^2 u$ denotes the Hessian of $u$.

\begin{remark}\label{rmk:deltai=0}
For $\delta_i=0$, $1\le i\le M$, we have $\Phi_\udelta \equiv1$. 
In this case, some of the weighted seminorms reduce to standard ones, i.e.\ $\abs{\circ}_{H^{k,k}_\udelta(\Omega)} = \abs{\circ}_{H^k(\Omega)}$ for $\bdelta=(0,\ldots,0)$ and $k\in \IN$, and $\N{u}_{H^{2,2}_\udelta(\Omega)} = \N{u}_{H^2(\Omega)}$.
\end{remark}

\begin{remark}	\label{rmk:deltai>0}
For a general polygon $\Omega$, admissible values of the parameters $\delta_i\in [0,1)$ depend on the coefficients of the elliptic spatial operator, on the boundary conditions at the sides of $\Omega$ meeting at corner $\bc_i\in\calS$ and on the interior opening angle $\omega_i\in (0,2\pi)$ at $\bc_i$. 
For a homogeneous, isotropic material at a corner $\bc_i$, with either homogeneous Dirichlet or homogeneous Neumann BCs at either side meeting at $\bc_i$, we can take $\delta_i > 1-\pi/\omega_i$. 
At convex corners $\omega_i < \pi$, so that $\delta_i=0$ is admissible.
At corners $\bc_i$ where BCs change type $\delta_i > 1-\pi/(2\omega_i)$. 
Similar conditions are valid for transmission problems with multi-material interface points $\bc \in \Omega$.
At such points, however, $\delta_i$ may take values close to $1$.
We refer to \cite[Section~3.2]{GaspozMorin} for an example; a numerical example is provided in Section \ref{ss:test_1.4} ahead.
\end{remark}

The regularity of the solution $\vs$ of the IBVP \eqref{eq:IBVP} 
follows from the regularity of the solution $u$ of the second-order IBVP \eqref{eq:IBVP_u}, 
by taking time and space derivatives.
For example, if 
$$
v_0,u_0\in C^\infty_0(\Omega),\quad \bsigma_0=-\nabla u_0, \quad f\in C^\infty_0(Q), \quad \gD=\gN=0,
$$
then, by \cite[Cor.~2.6.6]{MullerPhD} (see also \cite[Prop.~2.2]{MuScSc18} for the case with constant $c$), 
there exists $\bdelta\in[0,1)^M$ such that, for all $k_t,k_x\in\IN$, it holds that
\begin{equation}\label{eq:vsRegularity2}
(v,\bsigma)=\Big(\der ut,-\nabla u\Big)\in 
C^{k_t-1}\big([0,T]; H^{k_x+1,2}_\udelta\OO\big)\times 
C^{k_t}\big([0,T]; H^{k_x,1}_\udelta\OO^2\big).
\end{equation} 

\section{Space--time DG discretization: Notation}
\label{s:Mesh}
We prepare notation and conventions which shall be used in the ensuing space--time DG discretization.

\subsection{Temporal, spatial, and space--time meshes}
\label{sec:SapTmpMsh}
Let us introduce a partition $\calTtime$ of the time domain
$(0,T)$ into $N\in\IN$ intervals $I_n$, $1\le n\le N$, with
\[
0=:t_0<t_1<\ldots, t_N:=T,\quad
I_n:=(t_{n-1},t_n),\quad h_n:=|I_n|, \quad \htime:=\max_{1\le n\le N}h_n,
\]
and introduce the following notation for the time slabs and the
partial cylinders, respectively:
$$
D_n:=\Omega\times I_n,\quad
Q_n:=\Omega\times(0,t_n),\quad \text{for }n=1,\ldots,N.
$$

For each $1\le n\le N$, we introduce a polygonal finite element
mesh $\calTspacen=\{K_\bx\}$ of the spatial domain~$\Omega$,
possibly with hanging nodes,
with 
\[
\hspn:=\displaystyle{\max_{K_\bx\in \calTspacen}}h_{K_\bx},\qquad 
h_{K_\bx}:=\diam K_\bx,\qquad \hsp:=\max_{1\le n\le N}h_{\bx,n}.
\]

For each spatial mesh
$\calTspacen$, we assume
{\em i)} shape-regularity, 
{\em ii)} non-degeneracy of faces, namely all element- and
  face-sizes are locally comparable,  
{\em iii)} alignment with the fixed partition $\{\Omega_i\}$ on
which the wave speed $c$ is piecewise constant, and
{\em iv)} that for each $K_\bx\in \calTspacen$, $\partial K_\bx$ contains
at most one element of~$\calS$, i.e.\ one vertex of the partition $\{\Omega_i\}$.

We partition the space--time domain $Q=\Omega\times (0,T)$ with a finite
element mesh $\calT_h$ given by 
\[
\calT_h:=\calT_h(Q):=\{K=K_\bx\times I_n:\ K_\bx\in \calTspacen,\ 1\le n\le N\}.
\]
Note that $\calT_h$ is a tensor product mesh whenever 
$\calTspacen=\calTspace$ for all $1\le n\le N$,
for a given spatial mesh $\calTspace$, that is $\calT_h=\calTspace\times \calTtime$.

As all spatial meshes $\calTspacen$, $1\le n\le N$, are aligned with the 
partition $\{\Omega_i\}$, then the wave speed $c$ is constant in each
element $K$ of the space--time mesh $\calT_h$, and we set $c_K:=c|_K$.
Moreover, if $K=K_\bx\times I_n$ is such that $\partial K_\bx\cap\calS=\{\bc_i\}$ is
non-empty, we denote by $\delta_K=\delta_{\bc_i}\in[0,1)$ the exponent of the space in~\eqref{eq:vsRegularity2}.

For any $1\le n\le N$, we define the time--truncated mesh
\begin{align*}
\calT_h(Q_n):=&\{K\in\calT_h, \;K\subset Q_n\}.
\end{align*}

We partition $\calT_h$ into elements abutting at a corner in $\calS$ and the remaining elements, i.e.\
\begin{align*}
\calT_h^\angle:=&\{K = K_\bx \times I \in \calT_h, \overline{K_\bx}\cap \calS \; \text{nonempty} \},\qquad
\calT_h^\odot:=\calT_h\setminus\calT_h^\angle.
\end{align*}
\subsection{Mesh faces}
\label{sec:Faces}
Each internal face $F=\partial K_1\cap \partial K_2$, 
for $K_1,K_2\in\calT_h$, 
with positive $2$-dimensional measure, 
is a subset of a hyperplane:
$$F\subset\Pi_F:=\big\{(\bx,t)\in \IR^{2+1}:\; \bx\cdot\bn^x_F+t\, n^t_F=a_F\big\},$$ 
where $(\bn^x_F,n^t_F)$ is a unit vector in $\IR^{2+1}$ and $a_F\in\IR$. 
We assume that all internal faces $F$ are either ``space-like'', 
i.e. with $\bn^x_F=\bzero$, or ``time-like'', i.e.\ with $n^t_F=0$.
On space-like faces, by convention, we choose $n^t_F>0$, 
i.e.\ the unit normal vector $(\bn^x_F,n^t_F)$ points forward in time.
Note that all time-like faces are rectangles of the form 
$F=F_\bx\times F_t$ with $h_{F_\bx}=|F_\bx|$ and $h_{F_t}=|F_t|$; 
we recall that $F_t=I_n$ for some $1\le n\le N$.
Finally, we denote the outward-pointing unit normal vector 
on $\deK$ by $(\bn^x_K,n^t_K)$.

We use the following notation for unions and sets of faces:\\
\begin{minipage}{0.62\textwidth}
\begin{align*}
\Fh:=&\bigcup\nolimits_{K\in\calT_h}\partial K \quad \text{(the mesh skeleton)},\\
\Fspa:=& \text{the union of the internal space-like faces,}\\
\Ftime:=& \text{the union of the internal time-like faces,}\\
\FO:=&\Omega\times\{0\},\hspace{12mm} \FT:=\Omega\times\{T\},\\ 
\FD:=&\GD\times[0,T],\qquad
\FN:=\GN\times[0,T],
\\
\IF:=&\{\text{faces of elements of }\calT_h\},\\
\IF^\star:=&\{F\in \IF, F\subset\Fh^\star\}
\qquad&&\hspace{-18mm}\text{for }\star\in\{\mathrm{time,D,N}\},\\
\IF^\star_\angle:=&\{F\in \IF^\star, F\subset\deK \text{ for }K\in\calT_h^\angle\}
&&\hspace{-18mm}\text{for }\star\in\{\mathrm{time, D,N}\},\\
\IF^\star_\odot:=&\IF^\star\setminus\IF^\star_\angle
&&\hspace{-18mm}\text{for }\star\in\{\mathrm{time,D,N}\},\\
\Fh^\angle:=&\text{the union of the faces F}\in \IFtime_\angle\cup\IFD_\angle\cup\IFN_\angle,
\\
\Fh^\odot :=&\text{the union of the faces F}\in \IFtime_\odot\cup\IFD_\odot\cup\IFN_\odot,
\\
\IF^\star_\bullet(\Upsilon):=&\{F\in\IF^\star_\bullet: F\subset \overline{\Upsilon}\},
\end{align*}
\end{minipage}\begin{minipage}{0.37\textwidth}
 \hfill\includegraphics[height = 5.5cm]{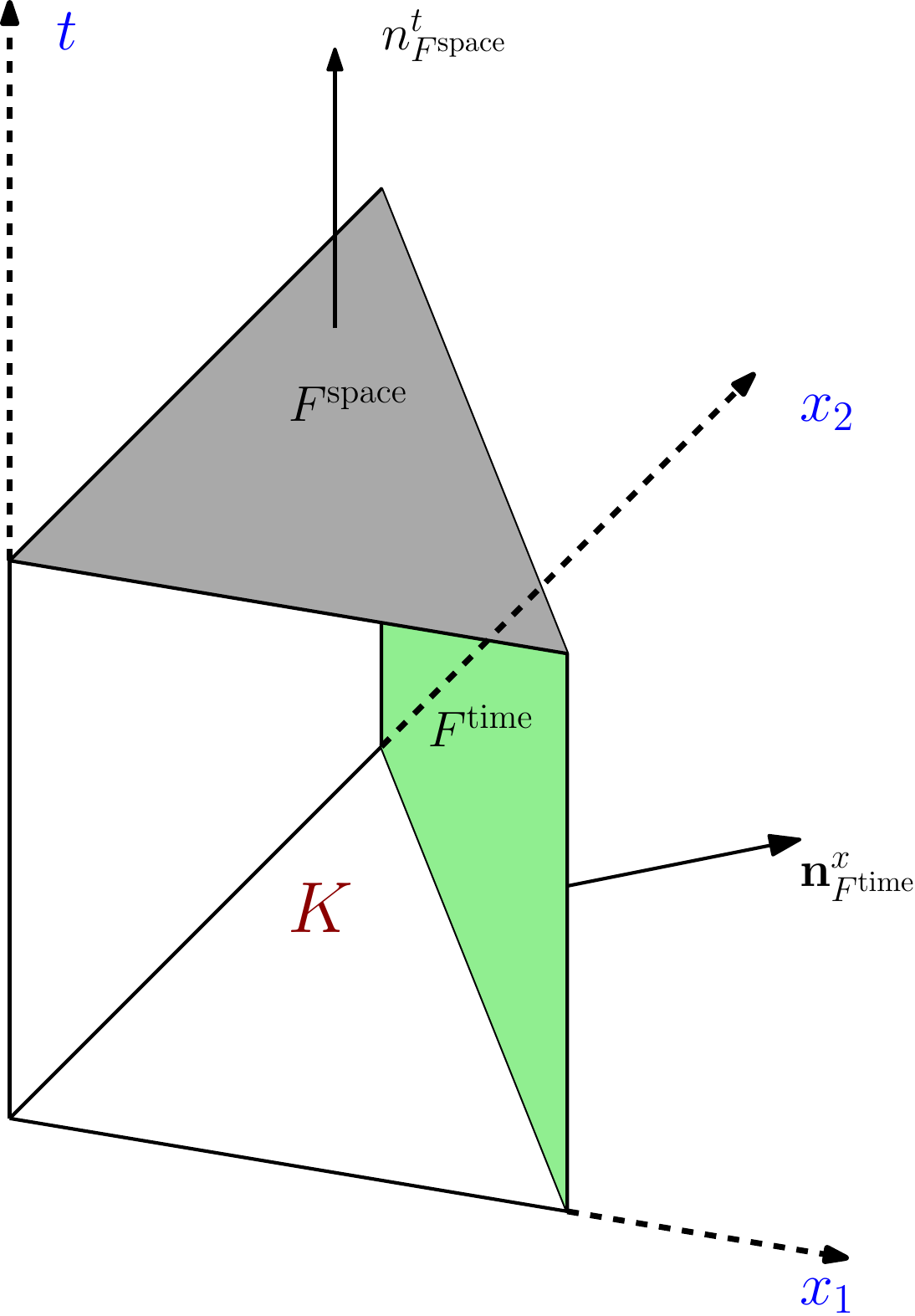}
\small 
\\
Space--time prism element $K$, with  time-like face $F^{\mathrm{time}}$ and
 space-like face $F^{\mathrm{space}}$.
\end{minipage}
\\[2mm]
for all subsets $\Upsilon\subset Q$ and
for all indices $\star,\bullet$ allowed by the previous definitions; 
in particular we will use $\IF^\star_\bullet(Q_n)$ and $\IF^\star_\bullet(K)$, $K\in\calT_h$.
The wave speed $c$ is assumed independent of time, so that
it may jump only across time-like faces in $\Ftime$.

\subsection{Averages and jumps}
\label{sec:JmpsAvg}
In the formulation and error analysis of the DG discretization, we 
require the ``usual'' notation for interelement averages and jumps.
We follow the notation in \cite{MoPe18} in order to refer to results
on space--time DG formulations which were established there.

For piecewise-continuous scalar ($w$) and vector ($\btau$) fields, we
define averages $\mvl\cdot$, space normal jumps $\jmp{\cdot}_\bN$, and
time (full) jumps $\jmp{\cdot}_t$ on internal mesh faces in the standard DG notation:
on $F=\deK_1\cap\deK_2$, $K_1,K_2\in\calT_h$,
\begin{align*}
\mvl{w}&:=\frac{w_{|_{K_1}}+w_{|_{K_2}}}2,\qquad
&\mvl{\btau}&:=\frac{\btau_{|_{K_1}}+\btau_{|_{K_2}}}2, \\
\jmp{w}_\bN&:= w_{|_{K_1}}\bn_{K_1}^x+w_{|_{K_2}} \bn_{K_2}^x,\qquad
&\jmp{\btau}_\bN&:= \btau_{|_{K_1}}\cdot\bn_{K_1}^x+\btau_{|_{K_2}} \cdot\bn_{K_2}^x,\\
\jmp{w}_t&:= w_{|_{K_1}}n_{K_1}^t+w_{|_{K_2}} n_{K_2}^t=(w^--w^+)n^t_F,\quad
&\jmp{\btau}_t&:= \btau_{|_{K_1}} n_{K_1}^t+\btau_{|_{K_2}} n_{K_2}^t= (\btau^--\btau^+)n^t_F.
\end{align*}
Here we have denoted by $w^-$ and $w^+$ the traces of the function $w$
from the adjacent elements at lower and higher times, respectively
(and similarly for $\btau^\pm$). 
The temporal jumps $\jmp{\cdot}_t$ 
are different from zero on space-like faces only.

\subsection{Some properties of \texorpdfstring{$H^{1,1}_\udelta(\Omega)$}{H11-delta} functions}
\label{sec:Smoothness}
We state in our notation some results from~\cite{TWDiss} that are needed in the 
error analysis of the space--time discretization.
\begin{lemma}\label{lem:H11}
If $u\in H^{1,1}_\udelta(\Omega)$ then
\begin{equation}\label{eq:H11trace}
u\in  L^1(F) \qquad\text{and} \qquad \jmp{u}_\bN=0 \text{ a.e. on }  F
\end{equation}
where $F$ is any segment in $\Omega$.
In general, $u\in H^{1,1}_\udelta(\Omega)$ does not guarantee that $u\in L^2(F)$.

Moreover, the embedding $H^{1,1}_\udelta(K_\bx)\subset L^2(K_\bx)$ is compact:
\begin{equation}\label{eq:smooth3}
H^{1,1}_\udelta(K_\bx)\subset\subset  L^2(K_\bx)
\end{equation}
\end{lemma}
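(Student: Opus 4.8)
The plan is to reduce all three assertions to the single observation that $\Phi_\udelta^{-1}\in L^2(\Omega)$. Since the points $\bc_i\in\calS$ are distinct, $\Phi_\udelta^{-1}$ is singular only near each $\bc_i$, where in polar coordinates $r=|\bx-\bc_i|$ the squared integrand behaves like $r^{-2\delta_i}$; because $\delta_i<1$ we have $\int_0^\rho r^{1-2\delta_i}\di r<\infty$, so $\Phi_\udelta^{-1}\in L^2(\Omega)$. Writing $\nabla u=\Phi_\udelta^{-1}\,(\Phi_\udelta\nabla u)$ and applying the Cauchy--Schwarz inequality then gives $\N{\nabla u}_{L^1(\Omega)^2}\le\N{\Phi_\udelta^{-1}}_{L^2(\Omega)}\,\abs{u}_{H^{1,1}_\udelta(\Omega)}<\infty$, while $u\in L^2(\Omega)\subset L^1(\Omega)$; hence $u\in W^{1,1}(\Omega)$. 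The first assertion \eqref{eq:H11trace} is then immediate from standard $W^{1,1}$-trace theory on the Lipschitz domain $\Omega$: a genuine (single-valued) $W^{1,1}$ function has an $L^1$ trace on any interior Lipschitz segment $F$, and the traces from the two sides of $F$ coincide, which is exactly $\jmp{u}_\bN=0$ a.e.\ on $F$.

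For the negative statement that $u\in L^2(F)$ cannot be guaranteed, I would exhibit an explicit counterexample at a corner $\bc_i$ for which $\delta_i\in(\tfrac12,1)$ is admissible (such corners occur, e.g., at reentrant corners or at multi-material interface points, cf.\ Remark~\ref{rmk:deltai>0}). Taking a smooth cutoff $\eta$ equal to $1$ near $\bc_i$ and supported away from the other corners, set $u(\bx):=\eta(\bx)\,|\bx-\bc_i|^{-1/2}$. A direct computation in polar coordinates shows $u\in L^2(\Omega)$ (since $r^{-1}\cdot r$ is integrable near $\bc_i$) and $\Phi_\udelta\nabla u\in L^2(\Omega)^2$ (since $|\nabla u|\sim r^{-3/2}$ and $\int_0^\rho r^{2\delta_i-2}\di r<\infty$ precisely when $\delta_i>\tfrac12$), so $u\in H^{1,1}_\udelta(\Omega)$; yet along a segment $F$ through $\bc_i$ one has $u|_F\sim|s|^{-1/2}$, whose square $|s|^{-1}$ is not integrable, so $u\notin L^2(F)$.

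For the compact embedding \eqref{eq:smooth3}, I would upgrade the $W^{1,1}$ argument to a genuinely subcritical one. On a single element $K_\bx$, at most one $\bc_i$ lies on $\overline{K_\bx}$, so $\Phi_\udelta^{-1}\sim|\bx-\bc_i|^{-\delta_i}$ up to bounded factors, and $\Phi_\udelta^{-1}\in L^p(K_\bx)$ whenever $p\delta_i<2$; since $\delta_i<1$ we may fix some $p>2$. Hölder's inequality applied to $\nabla u=\Phi_\udelta^{-1}(\Phi_\udelta\nabla u)$ then yields $\nabla u\in L^q(K_\bx)$ with $\tfrac1q=\tfrac1p+\tfrac12<1$, i.e.\ some $q\in(1,2)$, and the resulting bound is continuous from $H^{1,1}_\udelta(K_\bx)$ into $W^{1,q}(K_\bx)$. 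Since $q>1$, the Sobolev conjugate $q^\ast=2q/(2-q)>2$, so the Rellich--Kondrachov theorem gives the compact embedding $W^{1,q}(K_\bx)\subset\subset L^2(K_\bx)$; composing the continuous embedding with this compact one proves \eqref{eq:smooth3}. When $\delta_i=0$ this degenerates to the classical Rellich theorem for $W^{1,2}$.

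The routine parts here are the polar-coordinate integrability checks; the one point requiring genuine care is the first assertion, where I must ensure that the $L^1$ trace of a \emph{single} $W^{1,1}(\Omega)$ function on an interior segment is unambiguous and two-sided consistent — this is precisely what makes $\jmp{u}_\bN=0$ hold, and it hinges on $u$ being an honest Sobolev function on all of $\Omega$ rather than merely a piecewise one. I would therefore take care to derive global (not merely local) membership $u\in W^{1,1}(\Omega)$ before invoking trace theory.
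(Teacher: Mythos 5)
Your proof is correct, but it takes a genuinely different route from the paper: the paper does not prove Lemma~\ref{lem:H11} at all, it simply cites the corresponding statements in Wihler's thesis (\cite[Lemma 1.3.2, Remark 1.3.3, Lemma 1.3.4, Proposition A.2.5]{TWDiss}), whereas you give a self-contained argument by embedding the weighted space into unweighted Sobolev spaces. Your three reductions are sound: $\Phi_\udelta^{-1}\in L^2(\Omega)$ for $\bdelta\in[0,1)^M$ gives $H^{1,1}_\udelta(\Omega)\hookrightarrow W^{1,1}(\Omega)$ and hence two-sided $L^1$ traces with vanishing jumps; the function $\eta\,|\bx-\bc_i|^{-1/2}$ with $\delta_i\in(\tfrac12,1)$ is a valid counterexample for the $L^2(F)$ failure; and on a single element, $\Phi_\udelta^{-1}\in L^p(K_\bx)$ for some $p>2$ upgrades the embedding to $H^{1,1}_\udelta(K_\bx)\hookrightarrow W^{1,q}(K_\bx)$ with $q\in(1,2)$ (indeed $q^\ast=p>2$), so Rellich--Kondrachov yields \eqref{eq:smooth3}. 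What your approach buys is transparency about the thresholds: it shows exactly where $\delta_i<1$ enters (integrability of $\Phi_\udelta^{-2}$) and why $\delta_i>\tfrac12$ is the regime where $L^2$ traces fail; what the citation buys is brevity and avoidance of one technical point you leave implicit. That point is the only caveat worth fixing: since $H^{1,1}_\udelta$ is \emph{defined} as a completion of $C^\infty(\overline\Omega)$, your estimates should be stated for smooth functions and extended by density (which is immediate for the two embeddings), and for the counterexample you must verify that $\eta\,r^{-1/2}$ actually lies in the completion, not merely that its weighted norm is finite. A short truncation argument does this: with $u_n:=\min(u,n)$ one has $\N{\Phi_\udelta(\nabla u-\nabla u_n)}_{L^2}^2\lesssim\int_0^{n^{-2}}r^{2\delta_i-2}\di r\to0$ precisely because $\delta_i>\tfrac12$, and the Lipschitz functions $u_n$ can then be mollified; adding this sentence closes the gap.
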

\begin{proof}
See \cite[Lemma 1.3.2 and Remark 1.3.3]{TWDiss},
\cite[Lemma~1.3.4]{TWDiss}, 
and \cite[Proposition~A.2.5]{TWDiss}, respectively.
\end{proof}

If the IBVP solution $\vs$ satisfies the regularity condition \eqref{eq:vsRegularity2}, then $\bsigma$ has poor regularity in space, so we expect its traces on mesh edges close to the domain corners at a fixed time to be in $L^1$ but not in $L^2$. 
This is relevant, e.g., to define the DG skeleton seminorms in \S\ref{s:AbstractAnalysis}.
On the other hand, in the analysis we make use of the vanishing of the jumps of $\bsigma$ on these edges.

\subsection{Inverse inequalities}
\label{sec:InvIneq}
As the ensuing error analysis will be an ``$h$-version'' analysis, where the 
polynomial degrees are assumed fixed and convergence is achieved by 
suitable mesh refinement, we will require inverse inequalities 
in order to provide bounds on some face terms.

For any time-like face $F=F_\bx\times F_t\in\IFtime\cup\IFD$,
\begin{equation}\label{eq:L1L2Loo}
\N{\varphi}_{L^1(F_\bx)}\le h_{F_\bx}^{1/2}\N{\varphi}_{L^2(F_\bx)}
\quad \forall \varphi \in L^2(F_\bx),
\qquad
\N{\varphi}_{L^2(F_\bx)}\le h_{F_\bx}^{1/2}\N{\varphi}_{L^\infty(F_\bx)}
\quad \forall \varphi \in L^\infty(F_\bx).
\end{equation}
For polynomial functions, the following inverse inequalities hold:
\begin{equation}\label{eq:inverseL1L2Loo}
\N{P}_{L^2(F_\bx)}\le C_{2|1}h_{F_\bx}^{-1/2}\N{P}_{L^1(F_\bx)},\qquad
\N{P}_{L^\infty(F_\bx)}\le C_{\infty|2}h_{F_\bx}^{-1/2}\N{P}_{L^2(F_\bx)}
\quad \forall P \in \IP^p(F_\bx),
\end{equation}
where $\IP^p(F_\bx)$ denotes the space of polynomials of degree at most 
$p\in\IN_0$ on $F_\bx$, and the constants $ C_{2|1},C_{\infty|2}$
depend on $p$. 
The inequalities in~\eqref{eq:L1L2Loo} are obvious, while the inverse inequalities in~\eqref{eq:inverseL1L2Loo} follow from the equivalence of norms in finite dimensional spaces and scaling; note that $C_{\infty|2}\ge1$.
The constants in \eqref{eq:inverseL1L2Loo} depend linearly on the polynomial degree.
Denote by $P_p$ the Legendre polynomials and by $Q_p=\sqrt{p+\frac12}P_p$ their scalings which are \emph{orthonormal} in $L^2(-1,1)$ and which 
satisfy $\N{Q_p}_{L^\infty(-1,1)}=\sqrt{p+\frac12}$.
Then, for any polynomial $v=\sum_{j=0}^pa_jQ_j\in \IP^p(-1,1)$, 
\begin{align*}
\N{v}_{L^\infty(-1,1)}
&\le\sum_{j=0}^p |a_j|\N{Q_j}_{L^\infty(-1,1)}
\le \Big(\sum_{j=0}^p|a_j|^2\Big) ^{\frac12}
\Big(\sum_{j=0}^p (j+1/2)\Big)^{\frac12}
=\N{v}_{L^2(-1,1)}\frac{p+1}{\sqrt2},
\\
\N{v}_{L^2(-1,1)}^2
&=\sum_{j=0}^p|a_j|^2
=\sum_{j=0}^p\Big(\int_{-1}^1v\,Q_j\Big)^2\le
\N{v}_{L^1(-1,1)}^2 \sum_{j=0}^p\N{Q_j}^{2}_{L^\infty(-1,1)}
=\N{v}_{L^1(-1,1)}^2\frac{(p+1)^2}{2}.
\end{align*}
By scaling from $(-1,1)$ to a general interval we obtain \eqref{eq:inverseL1L2Loo} with
\begin{equation}\label{eq:InvIneqC}
C_{2|1}\le p+1,\qquad C_{\infty|2}\le  p+1
\qquad p\in\IN_0.
\end{equation}

The inverse inequalities \eqref{eq:inverseL1L2Loo} involve 
spaces of single-variable polynomials defined on segments; 
in the error analysis carried out in the next sections, 
we do not use inverse inequalities for spaces defined on polygons or polyhedra, so the shapes of the mesh elements do not play any role in this respect.

\section{Space--time DG discretization: Formulation}
\label{s:DG}
With the notation introduced in the preceding section,
we now derive the space--time DG discretization of 
the IBVP~\eqref{eq:IBVP}. 
We prove existence and uniqueness of discrete solutions, 
as well as abstract error estimates for these.

Throughout this section we assume the following regularity 
for the data and for the solutions $(v,\bsigma)$ of the IBVP~\eqref{eq:IBVP}:
\begin{align*}
&f\in L^2(Q), 
\qquad v_0\in H^{2,2}_\udelta\OO, 
\qquad \bsigma_0\in H^{1,1}_\udelta\OO^2,
\qquad \gD\in L^2\big(0,T;H^\half(\GD)\big),
\quad \gN\in L^2\big(0,T;L^2(\GN)\big),
\\
&\vs\in H^1\big([0,T]; H^{2,2}_\udelta\OO\big)\times H^1\big([0,T]; H^{1,1}_\udelta\OO^2\big).
\end{align*}

\subsection{Derivation of the space--time discrete formulation}
\label{s:Derivation}
We rewrite the formulation of \cite{MoPe18} in the non-Trefftz case.
This is a special case of the formulation of \cite{MoRi05} for general hyperbolic first-order systems.

To derive the DG formulation, we multiply the firstly two equations of \eqref{eq:IBVP} 
with test fields $\btau$ and $w$ and integrate by parts on a single mesh element $K\in\calT_h$:
\begin{align}
\label{eq:Elemental0}
&-\int_K\bigg(v
\Big(\nabla\cdot\btau+c^{-2}\der wt \Big) 
+\bsigma\cdot\Big(\nabla w +\der\btau t\Big)\bigg)\di V
\\
&\qquad+\int_{\partial K} \bigg((v\,\btau + \bsigma\, w)\cdot\bn_K^x
+\Big(\bsigma\cdot\btau + c^{-2} v\,w\Big)\,n_K^t\bigg)\di S
=\int_K fw\di V;
\nonumber
\end{align}
we recall that the wave speed $c$ is assumed to be 
constant in each $K\in \calT_h$.
We seek a discrete solution $(v\hp,\bsigma\hp)$ approximating
$(v,\bsigma)$ in a finite-dimensional 
(arbitrary, at this stage) space $\bVp \Th$. 
We take the test field $(w,\btau)$ in the same space $\bVp\Th$.
The traces of $v\hp$ and $\bsigma\hp$ on the mesh skeleton 
are approximated by the (single-valued) \emph{numerical fluxes} $\hVhp$ and $\hShp$, 
so that \eqref{eq:Elemental0} is rewritten as:
\begin{align}\label{eq:Elemental1}
&-\int_K\bigg(v\hp\Big(
\nabla\cdot\btau\hp+c^{-2}\der{w\hp} t \Big) 
+\bsigma\hp\cdot\Big(\nabla w\hp+\der{\btau\hp} t\Big)\bigg)\di V
\\\nonumber
&+\int_{\partial K} 
\bigg(\hVhp\Big(\btau\hp \cdot\bn_K^x+\frac{w\hp}{c^2} n_K^t\Big)
+\hShp\cdot\big(w\hp\bn_K^x+\btau\hp\,n_K^t\big)\bigg)
\di S=\int_K fw\di V.
\end{align}
We choose to define the numerical fluxes as:
\begin{align*}
\hVhp :=\begin{cases}
\Vhp^- \\
\Vhp \\
v_0\\ 
\mvl{\Vhp}+\beta \jmp{\Shp}_\bN \\
\gD\\
v\hp+\beta(\bsigma\hp\cdot\bn_\Omega^x-\gN)\\
\end{cases}
\hShp:=
\begin{cases}
\Shp^- & \oon \Fspa,\\
\Shp &\oon \FT,\\
\bsigma_0 &\oon \FO,\\
\mvl{\Shp}+\alpha \jmp{\Vhp}_\bN \hspace{-2mm}&\oon \Ftime,\\
\Shp+\alpha(v\hp-\gD)\bn_\Omega^x\hspace{-2mm}&\oon \FD,\\
\gN\bn_\Omega^x
&\oon \FN.
\end{cases}
\end{align*}
The stabilization parameters $\alpha\in L^\infty(\Ftime\cup\FD)$ and $\beta\in
L^\infty(\Ftime\cup\FN)$ will be chosen depending on the mesh,
but constant on each time-like face; 
see Corollary~\ref{cor:smooth} and \S\ref{sec:RefMesXDom} below.

These fluxes are \emph{consistent}, in the sense that they coincide
with the traces of the exact solution $(v,\bsigma)$ of the IBVP
\eqref{eq:IBVP} if they are applied to $(v,\bsigma)$ itself, which
satisfies the boundary conditions and has no jumps across mesh faces
by the $C^0([0,T];H^{1,1}_\udelta\OO)$ regularity of all components of
the solution and the property~\eqref{eq:H11trace} (see \cite[Lemma~1.3.4]{TWDiss}).
The numerical fluxes can be understood as upwind fluxes
in the usual sense on the space-like faces, and as standard DG-elliptic fluxes with jump penalisation on the time-like ones.

By summing the elemental DG equation \eqref{eq:Elemental1} over the elements $K\in \calT_h$, with the fluxes defined above, 
we obtain the space--time DG variational formulation:
\begin{align}\label{eq:DG}
\text{Find}&\; (\Vhp,\Shp)\in \bVp (\calT_h) \text{ such that }\;
\calA\DG(\Vhp,\Shp; w ,\btau )=\ell( w ,\btau )\quad 
\forall ( w ,\btau )\in \bVp (\calT_h), \text{ where}
\nonumber\\
\calA\DG(&v,\bsigma; w ,\btau ):=
-\sum_{K\in\calT_h}\int_K\bigg(v\Big(
\nabla\cdot\btau+c^{-2}\der{w} t \Big) +\bsigma\cdot\Big(\nabla w+
\der{\btau} t\Big)\bigg)\di V
\\\nonumber
&+\int_{\Fspa}\big(c^{-2}v^-\jmp{w}_t+\bsigma^-\cdot\jmp{\btau}_t\big)\di\bx
+\int_\FT (c^{-2}v  w +\bsigma \cdot\btau )\di \bx
\\
&+\int_\Ftime \big( \mvl{v}\jmp{\btau }_\bN+\mvl{\bsigma}\cdot\jmp{ w }_\bN
+\alpha\jmp{v}_\bN\cdot\jmp{ w }_\bN+ \beta\jmp{\bsigma}_\bN\jmp{\btau }_\bN\big)\di S
\nonumber\\
&+\int_\FD \big(\bsigma\cdot\bn_\Omega^x\, w +\alpha v w   \big) \di S+\int_\FN\big(v(\btau\cdot\bn_\Omega^x)
+\beta(\bsigma\cdot\bn_\Omega^x)(\btau\cdot\bn_\Omega^x)\big)\di S,
\nonumber\\
\ell( &w ,\btau ):=
\int_Q fw\di V
+\int_\FO ( c^{-2}v_0 w  +\bsigma_0\cdot \btau )\di \bx
\nonumber\\&
+\int_\FD \gD\big(\alpha  w -\btau\cdot\bn_\Omega^x\big)\di S
+\int_\FN \gN \big(\beta\,\btau\cdot\bn_\Omega^x-w\big)\di S.
\nonumber
\end{align}

\subsection{Existence and uniqueness of the discrete solution}\label{s:Uniqueness}

We prove existence and uniqueness of the DG solution under the assumption 
that all $(w\hp,\btau\hp)\in\bVp\Th$ are elementwise polynomials
(at this stage elementwise analytic suffices), 
and that we have
\begin{equation}\label{eq:dVpAssumpt}
\Big(\nabla\cdot\btau\hp+c^{-2}\der{w\hp} t,\nabla w\hp+\der{\btau\hp} t\Big)\in\bVp\Th \qquad \forall \wth\in\bVp\Th;
\end{equation}
recall that $c$ is piecewise constant in $\calT_h$.
We define the following DG seminorm:
\begin{align}\nonumber
\abs{(w ,\btau )}^2\DG&:=
\frac12 \Norm{c^{-1}\jmp{w }_t}_{L^2(\Fspa)}^2
+\frac12\Norm{\jmp{\btau }_t\rule{0pt}{3mm}}_{L^2(\Fspa)^2}^2 
+\frac12\Norm{c^{-1}w }^2_{L^2(\FO\cup\FT)} 
+\frac12\Norm{\btau \rule{0pt}{3mm}}^2_{L^2(\FO\cup\FT)^2} 
\\&\quad
+\Norm{\alpha^{1/2}\jmp{w }_\bN}_{L^2(\Ftime)^2}^2
+\Norm{\beta^{1/2}\jmp{\btau }_\bN}_{L^2(\Ftime)}^2
+\Norm{\alpha^{1/2}w }_{L^2(\FD)}^2
+\Norm{\beta^{1/2} \btau\cdot\bn_\Omega^x }_{L^2(\FN)}^2.
\label{eq:DGseminorm}
\end{align}

\begin{proposition}\label{prop:uniqueness}
Under assumption~\eqref{eq:dVpAssumpt}, the solution of the space--time DG
formulation~\eqref{eq:DG} exists and is unique.
\end{proposition}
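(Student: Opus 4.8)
The plan is to exploit that the test and trial space $\bVp(\calT_h)$ is finite-dimensional, so that the square linear system \eqref{eq:DG} is uniquely solvable as soon as its homogeneous counterpart (with $\ell\equiv0$, equivalently with all data set to zero) admits only the trivial solution. Thus I would fix a discrete solution $(\Vhp,\Shp)$ of \eqref{eq:DG} with $\ell\equiv0$ and aim to prove $(\Vhp,\Shp)=(0,\bzero)$; existence then follows from uniqueness by dimension count.

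The cornerstone is the \emph{coercivity identity}
\[
\calA\DG(\Vhp,\Shp;\Vhp,\Shp)=\abs{(\Vhp,\Shp)}^2\DG,
\]
valid for every $(\Vhp,\Shp)\in\bVp(\calT_h)$. To obtain it I would evaluate $\calA\DG$ on the diagonal and integrate its volume term by parts element by element, which is legitimate because the discrete fields are elementwise polynomial. The key structural fact is that on the diagonal the volume integrand is an exact space--time divergence,
\[
\Vhp\big(\nabla\cdot\Shp+c^{-2}\partial_t\Vhp\big)+\Shp\cdot\big(\nabla\Vhp+\partial_t\Shp\big)
=\nabla\cdot(\Vhp\Shp)+\tfrac12\partial_t\big(c^{-2}\Vhp^2+\abs{\Shp}^2\big),
\]
so that the volume term collapses to face integrals of the energy density. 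I would then match these element-boundary contributions against the explicit flux terms of $\calA\DG$, face type by face type: on space-like faces the upwind choice combines with the energy jump through $\tfrac12(a^2-b^2)-b(a-b)=\tfrac12(a-b)^2$ to produce the squared temporal jumps; on time-like faces the consistency terms $\mvl{\Vhp}\jmp{\Shp}_\bN+\mvl{\Shp}\cdot\jmp{\Vhp}_\bN$ cancel the integration-by-parts remainders, leaving only the penalty contributions; and the $\FO$, $\FT$, $\FD$, $\FN$ terms assemble into the remaining pieces of \eqref{eq:DGseminorm}. \textbf{I expect this bookkeeping --- tracking signs and the $\mvl{\cdot}$/$\jmp{\cdot}$ algebra so that everything reduces \emph{exactly} to the seminorm \eqref{eq:DGseminorm} --- to be the main obstacle}, since it is the only genuinely computational rather than structural step.

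With the identity in hand, testing the homogeneous equation against its own solution yields $\abs{(\Vhp,\Shp)}\DG=0$. Reading off \eqref{eq:DGseminorm}, this forces all interelement jumps of $(\Vhp,\Shp)$ to vanish (so the discrete solution is globally continuous on $Q$), the traces on $\FO\cup\FT$ to vanish, and the homogeneous conditions on $\FD$ and $\FN$ to hold. The vanishing seminorm alone does not yet give $(\Vhp,\Shp)=(0,\bzero)$, and this is precisely where assumption \eqref{eq:dVpAssumpt} enters: I would test the homogeneous equation with $(w\hp,\btau\hp)=\big(\nabla\cdot\Shp+c^{-2}\partial_t\Vhp,\ \nabla\Vhp+\partial_t\Shp\big)$, which belongs to $\bVp(\calT_h)$ by \eqref{eq:dVpAssumpt}. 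Since $(\Vhp,\Shp)$ is now jump-free and satisfies the homogeneous end-time and boundary conditions, integrating the volume term by parts the other way makes all face contributions cancel, leaving
\[
\sum_{K\in\calT_h}\int_K\Big(\abs{\nabla\Vhp+\partial_t\Shp}^2+\abs{\nabla\cdot\Shp+c^{-2}\partial_t\Vhp}^2\Big)\di V=0,
\]
so that the first-order system holds \emph{strongly} in each element.

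Finally, $(\Vhp,\Shp)$ is a piecewise-polynomial, globally continuous strong solution of the homogeneous IBVP with vanishing initial data (the $\FO$ part of the seminorm). A standard energy argument then closes the proof: differentiating $E(t)=\tfrac12\int_\Omega\big(c^{-2}\Vhp^2+\abs{\Shp}^2\big)\di\bx$ in time, substituting the two strong equations, and using the continuity across time-like faces together with the homogeneous boundary conditions to reduce $E'(t)$ to a boundary integral that vanishes, gives $E'\equiv0$; combined with $E(0)=0$ this yields $E\equiv0$, whence $(\Vhp,\Shp)=(0,\bzero)$, and uniqueness --- hence existence --- follows.
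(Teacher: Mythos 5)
Your proposal follows the paper's argument almost verbatim in its core: the coercivity identity $\calA\DG(v\hp,\bsigma\hp;v\hp,\bsigma\hp)=\abs{(v\hp,\bsigma\hp)}\DG^2$ obtained by elementwise integration by parts, the conclusion that all jumps and traces appearing in \eqref{eq:DGseminorm} vanish for a solution with homogeneous data, and then --- exactly as in the paper --- the use of assumption \eqref{eq:dVpAssumpt} to test with $(\nabla\cdot\bsigma\hp+c^{-2}\partial_t v\hp,\,\nabla v\hp+\partial_t\bsigma\hp)$ and deduce that the first-order system holds strongly in each element. The only place you diverge is the endgame: the paper notes that, in each element of the first time slab, a polynomial (or elementwise analytic) solution of the homogeneous PDE with zero initial trace must vanish, and then iterates slab by slab using the vanishing temporal jumps; you instead run a global energy argument, using the continuity of $v\hp$, the normal continuity of $\bsigma\hp$, and the homogeneous boundary conditions (all consequences of the vanishing seminorm) to get $E'(t)\equiv 0$ and $E(0)=0$. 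Both endings are valid; yours is the more standard PDE-style closure and avoids invoking the Taylor-expansion structure of polynomials, while the paper's slab-by-slab propagation is a purely local argument. One small imprecision on your side: vanishing of $\jmp{\bsigma\hp}_\bN$ on $\Ftime$ gives only \emph{normal} continuity of $\bsigma\hp$, not full continuity, so ``globally continuous on $Q$'' is an overstatement --- but normal continuity is precisely what your divergence-theorem cancellation needs, so the argument is unaffected.
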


\begin{proof}
We specialise the proof of \cite[Lemma~4.4]{MoRi05} to our setting.
We start by rewriting $\calA\DG$ in~\eqref{eq:DG} in the following
equivalent form, which is obtained by integration by parts in each element: 
\begin{align}\label{eq:Aibp}
\calA\DG(&v,\bsigma; w ,\btau )=
\sum_{K\in\calT_h}\int_K\bigg(\Big(
\nabla\cdot\bsigma+c^{-2}\der{v} t \Big)w +\Big(\nabla v+
\der{\bsigma} t\Big)\cdot\btau\bigg)\di V
\\\nonumber
&-\int_{\Fspa}\big(c^{-2}\jmp{v}_tw^+ +\jmp{\bsigma}_t\cdot \btau^+ \big)\di\bx
+\int_\FO (c^{-2}v  w +\bsigma \cdot\btau )\di \bx
\nonumber\\
&+\int_\Ftime \big(-\jmp{v}_\bN\cdot\mvl{\btau }-\jmp{\bsigma}_\bN\mvl{w}
+\alpha\jmp{v}_\bN\cdot\jmp{ w }_\bN+ \beta\jmp{\bsigma}_\bN\jmp{\btau }_\bN\big)\di S
\nonumber\\
&+\int_\FD \big(-v\btau\cdot\bn_\Omega^x +\alpha v w   \big) \di S
+\int_\FN\big(-\bsigma\cdot\bn_\Omega^x w
+\beta(\bsigma\cdot\bn_\Omega^x)(\btau\cdot\bn_\Omega^x)\big)\di S.
\nonumber
\end{align}

By taking $w=v$ and $\btau=\bsigma$, and summing the two expressions
for $\calA\DG$ given in~\eqref{eq:DG} and~\eqref{eq:Aibp}, we obtain
\begin{align}\label{eq:A=norm}
\calA\DG(&v,\bsigma;v,\bsigma)=\abs{(v,\bsigma)}\DG^2,
\end{align}
for the DG seminorm defined in \eqref{eq:DGseminorm}.

In order to prove uniquess of the discrete solution
of~\eqref{eq:DG}, due to the linearity of the problem, 
it is enough to consider the data $f=u_0=u_1=\gD=\gN=0$ so that
$\ell(w\hp,\btau\hp)=0$ for all $(w\hp,\btau\hp)\in\bVp\Th$, and 
prove that $(v\hp,\bsigma\hp)$ is zero in $Q$.

Choosing $(w\hp,\btau\hp)=(v\hp,\bsigma\hp)$, from \eqref{eq:A=norm} we deduce $\abs{(v\hp,\bsigma\hp)}\DG=0$, 
i.e.\ all jumps and traces present in the definition of the DG seminorm vanish.
Then \eqref{eq:Aibp} becomes
\begin{align*}
\calA\DG(v\hp,\bsigma\hp; w\hp ,\btau\hp )=
\sum_{K\in\calT_h}\int_K\bigg(\Big(
\nabla\cdot\bsigma\hp+c^{-2}\der{v\hp} t \Big)w\hp +\Big(\nabla v\hp+
\der{\bsigma\hp} t\Big)\cdot\btau\hp&\bigg)\di V=0\\
&
\quad \forall (w\hp,\btau\hp)\in \bVp\Th.
\end{align*}
Choosing $(w\hp,\btau\hp)=(\nabla\cdot\bsigma\hp+c^{-2}\der{v\hp} t,\nabla v\hp+\der{\bsigma\hp} t)\in\bVp\Th$, which is possible by \eqref{eq:dVpAssumpt}, we deduce that for all $K\in\calT_h$
\begin{equation}\label{eq:localPDE}
\nabla\cdot\bsigma\hp+c^{-2}\der{v\hp} t=0,\quad
\nabla v\hp+\der{\bsigma\hp} t=\bzero,
\end{equation}
i.e.\ $(v\hp,\bsigma\hp)$ is solution of the homogeneous PDE in each element.

In each $K\subset D_1$, the first time-slab, $(v\hp,\bsigma\hp)$ is a
polynomial solution (or an analytical solution) of~\eqref{eq:localPDE} with homogeneous initial conditions, so it vanishes in $K$.
Iterating over the time slabs and using that the jumps across
space-like faces vanish, we deduce that $(v\hp,\bsigma\hp)$ is zero in
the whole space--time cylinder $Q$. 

Existence of the solution follows from linearity and finite dimensionality.
\end{proof}

If the wave speed $c$ were a general (piecewise) smooth function of $\bx$, the condition \eqref{eq:dVpAssumpt} would not be satisfied.
Condition \eqref{eq:dVpAssumpt} is used in the proof of Proposition~\ref{prop:uniqueness} to show \eqref{eq:localPDE}, i.e.\ to ensure that the solution of the homogeneous variational problem are elementwise solution of the PDE.
This prevents the immediate extension of the current analysis to the case of general coefficients.

\begin{remark}\label{rem:stab}
Whenever the problem is driven by initial conditions only (namely,
$f=0$ and the boundary conditions on $\FD$ and $\FN$, if non empty,
are zero), the solution $(v\hp,\bsigma\hp)$
of the space--time DG
formulation~\eqref{eq:DG} satisfies
\[
\abs{(v\hp,\bsigma\hp)}\DG^2=\calA\DG(v\hp,\bsigma\hp;
v\hp,\bsigma\hp)=\ell(v\hp,\bsigma\hp)\le
\left(2\Norm{c^{-1}v_0}_{L^2(\FO)}^2
  +2\Norm{\bsigma_0}_{L^2(\FO)^2}^2
\right)^{1/2}
\abs{(v\hp,\bsigma\hp)}\DG,
\]
and therefore
\[\abs{(v\hp,\bsigma\hp)}\DG\le  \left(2\Norm{c^{-1}v_0}_{L^2(\FO)}^2 +2\Norm{\bsigma_0}_{L^2(\FO)^2}^2 \right)^{1/2}. \]
\end{remark}

\begin{remark}\label{rem:seminorm}
We stress that $\abs{\cdot}\DG$ defined in~\eqref{eq:DGseminorm} is only a seminorm 
in usual broken Sobolev spaces and DG discrete spaces.
In Trefftz discretization spaces, $\abs{\cdot}\DG$ is
actually a norm. Therefore, in the analysis of the Trefftz-DG method
in~\cite{MoPe18}, well-posedness and
quasi-optimality were straightforward consequences of the Lax--Milgram theorem. 
\end{remark}

\subsection{Abstract error estimates at discrete times} \label{s:AbstractAnalysis}

We firstly prove error bounds in the seminorm $\abs{\cdot}\DG$ restricted to the partial cylinder $Q_n$.
We denote this truncated seminorm 
$\abs{(w,\btau)}\DGQn:=\abs{(w\chi_{Q_n},\btau\chi_{Q_n})}\DG$, 
where $\chi_{Q_n}$ is the characteristic function of $Q_n$.
Similarly we define the truncated bilinear form
$\calA\DGQn(v,\bsigma;w,\btau):=\calA(v\chi_{Q_n},\bsigma\chi_{Q_n};w\chi_{Q_n},\btau\chi_{Q_n})$.

As $\abs{(w,\btau)}\DGQn^2$ contains the terms
$\N{c^{-1}w}_{L^2(\Omega\times\{t_n\})}^2$ and
$\N{\btau}_{L^2(\Omega\times\{t_n\})^2}^2$,
error bounds in the seminorm $\abs{\cdot}\DGQn$ imply
bounds on the $L^2$ norm of the error of the trace 
(as opposed to the time-jumps only) on the space-like interfaces
$\Ftn:=\Omega\times\{t_n\}$, namely a control of spatial volume integrals of the error in $v$ and in $\bsigma$ at each discrete time $t_n$.

Let $\vs,\vsh,\wth$ be the continuous solution, the discrete solution,
and an arbitrary discrete test field, respectively.
By \eqref{eq:A=norm} and Galerkin orthogonality
\begin{align}\nonumber
\abs{\wth-\vsh}\DGQn^2
&=\calA\DGQn\big(\wth-\vsh;\wth-\vsh\big)\\
&=\calA\DGQn\big(\wth-\vs;\wth-\vsh\big).
\label{eq:DG=A}
\end{align}
We now want to prove that the right-hand side is bounded by
$\abs{\wth-\vsh}\DGQn$.

If we choose $\wth\in\bVp$ so that 
\begin{equation}\label{eq:wthProj}
\int_K\bigg((w\hp-v)\Big(
\nabla\cdot\btau+c^{-2}\der{w} t \Big) +(\btau\hp-\bsigma)\cdot\Big(\nabla w+
\der{\btau} t\Big)\bigg)\di V=0\qquad \forall \wt\in \bVp(K),\ \forall K\subset Q_n,
\end{equation}
the volume terms in
$\calA\DGQn\big(\wth-\vs;\wth-\vsh\big)$ 
expressed as in~\eqref{eq:DG} vanish.
If assumption \eqref{eq:dVpAssumpt} is satisfied, taking $\wth$ as the orthogonal $L^2$ projection of $\vs$ on $\bVp(K)$ ensures~\eqref{eq:wthProj}.

\begin{remark}\label{rem:MonkRichter}
A straightforward application of the analysis of~\cite[\S5]{MoRi05}
is not possible here.
In \cite{MoRi05}, total degree space--time polynomial
spaces $\IP^p(K)$ were used, and the first-order wave operator
maps $\IP^p(K)^{d+1}$ into $\IP^{p-1}(K)^{d+1}$.
Then, 
a projection that is orthogonal to
$\IP^{p-1}(K)$ and $\IP^p(F)$, where $F$ is a face of $K$, was
employed in order to define $\wth$ satisfying~\eqref{eq:wthProj}.
In our case, where space--time tensor product polynomials are used, 
the first-order wave operator does not map to lower-degree polynomial spaces.
Consequently, the $L^2$ projection into the orthogonal to the image
space does not ``save'' sufficiently many degrees of freedom to be able to use them to cancel some trace.
Therefore, we define $\wth$ by projecting 
against the whole discrete space, and
we exploit the \emph{coercivity in seminorm} property in the first
line of~\eqref{eq:DG=A}, which was not used in \cite{MoRi05}; see
\S\ref{s:QOsmooth} and \S\ref{s:QOrough} below.
\end{remark}

For the sake of clarity, we firstly deduce a quasi-optimality bound under the assumption that $\vs$ does not present corner singularities (\S\ref{s:QOsmooth}), and then in the more realistic case involving weighted spaces (\S\ref{s:QOrough}).
These ``quasi-optimality'' bounds (i.e.\ \eqref{eq:AbstractErrorS} and \eqref{eq:AbstractErrorB}) differ from classical ones (from C\'ea lemma) in that they allow to control the Galerkin error in terms of the $L^2$-projection  error, as opposed to the best approximation error in the energy norm.

\subsubsection{The smooth solution case}\label{s:QOsmooth}

If the IBVP data are artificially chosen so that the solution $(v,\bsigma)\in C^0([0,T]; H^1\OO)^{3}$, then all traces on the mesh faces possess $L^2$ summability.
Therefore, if $\wth$ satisfies \eqref{eq:wthProj} (e.g.\ because assumption \eqref{eq:dVpAssumpt} is verified and $\wth$ is the orthogonal $L^2$ projection of $\vs$ on $\bVp\Th$), by using the Cauchy-Schwarz inequality, one can see that the bilinear form in \eqref{eq:DG} admits the following upper bound:
\begin{equation}\label{eq:boundAsmooth}
\begin{split}
&\calA\DGQn\big(\wth-\vs;\wth-\vsh\big)\\
&\qquad\qquad\qquad\le 2 \abs{\wth-\vs}\DGQnp\abs{\wth-\vsh}\DGQn,
\end{split}
\end{equation}
where the $\abs{\cdot}\DGQnp$ seminorm is defined by
\begin{align*}
\abs{\wt}\DGQnp^2:=&\abs{\wt}\DGQn^2
+2\N{c^{-1}w^-}_{L^2(\Fspa\cap Q_n)}^2
+2\N{\btau^-}_{L^2(\Fspa\cap Q_n)^2}^2
\\
&+\Norm{\beta^{-1/2}\mvl{w}}_{L^2(\Ftime\cap Q_n)}^2
+\Norm{\beta^{-1/2} w }_{L^2(\FN\cap Q_n)}^2\\
&
+\Norm{\alpha^{-1/2}\mvl{\btau}\cdot\bn^x_F}_{L^2(\Ftime\cap Q_n)}^2
+\Norm{\alpha^{-1/2}\btau\cdot \bn_\Omega^x}_{L^2(\FD\cap Q_n)}^2.
\end{align*} 
Therefore, from~\eqref{eq:DG=A} and~\eqref{eq:boundAsmooth} we get
\begin{equation}\label{eq:ProjErrS}
\abs{\wth-\vsh}\DGQn\le 2 \abs{\vs-\wth}\DGQnp.
\end{equation}

In the following proposition, we prove that the
$\abs{\cdot}\DGQn$ seminorm of the error (thus
the $L^2$ norm in space of the error at
each discrete time $t_n$) is bounded by the $DG^+$
seminorm on the partial cylinder $Q_n$
of the $L^2$ projection error of the solution into the discrete space.

\begin{proposition}\label{prop:QOsmooth}
Let $\vs$ and $\vsh$ be the solutions of \eqref{eq:IBVP} and \eqref{eq:DG}, respectively, 
and 
assume that $(v,\bsigma)\in C^0([0,T]; H^1\OO)^{3}$.
Let $\wth$ be the orthogonal $L^2$ projection of $\vs$ into $\bVp\Th$.
Assume that $\bVp\Th$ is a piecewise polynomial space and
that~\eqref{eq:dVpAssumpt} holds true.
Then, for each discrete time $t_n$, we have the error bounds
\begin{align}\nonumber
\frac12\N{c^{-1}(v-v\hp)}_{L^2(\Omega\times\{t_n\})}
+\frac12\N{\bsigma-\bsigma\hp}_{L^2(\Omega\times\{t_n\})^2}
&\le\abs{\vs-\vsh}\DGQn
\\&\le 3\abs{\vs-\wth}\DGQnp.
\label{eq:AbstractErrorS}
\end{align}
\end{proposition}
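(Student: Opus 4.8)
The plan is to assemble the two inequalities in \eqref{eq:AbstractErrorS} from the coercivity-in-seminorm identity \eqref{eq:A=norm}, the Galerkin-orthogonality identity \eqref{eq:DG=A}, and the continuity estimate \eqref{eq:ProjErrS} already derived above, together with two elementary observations about the seminorms. The two substantial ingredients are in place, so the proof is essentially an assembly step.

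For the \emph{lower bound} (first inequality in \eqref{eq:AbstractErrorS}), I would first observe that, by the definition of the truncated seminorm $\abs{\cdot}\DGQn=\abs{(\cdot)\chi_{Q_n}}\DG$, multiplication by $\chi_{Q_n}$ turns the interface $\Omega\times\{t_n\}$ into the ``top'' space-like face of the partial cylinder $Q_n$. Consequently $\abs{\vs-\vsh}\DGQn^2$ bounds from above the term $\tfrac12\N{c^{-1}(v-v\hp)}_{L^2(\Omega\times\{t_n\})}^2+\tfrac12\N{\bsigma-\bsigma\hp}_{L^2(\Omega\times\{t_n\})^2}^2$, all remaining contributions to the seminorm being nonnegative. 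Setting $a:=\N{c^{-1}(v-v\hp)}_{L^2(\Omega\times\{t_n\})}$ and $b:=\N{\bsigma-\bsigma\hp}_{L^2(\Omega\times\{t_n\})^2}$, the elementary inequality $\tfrac12(a+b)\le(\tfrac12a^2+\tfrac12b^2)^{1/2}$, equivalent to $(a-b)^2\ge0$, then yields the first bound.

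For the \emph{upper bound} (second inequality), I would split the error by the triangle inequality as $\abs{\vs-\vsh}\DGQn\le\abs{\vs-\wth}\DGQn+\abs{\wth-\vsh}\DGQn$, bound the first summand via $\abs{\cdot}\DGQn\le\abs{\cdot}\DGQnp$ (the $\mathrm{DG}^+$ seminorm being the $\mathrm{DG}$ seminorm augmented by nonnegative terms), and bound the second summand by $2\abs{\vs-\wth}\DGQnp$ using \eqref{eq:ProjErrS}; summing gives the factor $3$. Here I would recall that \eqref{eq:ProjErrS} itself rests on \eqref{eq:DG=A} and on the Cauchy--Schwarz continuity estimate \eqref{eq:boundAsmooth}, whose validity hinges on the hypothesis $\vs\in C^0([0,T];H^1(\Omega))^3$ (ensuring $L^2$-summability of all face traces) and on $\wth$ being the $L^2$ projection, so that the volume terms in \eqref{eq:DG} vanish by \eqref{eq:wthProj}, which in turn uses assumption \eqref{eq:dVpAssumpt}.

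I do not expect a genuine obstacle, since the ``hard'' work—seminorm coercivity of $\calA\DG$ and the one-sided continuity bound \eqref{eq:boundAsmooth}—has already been carried out. The only point requiring care is the bookkeeping in the lower bound: verifying that truncation by $\chi_{Q_n}$ contributes exactly the one-sided trace $\tfrac12\N{c^{-1}(v-v\hp)}_{L^2(\Omega\times\{t_n\})}^2$ (and its $\bsigma$ analogue) rather than a two-sided jump, because the $+$-trace of the truncated field across $\Omega\times\{t_n\}$ vanishes.
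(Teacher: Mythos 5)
Your proposal is correct and follows essentially the same route as the paper's proof: the first inequality from the fact that $\abs{\cdot}\DGQn$ contains the one-sided traces on $\Omega\times\{t_n\}$ (the top face of $Q_n$) together with the elementary bound $\frac12(A+B)\le(\frac12(A^2+B^2))^{1/2}$, and the second from the triangle inequality, the estimate \eqref{eq:ProjErrS} (which rests on \eqref{eq:dVpAssumpt}, \eqref{eq:wthProj}, and \eqref{eq:boundAsmooth}), and the comparison $\abs{\cdot}\DGQn\le\abs{\cdot}\DGQnp$. Your extra remark on why truncation by $\chi_{Q_n}$ yields a one-sided trace rather than a jump is exactly the observation the paper makes in words.
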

\begin{proof}
The first inequality comes from the definition of the $\abs{\cdot}\DGQn$ seminorm, which contains the $L^2$ norm of the traces on $\Omega\times \{t_n\}$ (as opposed to the full $\abs{\cdot}\DG$ norm, which contains the $L^2$ norm of the time-jumps only on $\Omega\times \{t_n\}$), and the bound $\frac12(A+B)\le(\frac12(A^2+B^2))^{1/2}$ for all $A,B\in\IR$.
Assumption \eqref{eq:dVpAssumpt} ensures identity \eqref{eq:wthProj} and thus bounds \eqref{eq:boundAsmooth}--\eqref{eq:ProjErrS}.
The second inequality then follows from the triangle inequality, 
the bound \eqref{eq:ProjErrS}, and $\abs{\cdot}\DGQn\le\abs{\cdot}\DGQnp$.
\end{proof}

\subsubsection{The general case}\label{s:QOrough}
We now allow for corner singularities and assume that $\vs$ satisfies \eqref{eq:vsRegularity2} for some $k_t,k_x\in\IN$.

In order to control the inter-element terms on
the right-hand side of \eqref{eq:DG=A}, 
we cannot use directly the Cauchy--Schwarz inequality 
as in the derivation of~\eqref{eq:boundAsmooth}, because for 
$\btau\in C^0((0,T); H^{1,1}_\udelta\OO^2)$ we have $\btau\in L^1(F)^2$ for $F\in\IFtime_\angle$ thanks to~\eqref{eq:H11trace}, 
but not necessarily $\btau\in L^2(F)^2$.
So we opt to apply the H\"older inequality in $L^1$--$L^\infty$ 
\cite[Appendix]{TWDiss}:
\begin{align*}
\abs{\int_F \mvl\btau\cdot \jmp{w}_\bN\di S} 
&\le 
\N{\mvl{\btau}}_{L^2(F_t;L^1(F_\bx))}
\N{\jmp{w}_\bN}_{L^2(F_t;L^\infty(F_\bx))}
\\&\le
C_{\infty|2} h_{F_\bx}^{-1/2} 
\N{\mvl{\btau}}_{L^2(F_t;L^1(F_\bx))}
\N{\jmp{w}_\bN}_{L^2(F_t;L^2(F_\bx))}
\end{align*}
for all $F\in\IFtime_\angle$, $\btau\in L^2((0,T); H^{1,1}_\udelta\OO^2)$ and elementwise-polynomial $w$.
Here $C_{\infty|2}$ stems from the inverse inequality \eqref{eq:inverseL1L2Loo} for the polynomial space,
thus it depends on the maximal polynomial degree in space admitted for $w$ in the corner elements.
The term $\int_\FD\btau\cdot\bn^x_\Omega w\di S$ is treated similarly.
The other terms appearing in $\calA\DG(\cdot,\cdot)$ can be controlled by the Cauchy--Schwarz inequality, as $\jmp{\bsigma}_\bN=0$ on $\Ftime$, $\bsigma\cdot\bn^x_\Omega=0$ on $\FN$, and all other traces are in $L^2(F)$ for the corresponding face $F$.

Proceeding as in \S\ref{s:QOsmooth}, we control the bilinear form in \eqref{eq:DG} as
\begin{equation}\label{eq:boundArough}
\begin{split}
&\calA\DGQn\big(\wth-\vs;\wth-\vsh\big)\\
&\qquad\qquad\qquad\le 2 
C_{\infty|2}\abs{\wth-\vs}\DGQnp\abs{\wth-\vsh}\DGQn
\end{split}
\end{equation}
where now 
\begin{equation}\label{eq:DG+general}
  \begin{split}
&\hspace{-5mm}\abs{\wt}\DGQnp^2:=\abs{\wt}\DGQn^2
+2\N{c^{-1}w^-}_{L^2(\Fspa\cap Q_n)}^2
+2\N{\btau^-}_{L^2(\Fspa\cap Q_n)^2}^2
\\
&+\Norm{\beta^{-1/2}\mvl{w}}_{L^2(\Ftime\cap Q_n)}^2
+\Norm{\beta^{-1/2} w }_{L^2(\FN\cap Q_n)}^2\\
&+\sum_{F\in\IFtime_\angle(Q_n)} h_{F_\bx}^{-1}
\N{\alpha^{-1/2}\mvl{\btau}\cdot\bn^x_F}_{L^2(F_t; L^1(F_\bx))}^2
+\Norm{\alpha^{-1/2}\mvl{\btau}\cdot\bn^x_F}_{L^2(\Ftime\cap\Fh^\odot\cap Q_n)}^2\\
&+\sum_{F\in\IFD_\angle(Q_n)}
h_{F_\bx}^{-1}\N{\alpha^{-1/2}\btau\cdot\bn_\Omega^x}_{L^2(F_t; L^1(F_\bx))}^2
+\Norm{\alpha^{-1/2}\btau\cdot \bn_\Omega^x}_{L^2(\FD\cap\Fh^\odot\cap Q_n)}^2.
\end{split}
\end{equation}
Therefore, from~\eqref{eq:DG=A} and~\eqref{eq:boundArough} we get
\begin{equation}\label{eq:ProjErr}
\abs{\wth-\vsh}\DGQn\le 2C_{\infty|2}\abs{\vs-\wth}\DGQnp.
\end{equation}

The following result is proved exactly as Proposition~\ref{prop:QOsmooth}, 
using \eqref{eq:ProjErr} instead of \eqref{eq:ProjErrS} and the corresponding modified $\abs{\cdot}\DGQnp$ seminorm.

\begin{proposition}\label{prop:QO}
Let $\vs$ and $\vsh$ be the solutions of \eqref{eq:IBVP} and \eqref{eq:DG}, respectively.
Assume that $\vs$ satisfies the 
weighted regularity condition \eqref{eq:vsRegularity2} for some $k_t,k_x\in\IN$.
Let $\wth$ be the orthogonal $L^2$ projection of $\vs$ into $\bVp\Th$.
Assume that $\bVp\Th$ is a piecewise polynomial space and
that~\eqref{eq:dVpAssumpt} holds true.
Then, for each discrete time $t_n$, we have the error bounds
\begin{align}\nonumber
\frac12\N{c^{-1}(v-v\hp)}_{L^2(\Omega\times\{t_n\})}
+\frac12\N{\bsigma-\bsigma\hp}_{L^2(\Omega\times\{t_n\})^2}
&\le\abs{\vs-\vsh}\DGQn
\\&\le (2 C_{\infty|2}+1)\abs{\vs-\wth}\DGQnp.
\label{eq:AbstractErrorB}
\end{align}
\end{proposition}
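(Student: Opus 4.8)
The plan is to follow the proof of Proposition~\ref{prop:QOsmooth} line by line, substituting the weighted-space bound \eqref{eq:ProjErr} for the smooth-case bound \eqref{eq:ProjErrS} and the seminorm $\abs{\cdot}\DGQnp$ of \eqref{eq:DG+general} for the smooth one. The statement is a chain of two inequalities, which I would treat separately: the left one is a matter of reading off the definition of the truncated seminorm, while the right one rests on the coercivity-in-seminorm identity \eqref{eq:A=norm}, Galerkin orthogonality, and the trace bound \eqref{eq:boundArough} established just before the statement.

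For the first inequality, I would note that, by construction (see the discussion opening \S\ref{s:AbstractAnalysis}), the truncated seminorm $\abs{\cdot}\DGQn$ contains the full spatial $L^2$ traces $\frac12\N{c^{-1}(v-v\hp)}_{L^2(\Omega\times\{t_n\})}^2$ and $\frac12\N{\bsigma-\bsigma\hp}_{L^2(\Omega\times\{t_n\})^2}^2$ at the top $\Omega\times\{t_n\}$ of the partial cylinder $Q_n$ (rather than only the time-jumps, as in the untruncated $\abs{\cdot}\DG$). The bound then follows from the elementary estimate $\frac12(A+B)\le(\frac12(A^2+B^2))^{1/2}$ applied with $A,B$ equal to these two $L^2$ norms.

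For the second inequality, I would start from the identity \eqref{eq:DG=A}, namely $\abs{\wth-\vsh}\DGQn^2=\calA\DGQn(\wth-\vs;\wth-\vsh)$, which holds because $\wth$ is the $L^2$ projection and \eqref{eq:dVpAssumpt} guarantees \eqref{eq:wthProj}, annihilating the volume contributions. The crux is bounding the surviving inter-element and boundary terms on the right. Here I would invoke exactly the estimates preceding the statement: the Cauchy--Schwarz inequality for every term whose traces are $L^2$-summable, and the H\"older $L^1$--$L^\infty$ bound combined with the polynomial inverse inequality \eqref{eq:inverseL1L2Loo} for the $\mvl{\btau}$ terms on the corner faces $F\in\IFtime_\angle\cup\IFD_\angle$, where $\bsigma$ is only $L^1$ in space by \eqref{eq:H11trace}. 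This yields \eqref{eq:boundArough} and hence \eqref{eq:ProjErr}, that is $\abs{\wth-\vsh}\DGQn\le 2C_{\infty|2}\abs{\vs-\wth}\DGQnp$. The triangle inequality $\abs{\vs-\vsh}\DGQn\le\abs{\vs-\wth}\DGQn+\abs{\wth-\vsh}\DGQn$, together with $\abs{\cdot}\DGQn\le\abs{\cdot}\DGQnp$, then produces the constant $2C_{\infty|2}+1$.

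The only genuine subtlety, and one that has already been dispatched in the material before the statement, is the low spatial regularity of $\bsigma$ near the corners in $\calS$: its traces lie in $L^1$ but not $L^2$, which forces the replacement of Cauchy--Schwarz by the H\"older/inverse-inequality argument and is the source both of the factor $C_{\infty|2}$ and of the $h_{F_\bx}^{-1}$-weighted $L^1$ terms entering the modified seminorm \eqref{eq:DG+general}. Once \eqref{eq:ProjErr} is in hand, no further difficulty arises: the assembly by the triangle inequality is routine, and the argument is, as the text observes, \emph{identical} to that of Proposition~\ref{prop:QOsmooth}.
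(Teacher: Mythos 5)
Your proposal is correct and follows essentially the same route as the paper: the paper proves Proposition~\ref{prop:QO} exactly as Proposition~\ref{prop:QOsmooth}, reading the first inequality off the definition of $\abs{\cdot}\DGQn$ together with $\frac12(A+B)\le(\frac12(A^2+B^2))^{1/2}$, and obtaining the second from \eqref{eq:DG=A}, the H\"older/inverse-inequality bound \eqref{eq:boundArough} yielding \eqref{eq:ProjErr}, the triangle inequality, and $\abs{\cdot}\DGQn\le\abs{\cdot}\DGQnp$. Your identification of the $L^1$-trace issue for $\bsigma$ near the corners as the only genuine subtlety, and of its resolution as the source of the factor $C_{\infty|2}$, matches the paper's argument precisely.
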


Recall from \eqref{eq:InvIneqC} that $C_{\infty|2}\le p+1$,
$p$ being the maximal polynomial degree in the spatial variable of the elements of $\bVp(\calT_h^\angle)$,
thus the abstract error bound \eqref{eq:AbstractErrorB} can be made fully explicit, and the bounding constant depends linearly on $p$.
Note that when a local mesh refinement strategy is used, 
since corner elements are taken to be small, $p$ can be chosen to be 0 or 1.

\begin{remark}[Energy bounds]
We briefly discuss the dissipation properties of the proposed scheme.
We define the energy of a pair $\wt$ at time $t\in[0,T]$ as 
$$
\calE(t;w,\btau):=\frac12\int_\Omega(c^{-2}w^2(\bx,t)+|\btau(\bx,t)|^2)\di\bx.
$$
Then, if $\vs$ is a solution of the IBVP \eqref{eq:IBVP} with $f=0$, we have the identity
$\calE(t;v,\bsigma)=\calE(0;v,\bsigma)-\int_{\deO\times(0,t)}v\bsigma\cdot\bn^x_\Omega\di S$
(e.g.\ \cite[equation~(15)]{MoPe18}).
If, moreover, the boundary conditions are homogeneous (i.e., $\gD=\gN=0$) 
the total energy is preserved, $\calE(t;v,\bsigma)=\calE(0;v,\bsigma)$.
Proceeding exactly as in \cite[(16)--(17)]{MoPe18} we see that the DG
method is dissipative and we quantify the energy dissipated in $Q_n$:
for $\vsh$ the solution of \eqref{eq:DG} and $1\le n\le N$,
\begin{align*}
\calE(t_n;v_h,\bsigma_h)=&\calE(0;v_0,\bsigma_0)
-\frac12 \Norm{c^{-1}\jmp{v_h}_t}_{L^2(\Fspa\cap Q_n)}^2
-\Norm{\alpha^{1/2}\jmp{v_h}_\bN}_{L^2(\Ftime\cap Q_n)^2}^2
-\Norm{\alpha^{1/2}v_h}_{L^2(\FD\cap Q_n)}^2\!
\\&\hspace{16mm}
-\frac12\Norm{\jmp{\bsigma}_t\rule{0pt}{3mm}}_{L^2(\Fspa\cap Q_n)^2}^2 
-\Norm{\beta^{1/2}\jmp{\bsigma}_\bN}_{L^2(\Ftime\cap Q_n)}^2
-\Norm{\beta^{1/2} \bsigma\cdot\bn_\Omega^x }_{L^2(\FN\cap Q_n)}^2.
\end{align*}
This means that the energy dissipated by the discrete solution 
is proportional to the jumps of the solution on the mesh skeleton and to
the mismatch with the weakly enforced (homogeneous) boundary conditions.
\end{remark}

\section{Projection and Galerkin error estimates}
\label{s:ErrorBounds}
Given a vector of elemental polynomial degrees 
$\bp=\big((p_{x,K}^v,p_{t,K}^v,p_{x,K}^\bsigma,p_{t,K}^\bsigma)\in\IN_0^4,\; K\in \calT_h\big)$, 
we choose as trial and test space the piecewise-polynomial space (with $\otimes$ denoting
the algebraic tensor product)
\begin{equation}\label{eq:Vp}
\bV_\bp\Th=
\prod_{K=K_\bx\times I_n\in\calT_h}
\Big(\IP^{p_{x,K}^v}(K_\bx)\otimes\IP^{p_{t,K}^v}(I_{n})\Big)
\times
\Big(\IP^{p_{x,K}^\bsigma}(K_\bx)\otimes\IP^{p_{t,K}^\bsigma}(I_{n})\Big)^2.
\end{equation}
Here and in what follows, 
the space $\IP^{p_{x,K}^\bullet}(K_\bx)$, for $\bullet\in\{v,\bsigma\}$, 
of polynomials of degree at most $p_{x,K}^\bullet$ in two variables can be replaced 
by the space $\mathbb{Q}^{p_{x,K}^\bullet}(K_\bx)$ of polynomials of the same degree in each of the two variables.

Bound \eqref{eq:AbstractErrorB} states that we can control the DG seminorm of the Galerkin error by the DG$^+$ seminorm of the error of its elementwise $L^2$ projection in $\bV_\bp\Th$.
So we need to derive error estimates for the \emph{traces} of the
\emph{volume} $L^2$ projection. The $p$-version of these estimates was 
derived in \cite{HSS02a} and \cite{Che12} for cubes and simplices,
respectively; here we aim at the $h$-version, for which simpler scaling argument are sufficient.
We partly follow \cite[\S5]{MuScSc18}.

\subsection{Bounds of DG seminorms in terms of volume norms}\label{s:SeminormToVolume}

As this section contains auxiliary results, we consider directly the general case of solutions admitting corner singularities.
If $\vs$ is smooth, in particular, if $\bsigma\in C^0([0,T];
H^1\OO^2)$, then in \eqref{eq:DGpBound} and \eqref{eq:DGpBound2} below,
the summations over faces in $\calF_h^\angle$ or elements in $\calT_h^\angle$ can be dropped, 
if simultaneously the terms over (subsets of) $\calF_h^\odot$ and $\calT_h^\odot$ are extended to (analogous subsets of) the whole of $\calF_h$ and $\calT_h$.

We define the space containing both the exact solution and the discrete functions
\begin{align*}
V_+:=&\Big(C^0\big([0,T]; H^{2,2}_\udelta\OO\big)\times C^0\big([0,T]; H^{1,1}_{\udelta,N}\OO^2\big)\Big)+ \bV_\bp\Th,
\quad \text{where}\\
H^{1,1}_{\udelta,N}\OO^2:=&\{\btau\in H^{1,1}_\udelta\OO^2, \btau\cdot\bn^x_\Omega=0 \oon \FN\}.
\end{align*}
Here we want to derive a bound of $\abs{\wt}\DGQnp$ in terms of elementwise sums of traces, 
tracking the dependence on spatial and temporal meshsize for all $\wt\in V_+$.
For all $\wt\in V_+$ there exists a (not necessarily unique) decomposition
in continuous\,+\,discrete components:
\begin{align}
\label{eq:wt=h+tilde}
\wt=(\tilde w,\tilde \btau)+\wth,\qquad
&(\tilde w,\tilde \btau)\in
C^0([0,T]; H^{2,2}_\udelta\OO) \times C^0([0,T]; H^{1,1}_{\udelta,N}\OO^2),
\\&\wth\in \bV_\bp\Th. 
\nonumber
\end{align}
Thus for $\wt\in V_+$, taking into account the
definition~\eqref{eq:DG+general}, we get
\begin{align}\label{eq:DGpBound}
&\abs{\wt}\DGQnp^2
\lesssim
\sum_{K=K_\bx\times I_{n'}\in\calT_h(Q_n)}\Bigg[
\N{c^{-1}w}_{L^2(K_\bx\times\{t_{n'-1},t_{n'}\})}^2
+\N{\btau}_{L^2(K_\bx\times\{t_{n'-1},t_{n'}\})^2}^2
\\
&\hspace{20mm}+\sum_{F\in\IFtime(K)\cup\IFD(K)} 
\Norm{\alpha^{1/2}w}_{L^2(F)}^2
+\sum_{F\in\IFtime(K)\cup\IFN(K)}
\Norm{\beta^{-1/2}w}_{L^2(F)}^2
\nonumber\\
&\hspace{20mm}+\sum_{F\in\IFtime_\odot(K)\cup\IFD_\odot(K)}
\Norm{\alpha^{-1/2}\btau\cdot \bn_F^x}_{L^2(F)}^2
+\sum_{F\in\IFtime_\angle(K)\cup\IFD_\angle(K)}
h_{F_\bx}^{-1}\N{\alpha^{-1/2}\btau\cdot\bn^x_F}_{L^2(F_t; L^1(F_\bx))}^2
\Bigg]
\nonumber\\
&\hspace{30mm}+\Norm{\beta^{1/2}\jmp{\btau }_\bN}_{L^2(\Ftime\cap Q_n)}^2
+\Norm{\beta^{1/2} \btau\cdot\bn_\Omega^x }_{L^2(\FN\cap \overline{Q_n})}^2,
\nonumber
\end{align}
where all traces are taken from the element $K$.
Here we used the $L^1$-$L^2$ inequality \eqref{eq:L1L2Loo} to treat the term containing $\mvl{\btau}$ on some of the faces $F\in\IFtime_\angle$; in particular, on the faces in the form $F=\deK_\angle\cap\deK_\odot$ for $K_\angle\subset\calT_h^\angle$ and $K_\odot\subset\calT_h^\odot$, the trace of $\btau|_{K_\odot}$ is lifted from the $L^2(F_t;L^1(F_\bx))$ norm to the $L^2(F)$ norm.

The two terms containing $\beta^{1/2}\btau$ are non zero only for the discrete component $\btau\hp$ of $\btau$, recall \eqref{eq:wt=h+tilde}.
Since this is a polynomial of given degree, we control these terms using the inverse inequality \eqref{eq:inverseL1L2Loo}.
The terms on the Neumann boundary can be controlled as follows:
\begin{align*}
&\Norm{\beta^{1/2} \btau\cdot\bn_\Omega^x }_{L^2(\FN\cap \overline{Q_n})}^2 
\overset{\widetilde\btau\cdot\bn_\Omega^x=0}=\Norm{\beta^{1/2} \btau\hp\cdot\bn_\Omega^x }_{L^2(\FN\cap\overline{Q_n})}^2\\
&=\sum_{F\in\IFN_\odot(Q_n)}
\Norm{\beta^{1/2} \btau\hp\cdot\bn_\Omega^x }_{L^2(F)}^2
+\sum_{F\in\IFN_\angle(Q_n)}
\Norm{\beta^{1/2} \btau\hp\cdot\bn_\Omega^x }_{L^2(F)}^2\\
&\overset{\eqref{eq:inverseL1L2Loo}}\le\sum_{F\in\IFN_\odot(Q_n)}
\Norm{\beta^{1/2} \btau\hp\cdot\bn_\Omega^x }_{L^2(F)}^2
+C_{2|1}\sum_{F\in\IFN_\angle(Q_n)}
h_{F_\bx}^{-1}
\Norm{\beta^{1/2} \btau\hp\cdot\bn_\Omega^x }_{L^2(F_t;L^1(F_\bx))}^2\\
&\overset{\widetilde\btau\cdot\bn_\Omega^x=0}=\sum_{F\in\IFN_\odot(Q_n)}
\Norm{\beta^{1/2} \btau\cdot\bn_\Omega^x }_{L^2(F)}^2
+C_{2|1}\sum_{F\in\IFN_\angle(Q_n)}
h_{F_\bx}^{-1}
\Norm{\beta^{1/2} \btau\cdot\bn_\Omega^x }_{L^2(F_t;L^1(F_\bx))}^2\\
&= \sum_{K=K_\bx\times I_{n'}\in\calT_h(Q_n)}\bigg[
\sum_{F\in\IFN_\odot(K)}\Norm{\beta^{1/2} \btau\cdot\bn_\Omega^x }_{L^2(F)}^2
+C_{2|1}\sum_{F\in\IFN_\angle(K)}h_{F_\bx}^{-1}
\Norm{\beta^{1/2} \btau\cdot\bn_\Omega^x }_{L^2(F_t;L^1(F_\bx))}^2
\bigg].
\end{align*}
The jump term is controlled similarly: 
\begin{align*}
&\Norm{\beta^{1/2}\jmp{\btau }_\bN}_{L^2(\Ftime\cap Q_n)}^2 
\overset{\jmp{\widetilde \btau}_\bN=0}=
\Norm{\beta^{1/2}\jmp{\btau\hp}_\bN}_{L^2(\Ftime\cap Q_n)}^2\\
&=\sum_{F\in\IFtime_\odot(Q_n)} \Norm{\beta^{1/2}\jmp{\btau\hp}_\bN}_{L^2(F)}^2
+\sum_{F\in\IFtime_\angle(Q_n)} \Norm{\beta^{1/2}\jmp{\btau\hp}_\bN}_{L^2(F)}^2
\\
&\overset{\eqref{eq:inverseL1L2Loo}}\le 
\sum_{F\in\IFtime_\odot(Q_n)} \Norm{\beta^{1/2}\jmp{\btau\hp}_\bN}_{L^2(F)}^2
+C_{2|1}
\sum_{F\in\IFtime_\angle(Q_n)}h_{F_\bx}^{-1} \Norm{\beta^{1/2}\jmp{\btau\hp}_\bN}_{L^2(F_t;L^1(F_\bx))}^2
\\
&\overset{\jmp{\widetilde \btau}_\bN=0}= 
\sum_{F\in\IFtime_\odot(Q_n)} \Norm{\beta^{1/2}\jmp{\btau}_\bN}_{L^2(F)}^2
+ C_{2|1}
\sum_{F\in\IFtime_\angle(Q_n)}h_{F_\bx}^{-1} \Norm{\beta^{1/2}\jmp{\btau}_\bN}_{L^2(F_t;L^1(F_\bx))}^2
\\
&\overset{\eqref{eq:L1L2Loo}}\le 
(1+C_{2|1})\sum_{K=K_\bx\times I_{n'}\in\calT_h^\odot(Q_n)}
\sum_{F\in\IFtime(K)} \Norm{\beta^{1/2}\btau\cdot\bn_K^x}_{L^2(F)}^2
\\&\hspace{15mm}
+C_{2|1}
\sum_{K=K_\bx\times I_{n'}\in\calT_h^\angle(Q_n)}
\sum_{F\in\IFtime(K)}h_{F_\bx}^{-1}
\Norm{\beta^{1/2} \btau\cdot\bn_K^x }_{L^2(F_t;L^1(F_\bx))}^2,
\end{align*}
where in the last step we used again the $L^1$-$L^2$ inequality \eqref{eq:L1L2Loo} to treat
the contributions from $\calT_h^\odot$ to $\jmp{\btau}_\bN$ on the
faces of the form $F=\deK_\angle\cap\deK_\odot$ for
$K_\angle\subset\calT_h^\angle$ and $K_\odot\subset\calT_h^\odot$.

For all elements $K=K_\bx\times I_n\in\calT_h$, 
the standard weighted trace inequality, applied in the time and space directions independently, reads
\begin{align}\label{eq:TraceT}
\N{\varphi}_{L^2(K_\bx\times\{t_{n-1},t_{n}\})}^2
&\lesssim h_n^{-1} \N{\varphi}_{L^2(K)}^2+h_n \abs{\varphi}_{H^1(I_n;L^2(K_\bx))}^2
\qquad \forall \varphi\in H^1\big(I_n;L^2(K_\bx)\big),\\
\N{\varphi}_{L^2(\deK_\bx\times I_n)}^2
&\lesssim h_{K_\bx}^{-1} \N{\varphi}_{L^2(K)}^2
+h_{K_\bx}\abs{\varphi}_{L^2(I_n;H^1(K_\bx))}^2
\qquad \forall \varphi\in L^2\big(I_n;H^1(K_\bx)\big),
\label{eq:TraceX}
\end{align}
where the hidden constant in \eqref{eq:TraceX} only depends on the
shape-regularity parameter of the space mesh 
(see e.g.\ \cite[\(1.6.6\)]{BRS94} or \cite[Lemma~2]{MoPe18} for star-shaped elements).
The corresponding result for weighted spaces follows 
from Lemma~\ref{lem:H11} (\!\!\cite[Lemma~1.3.2c]{TWDiss}):
\begin{align}\label{eq:TraceH11d}
\N{\varphi}_{L^2(I_n;L^1(\deK_\bx))}^2
&\lesssim \N{\varphi}_{L^2(K)}^2
+h_{K_\bx}^{2-2\delta_K}\abs{\varphi}_{L^2(I_n;H^{1,1}_\udelta(K_\bx))}^2
\qquad \forall \varphi\in L^2\big(I_n;H^{1,1}_\udelta(K_\bx)\big).
\end{align}

We recall that the numerical flux parameters $\alpha,\beta$ are
assumed to be constant on each element face on which they are defined.
For each element $K\in\calT_h$, we write
\begin{align}\nonumber
\alpha^K_{\min}:=&\min_{F\in\IFtime(K)\cup\IFD(K)}\{\alpha|_F\},
\qquad\qquad
\beta^K_{\min}:=\min_{F\in\IFtime(K)\cup\IFN(K)}\{\beta|_F\},
\\
\alpha^K_{\max}:=&\max_{F\in\IFtime(K)\cup\IFD(K)}\{\alpha|_F\},
\qquad\qquad
\beta^K_{\max}:=\max_{F\in\IFtime(K)\cup\IFN(K)}\{\beta|_F\},
\label{eq:AlphaBetaK}
\\
\tta_K:=&\max\{(\alpha^K_{\max})^{1/2},(\beta^K_{\min})^{-1/2}\},
\qquad
\ttb_K:=\max\{(\beta^K_{\max})^{1/2},(\alpha^K_{\min})^{-1/2}\}.
\nonumber
\end{align}

As the spatial elements are assumed not to have degenerating faces, 
there holds
\begin{equation}\label{eq:hFhK}
h_{F_\bx}\approx h_{K_\bx},\quad\text{for } F\in\IFtime(K)\cup\IFD(K)\cup\IFN(K), \quad K\in\calT_h.
\end{equation}

We have now all the ingredients to derive an upper bound on the $\abs{\cdot}\DG$ seminorm 
in terms of standard space--time (weighted, Bochner) Sobolev norms of its argument over the mesh elements.
Note that we cannot expect to obtain a uniform bound in term of the spatial and temporal meshsize (e.g.\ for $w=1$, $\btau=\bzero$, and constant $\alpha$ and $\beta$, the seminorm $\abs{\wt}\DGp^2$ is proportional to the 2-dimensional Hausdorff measure of $\calF_h$, which is not bounded for $h_\bx,h_t\searrow0$).

\begin{proposition}\label{prop: DGpBound2}
For all $\wt\in V_+$, the following bound holds true:
\begin{align}\label{eq:DGpBound2}
&\abs{\wt}\DGQnp^2\\&\lesssim\!\!
\sum_{K=K_\bx\times I_{n'}\in\calT_h(Q_n)}\!\bigg[
h_{n'}^{-1}\Big(\N{c^{-1}w}_{L^2(K)}^2
+\N{\btau}_{L^2(K)^2}^2\Big)
+h_{n'}\Big(\abs{c^{-1}w}_{H^1(I_{n'};L^2(K_\bx))}^2
+\abs{\btau}_{H^1(I_{n'};L^2(K_\bx)^2)}^2\Big)
\nonumber\\
&\hspace{30mm}
+h_{K_\bx}^{-1}\N{\tta_K w}_{L^2(K)}^2
+h_{K_\bx}\abs{\tta_K w}_{L^2(I_{n'};H^1(K_\bx))}^2
+\underbrace{h_{K_\bx}^{-1}}_{h_{K_\bx}\lesssim h_{F_\bx}}
\N{\ttb_K\btau}_{L^2(K)^2}^2\bigg]
\nonumber\\
&+\sum_{K=K_\bx\times I_{n'}\in\calT_h^\odot(Q_n)}
h_{K_\bx}\abs{\ttb_K\btau}_{L^2(I_{n'};H^1(K_\bx)^2)}^2
+\sum_{K=K_\bx\times I_{n'}\in\calT_h^\angle(Q_n)}
\underbrace{h_{K_\bx}^{1-2\delta_K}}_{h_{K_\bx}\lesssim h_{F_\bx}}\abs{\ttb_K\btau}_{L^2(I_{n'};H^{1,1}_\udelta(K_\bx)^2)}^2,
\nonumber
\end{align}
where the hidden constant depends linearly on $\max\{p_{x,K}^\bsigma,\; K\in \calT_h^\angle\}$ through 
the inverse inequality constant $C_{2|1}$ (recall~\eqref{eq:InvIneqC}).
\end{proposition}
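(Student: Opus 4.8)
The plan is to assemble the proposition from three ingredients already established: the face-trace bound~\eqref{eq:DGpBound}, the flux-parameter definitions~\eqref{eq:AlphaBetaK}, and the trace inequalities~\eqref{eq:TraceT}--\eqref{eq:TraceH11d}. First I would take~\eqref{eq:DGpBound} as the starting point, since it has already reduced $\abs{\wt}\DGQnp^2$ to an element-by-element sum of face-trace norms, together with the two residual terms $\N{\beta^{1/2}\jmp{\btau}_\bN}_{L^2(\Ftime\cap Q_n)}^2$ and $\N{\beta^{1/2}\btau\cdot\bn_\Omega^x}_{L^2(\FN\cap\overline{Q_n})}^2$. These two terms involve only the discrete component $\btau\hp$ (because $\widetilde\btau\cdot\bn_\Omega^x=0$ and $\jmp{\widetilde\btau}_\bN=0$), and the displayed computations preceding the statement already rewrite them as element-local face contributions: on faces abutting $\calS$ the inverse inequality~\eqref{eq:inverseL1L2Loo} produces the factor $C_{2|1}\lesssim p+1$ together with a weighted $L^2(F_t;L^1(F_\bx))$ norm, while on the remaining faces the plain $L^2(F)$ norm is retained; on mixed faces $F=\deK_\angle\cap\deK_\odot$ the regular-side trace is lifted from $L^2(F_t;L^1(F_\bx))$ to $L^2(F)$ via~\eqref{eq:L1L2Loo}. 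This is precisely the source of the asserted linear dependence of the hidden constant on $\max\{p_{x,K}^\bsigma:K\in\calT_h^\angle\}$.

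Next I would discharge the flux parameters. By~\eqref{eq:AlphaBetaK}, on every relevant face of $K$ one has $\alpha^{1/2}\le\tta_K$ and $\beta^{-1/2}\le\tta_K$, so all $w$-trace terms are controlled after replacing their weights by $\tta_K$; likewise $\alpha^{-1/2}\le\ttb_K$ and $\beta^{1/2}\le\ttb_K$ bound the weights of all $\btau$-trace terms by $\ttb_K$. It then remains to convert each face-trace norm into a volume norm, element by element. The space-like traces at $\{t_{n'-1},t_{n'}\}$ are handled by the temporal trace inequality~\eqref{eq:TraceT}, producing the $h_{n'}^{-1}\N{\cdot}_{L^2(K)}^2+h_{n'}\abs{\cdot}_{H^1(I_{n'};L^2(K_\bx))}^2$ contributions. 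The time-like traces of $w$, and those of $\btau$ on the regular faces in $\IFtime_\odot(K)\cup\IFD_\odot(K)$, are handled by the spatial trace inequality~\eqref{eq:TraceX}, yielding $h_{K_\bx}^{-1}\N{\cdot}_{L^2(K)}^2+h_{K_\bx}\abs{\cdot}_{L^2(I_{n'};H^1(K_\bx))}^2$; throughout I would invoke the non-degeneracy relation~\eqref{eq:hFhK} to replace each $h_{F_\bx}$ by $h_{K_\bx}$, which is exactly what the two underbraces in~\eqref{eq:DGpBound2} record.

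The genuinely delicate step is the contribution of $\btau$ on the corner faces in $\IFtime_\angle(K)\cup\IFD_\angle(K)$. Here the standard $L^2$ trace inequality~\eqref{eq:TraceX} is unavailable, because by Lemma~\ref{lem:H11} a field in $H^{1,1}_\udelta(K_\bx)$ need not possess $L^2$ traces on edges through the corner; this is exactly why~\eqref{eq:DGpBound} already carries these traces in the weaker $L^2(F_t;L^1(F_\bx))$ norm with a compensating factor $h_{F_\bx}^{-1}$. I would therefore apply the weighted trace inequality~\eqref{eq:TraceH11d} on such faces, bounding $\N{\btau}_{L^2(I_{n'};L^1(\deK_\bx))}^2$ by $\N{\btau}_{L^2(K)^2}^2+h_{K_\bx}^{2-2\delta_K}\abs{\btau}_{L^2(I_{n'};H^{1,1}_\udelta(K_\bx)^2)}^2$. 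Multiplying by the $h_{K_\bx}^{-1}$ factor (after~\eqref{eq:hFhK}) then produces the $h_{K_\bx}^{-1}\N{\ttb_K\btau}_{L^2(K)^2}^2$ term, which coincides with the one already generated by the regular faces, and the corner seminorm term $h_{K_\bx}^{1-2\delta_K}\abs{\ttb_K\btau}_{L^2(I_{n'};H^{1,1}_\udelta(K_\bx)^2)}^2$. Collecting the $L^2(K)$ volume contributions with the corresponding (weighted) seminorms over all $K\in\calT_h(Q_n)$, and separating the regular seminorm sum over $\calT_h^\odot(Q_n)$ from the weighted one over $\calT_h^\angle(Q_n)$, yields exactly~\eqref{eq:DGpBound2}. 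The main obstacle is thus purely the low spatial regularity of $\bsigma$ near the corners, which forces the $L^1$-type weighted trace estimate and the attendant inverse-inequality constant, rather than any essential difficulty in the bookkeeping.
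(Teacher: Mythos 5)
Your proposal is correct and takes essentially the same route as the paper's proof: start from \eqref{eq:DGpBound}, absorb the two $\beta^{1/2}\btau$ terms via the displayed computations preceding the proposition (using \eqref{eq:inverseL1L2Loo}, whence the $C_{2|1}$ dependence, and \eqref{eq:L1L2Loo} on mixed faces), bound the flux weights by $\tta_K,\ttb_K$ via \eqref{eq:AlphaBetaK}, and convert face traces to volume norms with \eqref{eq:TraceT}, \eqref{eq:TraceX} on regular faces and the weighted inequality \eqref{eq:TraceH11d} on corner faces, using \eqref{eq:hFhK} throughout. The paper compresses exactly this assembly into one sentence, so your write-up is simply a more detailed account of the same argument.
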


\begin{proof}
From the bound \eqref{eq:DGpBound}, after treating the terms with
$\beta^{1/2}\btau$ as described, by using the trace inequalities
\eqref{eq:TraceT}, \eqref{eq:TraceX} and \eqref{eq:TraceH11d},
notation \eqref{eq:AlphaBetaK}, and assumption \eqref{eq:hFhK},
for all $\wt\in V_+$, we obtain~\eqref{eq:DGpBound2}.
\end{proof}

We note that the only polynomial degree affecting the bounding
constant in \eqref{eq:DGpBound2} is the degree of the space associated with the corner
elements, which is typically taken to be $0$ or $1$.
So in this case the bounding constant only depends on the element
shapes.

\subsection{\texorpdfstring{$L^2$}{L2} projection error estimates for a scalar function}\label{s:ProjEstimates}

In this section we prove error bounds in Bochner norms for the $L^2$ projection on polynomial spaces.

\subsubsection{The smooth case}\label{s:L2smooth}

For functions without corner singularities we have the following result.

\begin{proposition}\label{prop:L2proj}
Let $K=K_{\bx}\times I_n\in\calT_h$ be a prismatic space--time element,
i.e.\ $K_\bx$ is a shape-regular polygon,
with $h_{K_\bx}=\diam(K_{\bx})$ and $h_n=\abs{I_n}$. Denote by
$\Pi_\bp$, $\bp=(p_x,p_t)\in\IN_0^2,$ the $L^2(K)$-orthogonal
projection: $L^2(K)\to\IP^\bp(K)=\IP^{p_x}(K_\bx)\otimes\IP^{p_t}(I_n)$.

Let 
$\varphi\in H^{k_t+1}(I_n;L^2(K_{\bx}))\cap L^2(I_n;H^{k_x+1}(K_{\bx}))$, $k_x,k_t\in\IN_0$. 
Then
\begin{align}\nonumber
\N{\varphi-\Pi_\bp \varphi}_{L^2(I_n;L^2(K_\bx))}+&
h_n \abs{\varphi-\Pi_\bp \varphi}_{H^1(I_n;L^2(K_\bx))}+
h_{K_\bx}\abs{\varphi-\Pi_\bp \varphi}_{L^2(I_n;H^1(K_\bx))}\\
&\lesssim
h_n^{s_t+1}\abs{\varphi}_{H^{s_t+1}(I_n;
L^2(K_\bx))}+h_{K_\bx}^{s_x+1}\abs{\varphi}_{L^2(I_n;H^{s_x+1}(K_\bx))},
\label{eq:L2proj}
\end{align}
where $s_t:=\min\{k_t,p_t\}$, $s_x:=\min\{k_x,p_x\}$, and
the hidden constants 
are independent of $h_{K_\bx}$ and $h_n$, but depend on the polynomial
degrees $p_t$ and $p_x$, and on the shape-regularity constant of $K_\bx$.
\end{proposition}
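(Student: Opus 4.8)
The plan is to establish the tensor-product $L^2$ projection estimate \eqref{eq:L2proj} by exploiting the fact that $\Pi_\bp$ factorizes as a composition of one-dimensional (in time) and two-dimensional (in space) $L^2$ projections, and then invoking standard polynomial approximation theory (Bramble--Hilbert / Deny--Lions) on each factor together with scaling. Write $\pit:L^2(I_n)\to\IP^{p_t}(I_n)$ and $\pix:L^2(K_\bx)\to\IP^{p_x}(K_\bx)$ for the one-variable and two-variable $L^2$-orthogonal projections. Because $\IP^\bp(K)=\IP^{p_x}(K_\bx)\otimes\IP^{p_t}(I_n)$, these commute and $\Pi_\bp=\pit\circ\pix=\pix\circ\pit$. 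This is the structural observation that makes the anisotropic estimate accessible.

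\textbf{Step 1 (single-variable estimates by scaling).} First I would record the standard one-dimensional and two-dimensional $h$-version approximation bounds. On the reference interval $\wI=(-1,1)$ and reference polygon $\wK$, the Bramble--Hilbert lemma gives, for $s_t=\min\{k_t,p_t\}$ and $s_x=\min\{k_x,p_x\}$,
\begin{align*}
\N{\psi-\pit\psi}_{L^2(I_n)}&\lesssim h_n^{s_t+1}\abs{\psi}_{H^{s_t+1}(I_n)},
\qquad
\abs{\psi-\pit\psi}_{H^1(I_n)}\lesssim h_n^{s_t}\abs{\psi}_{H^{s_t+1}(I_n)},\\
\N{\phi-\pix\phi}_{L^2(K_\bx)}&\lesssim h_{K_\bx}^{s_x+1}\abs{\phi}_{H^{s_x+1}(K_\bx)},
\qquad
\abs{\phi-\pix\phi}_{H^1(K_\bx)}\lesssim h_{K_\bx}^{s_x}\abs{\phi}_{H^{s_x+1}(K_\bx)},
\end{align*}
the implied constants depending on the degree and on the shape-regularity of $K_\bx$ but not on the meshsizes. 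I would also note the $L^2$-stability $\N{\pit}\le1$, $\N{\pix}\le1$, and the commuting of $\pix$ with the time-derivative $\partial_t$ (since $\pix$ acts only in space), which is what lets me move projections past $\abs{\cdot}_{H^1(I_n;\cdot)}$ seminorms.

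\textbf{Step 2 (split the error via the tensor structure).} Writing $\varphi-\Pi_\bp\varphi=(\varphi-\pit\varphi)+\pit(\varphi-\pix\varphi)$ (using $\Pi_\bp=\pit\pix$), the triangle inequality reduces each of the three target terms to a ``pure-time'' contribution plus a ``pure-space'' contribution. For the pure-time term I apply the one-dimensional estimate of Step 1 pointwise in $\bx$ and integrate over $K_\bx$ (Fubini), obtaining factors $h_n^{s_t+1}\abs{\varphi}_{H^{s_t+1}(I_n;L^2(K_\bx))}$; the extra $h_n$ or $h_{K_\bx}$ weights in front of the $H^1$-seminorms on the left exactly absorb the one-power loss in the corresponding Step-1 bound, so every contribution lands at the homogeneous order $h_n^{s_t+1}$ or $h_{K_\bx}^{s_x+1}$ claimed on the right. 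For the pure-space term I use $L^2$-stability of $\pit$ in time together with the two-dimensional spatial estimate, again by Fubini. The $h_{K_\bx}\abs{\cdot}_{L^2(I_n;H^1(K_\bx))}$ term is handled symmetrically, using that $\pit$ commutes with spatial differentiation.

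\textbf{The main obstacle} is bookkeeping the anisotropy cleanly: one must verify that the mixed cross-terms (a space projection error differentiated in time, or vice versa) do not produce a spurious $h_n^{s_t}h_{K_\bx}$ or $h_{K_\bx}^{s_x}h_n$ factor that would spoil the claimed homogeneous orders. The resolution is that $\pit$ and $\pix$ commute and each is $L^2$-stable, so whenever one projection is applied in its ``own'' variable it either contributes the full approximation power or is simply bounded by the identity, and the prefactors $h_n$, $h_{K_\bx}$ are precisely the weights needed to rebalance the seminorm orders. No interaction between the two directions is created, which is exactly why the tensor-product (rather than total-degree) structure is essential here. I would finish by collecting the three estimates and summing, giving \eqref{eq:L2proj}.
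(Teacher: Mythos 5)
Your overall strategy---factorising $\Pi_\bp=\pit\circ\pix=\pix\circ\pit$ and reducing to one-directional Bramble--Hilbert estimates---is genuinely different from the paper's proof (which works on a reference element, derives \eqref{eq:hatBound} from the Peetre--Tartar lemma using compactness of the embedding $H^{s_t+1}(\wI;L^2(\wK_\wbx))\cap L^2(\wI;H^{s_x+1}(\wK_\wbx))\subset L^2(\wK)$, and then scales), and it can be made to work. But as written there is a genuine gap exactly at the point you call ``the main obstacle'': the cross terms. Take $h_n \abs{\pit(\varphi-\pix\varphi)}_{H^1(I_n;L^2(K_\bx))}$, the space-projection contribution to the second term on the left of \eqref{eq:L2proj}. $L^2$-stability of $\pit$ says nothing about an $H^1(I_n;\cdot)$ seminorm, and $\pit$ does \emph{not} commute with $\partial_t$ (already for $p_t=0$: the derivative of the mean is zero, the mean of the derivative is not). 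The commutation you do have, $\partial_t\pix=\pix\partial_t$, pushes the time derivative onto $\varphi$: after invoking (unstated, though true) $H^1$-stability of $\pit$, you are left with $\N{(I-\pix)\partial_t\varphi}_{L^2(K)}$, and to extract the factor $h_{K_\bx}^{s_x+1}$ from this you would need $\partial_t\varphi\in L^2(I_n;H^{s_x+1}(K_\bx))$---a \emph{mixed} regularity that the hypothesis $\varphi\in H^{k_t+1}(I_n;L^2(K_\bx))\cap L^2(I_n;H^{k_x+1}(K_\bx))$ does not provide; bounding it instead by $\N{\partial_t\varphi}_{L^2(K)}$ yields only $O(h_n)$, which spoils the claimed rate whenever $s_t\ge 1$. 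The mirror-image problem occurs for $h_{K_\bx}\abs{\pix(\varphi-\pit\varphi)}_{L^2(I_n;H^1(K_\bx))}$. Avoiding precisely this demand for mixed derivatives is the point of the proposition, and is what the paper's compactness argument handles automatically.

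The repair within your framework is not commutation but \emph{inverse inequalities on the polynomial factor}. Since $\pit(\varphi-\pix\varphi)(\bx,\cdot)\in\IP^{p_t}(I_n)$ for a.e.\ $\bx$, the one-dimensional inverse estimate gives $\abs{\pit(\varphi-\pix\varphi)}_{H^1(I_n;L^2(K_\bx))}\le C(p_t)\,h_n^{-1}\N{\pit(\varphi-\pix\varphi)}_{L^2(K)}$; the prefactor $h_n$ cancels the $h_n^{-1}$, and then $L^2$-stability of $\pit$ plus your two-dimensional spatial estimate produce $h_{K_\bx}^{s_x+1}\abs{\varphi}_{L^2(I_n;H^{s_x+1}(K_\bx))}$. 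Symmetrically, for the third term on the left of \eqref{eq:L2proj} use the decomposition $\varphi-\Pi_\bp\varphi=(\varphi-\pix\varphi)+\pix(\varphi-\pit\varphi)$ together with the spatial inverse inequality $\abs{\pix\psi}_{H^1(K_\bx)}\le C(p_x)\,h_{K_\bx}^{-1}\N{\pix\psi}_{L^2(K_\bx)}$, valid on shape-regular elements with constant depending on $p_x$ and the shape-regularity parameter. With these two substitutions your Steps 1--2 close the argument, so the flaw is a missing key tool rather than a dead end; the repaired proof is a legitimate, more constructive alternative to the paper's Peetre--Tartar route, at the price of constants that now also carry the inverse-inequality dependence on the degrees.
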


\begin{proof}
Consider a reference element $\wK=\wK_{\wbx}\times\wI$,  and denote by
$\wPi_\bp$, $\bp\in\IN_0^2$, the $L^2(\wK)$-orthogonal
projection: $L^2(\wK)\to\IP^\bp(\wK)=\IP^{p_x}(\wK_\wbx)\otimes\IP^{p_t}(\wI)$. 
Let $\whvarphi$ be in 
$H^{k_t+1}(\wI;L^2(\wK_{\wbx}))\cap L^2(\wI;H^{k_x+1}(\wK_{\wbx}))$.
We prove that
\begin{align}\nonumber
\N{\whvarphi-\wPi_\bp \whvarphi}_{L^2(\wI;L^2(\wK_\wbx))}
+\abs{\whvarphi-\wPi_\bp \whvarphi}_{H^1(\wI;L^2(\wK_\wbx))}
&+\abs{\whvarphi-\wPi_\bp \whvarphi}_{L^2(\wI;H^1(\wK_\wbx))}\\
&\lesssim
\abs{\whvarphi}_{H^{s_t+1}(\wI; L^2(\wK_\wbx))}+
\abs{\whvarphi}_{L^2(\wI;H^{s_x+1}(\wK_\wbx))}.
\label{eq:hatBound}
\end{align}
The assertion \eqref{eq:L2proj} then follows from a scaling argument.

We recall that the Peetre--Tartar lemma, as formulated in \cite[Lemma~A.38]{EG04}, states that if $X,Y,Z$ are Banach spaces, $A:X\to Y$ is injective, $T:X\to Z$ is compact, 
$\N{x}_X\lesssim\N{Ax}_Y+\N{Tx}_Z$ for all $x\in X$, then 
$\N{x}_X\lesssim\N{Ax}_Y$ for all $x\in X$.
Within this setting, we fix 
\begin{align*}
X&=H^{s_t+1}(\wI;L^2(\wK_{\wbx}))\cap L^2(\wI;H^{s_x+1}(\wK_{\wbx})),\\ 
Y&=\Big(\IP^{s_x}(\wK_\wbx)\otimes\IP^{s_t}(\wI)\Big)
\times L^2(\wK) \times L^2(\wK)^{s_x+2},\\
Z&=L^2(\wK),\\
A&:\whpsi\mapsto\Big(\wPi_{(s_x,s_t)}\whpsi, \quad\partial_{\widehat t}^{s_t+1} \whpsi,
\quad D^{s_x+1}_\wbx \whpsi\Big)
\end{align*}
where $D^{s_x+1}_\wbx\whpsi$ denotes the collection of the $s_x+2$ space partial derivatives of order exactly $s_x+1$ of $\whpsi$,
and $T:X\to Z$ is the natural embedding.

The intersection $L^2(\wI;H^1(\wK_\wbx))\cap H^1(\wI;L^2(\wK_\wbx))$ is isomorphic to $H^1(\wK)$, thus it is compactly\footnote{Recall that if $U\subset V$ is a compact inclusion of Banach spaces, the embedding of the corresponding Bochner spaces $L^2(\wI;U)\subset L^2(\wI;V)$ is not compact; see a counterexample in \cite[Appendix~A]{PeSp18}.} embedded in $L^2(\wK)$.
Since $X\subset L^2(\wI;H^1(\wK_\wbx))\cap H^1(\wI;L^2(\wK_\wbx))$,
the embedding operator $T:X\to Z$, is compact.

If $A\whpsi=0$ then $\whpsi$ has vanishing $(s_t+1)$th time derivative and
$(s_x+1)$th space derivatives, so it is polynomial of degree at most
$s_t$ in time and $s_x$ in space; furthermore $A\whpsi=0$ implies that $\whpsi$
has vanishing projection on the polynomial space
$\IP^{s_x}(\wK_\wbx)\otimes\IP^{s_t}(\wI)$, so $\whpsi$ vanishes
identically in $\wK$ and $A$ is injective.

By the corollary on page 21 of \cite{Mazya} (see also the definitions on page 7), 
\begin{align*}
\N{\whpsi}_X
=\N{\whpsi}_{H^{s_t+1}(\wI;L^2(\wK_\wbx))}
+\N{\whpsi}_{L^2(\wI;H^{s_x+1}(\wK_\wbx))}
&\lesssim
\N{\whpsi}_{L^2(\wK)}+
\N{\partial_{\widehat t}^{s_t+1} \whpsi}_{L^2(\wK)}+
\N{D^{s_x+1}_\wbx \whpsi}_{L^2(\wK)^{s_x+2}}
\\&\le \N{A\whpsi}_Y+\N{T\whpsi}_Z,
\end{align*}
therefore, the Peetre--Tartar lemma gives
\begin{equation}\label{eq:vhat}
\N{\whpsi}_X 
=\N{\whpsi}_{H^{s_t+1}(\wI;L^2(\wK_\wbx))}
+\N{\whpsi}_{L^2(\wI;H^{s_x+1}(\wK_\wbx))}
\lesssim  \N{A\whpsi}_Y,\qquad\forall \whpsi\in X.
\end{equation}

When $s_t=p_t$ and $s_x=p_x$, then $\partial_{\widehat t}^{s_t+1} \wPi_\bp\whvarphi=0$ and $D^{s_x+1}_\wbx\wPi_\bp\whvarphi=0$.
Otherwise, if $s_t<p_t$, in general  $\partial_{\widehat t}^{s_t+1} \wPi_\bp\whvarphi\ne0$.
However, 
denoting $\widehat{\pi}_x^{s_x}:L^2(\widehat K_{\widehat\bx})\to\IP^{s_x}(\widehat K_{\widehat\bx})$ and 
$\widehat{\pi}_t^{s_t}:L^2(\widehat I)\to\IP^{s_t}(\widehat I)$ the $L^2$-orthogonal projections,
we split the projector 
$\wPi_\bp :=(I\otimes\widehat{\pi}_x^{s_x})\circ (\widehat{\pi}_t^{s_t}\otimes I)$,
and obtain
\begin{align*}
\N{\partial_{\widehat t}^{s_t+1} \wPi_\bp\whvarphi}_{L^2(\wK)}
&=
\N{\widehat\Pi_x^{s_x}\partial_{\widehat t}^{s_t+1} \widehat\Pi_t^{s_t}\whvarphi}_{L^2(\wK)} 
\le
\N{\partial_{\widehat t}^{s_t+1} \widehat\Pi_t^{s_t}\whvarphi}_{L^2(\wK)} \\
&\le
\N{\partial_{\widehat t}^{s_t+1}\whvarphi}_{L^2(\wK)} 
+\N{\partial_{\widehat t}^{s_t+1}(\whvarphi-\widehat\Pi_t^{s_t}\whvarphi)}_{L^2(\wK)} 
\lesssim 
\N{\partial_{\widehat t}^{s_t+1}\whvarphi}_{L^2(\wK)}
\end{align*}
by the Bramble--Hilbert lemma (we have used the $L^2$ continuity of the
$L^2$ projection in the first inequality).
Similarly, if $s_x<p_x$, then
\begin{align*}
\N{D_\wbx^{s_x+1} \wPi_\bp\whvarphi}_{L^2(\wK)}
&\lesssim 
\N{D_\wbx^{s_x+1} \whvarphi}_{L^2(\wK)}.
\end{align*}
In all cases, bound~\eqref{eq:vhat}
applied to $\whpsi=\whvarphi-\wPi_\bp\whvarphi$, 
together with $\wPi_\bp(\whvarphi-\wPi_\bp\whvarphi)=0$, 
gives
\begin{align}\label{eq:strongerbound}
\nonumber
&\N{\whvarphi-\wPi_\bp \whvarphi}_{H^{s_t+1}(\wI;L^2(\wK_\wbx))}
+\N{\whvarphi-\wPi_\bp \whvarphi}_{L^2(\wI;H^{s_x+1}(\wK_\wbx))}
=\N{\whvarphi-\wPi_\bp \whvarphi}_X
\lesssim
\N{A(\whvarphi-\wPi_\bp \whvarphi)}_Y
\\&\qquad
\lesssim
\N{\partial_{\widehat t}^{s_t+1} \whvarphi}_{ L^2(\wK)}
+\N{D^{s_x+1}_\wbx \whvarphi}_{ L^2(\wK)^{s_x+2}}
=\abs{\whvarphi}_{H^{s_t+1}(\wI; L^2(\wK_\wbx))}+
\abs{\whvarphi}_{L^2(\wI;H^{s_x+1}(\wK_\wbx))},
\end{align}
which admits \eqref{eq:hatBound} as a particular case.
\end{proof}

\begin{remark}
The proof of Proposition~\ref{prop:L2proj} actually provides a slightly stronger bound
under the same assumptions. 
Indeed, applying a scaling argument to bound~\eqref{eq:strongerbound} gives
\begin{align*}
\sum_{z_t=0}^{s_t} h_n^{z_t} \abs{\varphi-\Pi_\bp \varphi}_{H^{z_t}(I_n;L^2(K_\bx))}
&+\sum_{z_x=1}^{s_x} h_{K_\bx}^{z_x}\abs{\varphi-\Pi_\bp \varphi}_{L^2(I_n;H^{z_x}(K_\bx))}
\\&\lesssim
h_n^{s_t+1}\abs{\varphi}_{H^{s_t+1}(I_n;
L^2(K_\bx))}+h_{K_\bx}^{s_x+1}\abs{\varphi}_{L^2(I_n;H^{s_x+1}(K_\bx))}.
\end{align*}
\end{remark}

\subsubsection{The general case}\label{s:L2rough}

The next proposition is concerned with functions with
$H^{1,1}_\udelta$ regularity in space.

\begin{proposition}\label{prop:nearcorners}
Let $K=K_\bx\times I_n\in\calT_h^\angle$, with $h_{K_\bx}=\diam(K_{\bx})$ and $h_n=\abs{I_n}$. 
Denote by  $\Pi_{(0,p_t)}$, $p_t\in\IN_0$, the $L^2(K)$-orthogonal
projection: $L^2(K)\to\IP^0(K_\bx)\otimes\IP^{p_t}(I_n)$.

Let $\varphi\in H^{k_t+1}(I_n;L^2(K_{\bx}))\cap L^2(I_n;H^{1,1}_\udelta(K_{\bx}))$, $k_t\in\IN_0$. 
Then
\begin{align}\nonumber
\N{\varphi-\Pi_{(0,p_t)}\varphi}_{L^2(K_\bx)}+
&h_n\abs{\varphi-\Pi_{(0,p_t)}\varphi}_{H^1(I_n;L^2(K_\bx))}+
h_{K_\bx}^{1-\delta_K}\abs{\varphi-\Pi_{(0,p_t)}\varphi}_{L^2(I_n;H^{1,1}_\udelta(K_\bx))}
\\
&\lesssim
h_n^{s_t+1}\abs{\varphi}_{H^{s_t+1}(I_n;
L^2(K_\bx))}+h_{K_\bx}^{1-\delta_K}\abs{\varphi}_{L^2(I_n;H^{1,1}_\udelta(K_\bx))},
\label{eq:L2projCorner}
\end{align}
where $s_t:=\min\{k_t,p_t\}$, and the hidden constants are independent of $h_{K_\bx}$ and $h_n$, but depend on the polynomial degree $p_t$ and on the shape of $K_\bx$.
\end{proposition}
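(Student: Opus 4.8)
The plan is to mirror the proof of Proposition~\ref{prop:L2proj} exactly: first establish a scale-invariant estimate on a reference element $\wK=\wK_\wbx\times\wI$ (with the corner sitting at a fixed vertex of $\wK_\wbx$) via a Peetre--Tartar compactness argument, and then recover \eqref{eq:L2projCorner} by scaling $\bx=h_{K_\bx}\wbx$, $t=t_{n-1}+h_n\widehat t$. The only structural differences from the smooth case are that the spatial polynomial degree is now $0$, so the spatial projection error is measured against constants, and that the top-order spatial control is in the weighted seminorm $\abs{\cdot}_{H^{1,1}_\udelta}$ instead of $\abs{\cdot}_{H^1}$. I interpret the first left-hand term as the $L^2(K)=L^2(I_n;L^2(K_\bx))$ norm of the projection error, which is the only reading consistent with the scaling.

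On the reference element I would set $X=H^{s_t+1}(\wI;L^2(\wK_\wbx))\cap L^2(\wI;H^{1,1}_\udelta(\wK_\wbx))$, $Z=L^2(\wK)$, and define $A\colon\whpsi\mapsto\big(\wPi_{(0,s_t)}\whpsi,\ \partial_{\widehat t}^{s_t+1}\whpsi,\ \Phi_\udelta\nabla_{\wbx}\whpsi\big)$ into $Y=\big(\IP^0(\wK_\wbx)\otimes\IP^{s_t}(\wI)\big)\times L^2(\wK)\times L^2(\wK)^2$, with $T\colon X\to Z$ the natural embedding. Injectivity of $A$ follows as before: $A\whpsi=0$ forces $\nabla_{\wbx}\whpsi=0$ a.e.\ (since $\Phi_\udelta>0$ a.e.) and vanishing $(s_t+1)$th time derivative, so $\whpsi\in\IP^0(\wK_\wbx)\otimes\IP^{s_t}(\wI)$, and then $\wPi_{(0,s_t)}\whpsi=0$ gives $\whpsi\equiv0$. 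The inequality $\N{\whpsi}_X\lesssim\N{A\whpsi}_Y+\N{T\whpsi}_Z$ holds because the spatial part of the $X$-norm is by definition $\N{\whpsi}_{L^2(\wI;H^{1,1}_\udelta(\wK_\wbx))}=\big(\N{\whpsi}_{L^2(\wK)}^2+\N{\Phi_\udelta\nabla_{\wbx}\whpsi}_{L^2(\wK)}^2\big)^{1/2}$, which is controlled by $\N{T\whpsi}_Z$ and the third component of $A$, while the intermediate time derivatives $\partial_{\widehat t}^{j}\whpsi$, $0<j<s_t+1$, are absorbed by one-dimensional Sobolev interpolation in time (equivalently, the Maz'ya corollary used for Proposition~\ref{prop:L2proj}) between $\N{\whpsi}_{L^2(\wK)}$ and $\N{\partial_{\widehat t}^{s_t+1}\whpsi}_{L^2(\wK)}$.

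The delicate step is the compactness of $T$. In the smooth case one uses the isomorphism $L^2(\wI;H^1(\wK_\wbx))\cap H^1(\wI;L^2(\wK_\wbx))\cong H^1(\wK)$; this is unavailable here, and, as the footnote to Proposition~\ref{prop:L2proj} warns, the embedding $L^2(\wI;H^{1,1}_\udelta(\wK_\wbx))\subset L^2(\wK)$ is by itself \emph{not} compact. Instead I would invoke the Aubin--Lions--Simon lemma with the triple $H^{1,1}_\udelta(\wK_\wbx)\subset\subset L^2(\wK_\wbx)\subset L^2(\wK_\wbx)$: the first inclusion is compact by \eqref{eq:smooth3}, and every $\whpsi\in X$ has $\partial_{\widehat t}\whpsi\in L^2(\wI;L^2(\wK_\wbx))$ since $s_t\ge0$. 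This yields $X\subset\subset L^2(\wK)$, so $T$ is compact, and the Peetre--Tartar lemma gives $\N{\whpsi}_X\lesssim\N{A\whpsi}_Y$ for all $\whpsi\in X$. I expect this compactness to be the main technical obstacle, precisely because the weighted space precludes the elementary isomorphism argument available in the smooth case.

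Finally I would apply the reference bound to $\whpsi=\whvarphi-\wPi_{(0,p_t)}\whvarphi$. The component $\wPi_{(0,s_t)}\whpsi$ vanishes by nestedness of the projectors; the spatial component simplifies because $\wPi_{(0,p_t)}\whvarphi$ is constant in $\wbx$, whence $\Phi_\udelta\nabla_{\wbx}\whpsi=\Phi_\udelta\nabla_{\wbx}\whvarphi$ and this term equals $\abs{\whvarphi}_{L^2(\wI;H^{1,1}_\udelta(\wK_\wbx))}$ exactly; and $\N{\partial_{\widehat t}^{s_t+1}\whpsi}_{L^2(\wK)}$ is bounded by $\abs{\whvarphi}_{H^{s_t+1}(\wI;L^2(\wK_\wbx))}$ via a Bramble--Hilbert argument on the factored projector $\wPi_{(0,p_t)}=(\widehat\pi_x^{0}\otimes I)\circ(I\otimes\widehat\pi_t^{p_t})$, exactly as in \eqref{eq:strongerbound} (trivially when $s_t=p_t$). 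Recovering \eqref{eq:L2projCorner} is then scaling bookkeeping: using that on $K_\bx$ the weight satisfies $\Phi_\udelta(h_{K_\bx}\wbx)\approx h_{K_\bx}^{\delta_K}\Phi_\udelta(\wbx)$ up to constants depending only on the (bounded) distances to the other corners, one checks that all three left-hand terms and both right-hand terms scale like $h_n^{1/2}h_{K_\bx}$ times the corresponding reference quantity, so the factors $h_n$ and $h_{K_\bx}^{1-\delta_K}$ are exactly what matches the powers. Tracking the weight under this anisotropic scaling is the second point demanding care.
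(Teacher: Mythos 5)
Your proposal is correct and follows essentially the same route as the paper: the paper's proof of Proposition~\ref{prop:nearcorners} is a one-liner instructing the reader to repeat the Peetre--Tartar argument of Proposition~\ref{prop:L2proj} with the compact embedding $H^{1,1}_\udelta(K_\bx)\subset\subset L^2(K_\bx)$ of \eqref{eq:smooth3}, followed by the same scaling step, which is precisely what you carry out (including the vanishing of $\nabla_\wbx\wPi_{(0,p_t)}\whvarphi$, the nestedness of the projectors, and the Bramble--Hilbert treatment of $\partial_{\widehat t}^{s_t+1}\wPi_{(0,p_t)}\whvarphi$ when $s_t<p_t$). Your two points of extra care are exactly the details the paper's terse proof leaves implicit, and you resolve them correctly: compactness of $T$ cannot be obtained from the smooth-case isomorphism with $H^1(\wK)$ (and, as the footnote warns, not from the Bochner embedding alone), so invoking Aubin--Lions--Simon with the triple $H^{1,1}_\udelta(\wK_\wbx)\subset\subset L^2(\wK_\wbx)\subset L^2(\wK_\wbx)$ and $\partial_{\widehat t}\whpsi\in L^2(\wK)$ is the right fix, and the weight equivalence $\Phi_\udelta(\bx)\approx h_{K_\bx}^{\delta_K}\,|\wbx-\widehat\bc\,|^{\delta_K}$ on a corner element (valid since each element touches at most one point of $\calS$) gives the common factor $h_n^{1/2}h_{K_\bx}$ that makes the stated powers $h_n$, $h_n^{s_t+1}$ and $h_{K_\bx}^{1-\delta_K}$ match.
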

\begin{proof}
It suffices to repeat the reasoning in the proof of Proposition~\ref{prop:L2proj},
using the compactness of the embedding 
$H^{1,1}_\udelta(K_\bx)\subset L^2(K_\bx)$ recalled in~\eqref{eq:smooth3} 
(see \cite[Proposition~A.2.5]{TWDiss}).
\end{proof}

\subsection{Galerkin error estimates}
\label{s:GalErrEst}
As in \S\ref{s:AbstractAnalysis} and \S\ref{s:ProjEstimates}, 
we start by considering the case where the problem data $f,v_0,\bsigma_0,\gD,\gN$ are such that the solution $(v,\bsigma)$ 
does not present singularities due to the domain corners, then we
consider the general case.

\subsubsection{The smooth solution case}\label{s:GALsmooth}

\begin{proposition}\label{prop:SmoothError}
Let $\vs$ and $\vsh\in\bV_\bp\Th$ be the solutions of \eqref{eq:IBVP} and \eqref{eq:DG}, respectively.
Assume that
\begin{align*}
&v|_K\in H^{k_{t,K}^v+1}\big(I_{n'};{H^2(K_\bx)}\big)\cap L^2\big(I_{n'};H^{k_{x,K}^v+1}(K_\bx)\big)
\qquad\forall K=K_\bx\times I_{n'}\in \calT_h,
\\
&\bsigma|_K\in H^{k_{t,K}^\bsigma+1}\big(I_{n'};L^2(K_\bx)^2\big)\cap L^2\big(I_{n'};H^{k_{x,K}^\bsigma+1}(K_\bx)^2\big)
\qquad\forall K=K_\bx \times I_{n'}\in \calT_h,
\end{align*}
and $\bV_\bp\Th$ as in \eqref{eq:Vp}, with
$$
k_{t,K}^v,k_{x,K}^v,k_{t,K}^\bsigma,k_{x,K}^\bsigma,
p_{t,K}^v,p_{x,K}^v,p_{t,K}^\bsigma,p_{x,K}^\bsigma
\in \IN_0,\qquad |p_{x,K}^\bsigma-p_{x,K}^v|\le1, \qquad p_{t,K}^\bsigma=p_{t,K}^v.
$$
Define
$$
s_{t,K}^v:=\min\{k_{t,K}^v,p_{t,K}^v\},\;
s_{x,K}^v:=\min\{k_{x,K}^v,p_{x,K}^v\},\;
s_{t,K}^\bsigma:=\min\{k_{t,K}^\bsigma,p_{t,K}^\bsigma\},\;
s_{x,K}^\bsigma:=\min\{k_{x,K}^\bsigma,p_{x,K}^\bsigma\}.
$$
Then,
\begin{align*}
&\frac12\N{c^{-1}(v-v\hp)}_{L^2(\Omega\times\{t_n\})}
                 +\frac12\N{\bsigma-\bsigma\hp}_{L^2(\Omega\times\{t_n\})^2}
                 \le\abs{\vs-\vsh}\DGQn
\\
&\lesssim\hspace{-2mm}
\sum_{\substack{K=K_\bx\times I_{n'}\\\in\calT_h(Q_n)}}
\bigg[
(h_{n'}^{-1/2}c_K^{-1}+h_{K_\bx}^{-1/2}\tta_K)
\Big(
h_{n'}^{s_{t,K}^v+1}{h_{K_\bx}}\abs{v}_{H^{s_{t,K}^v+1}(I_{n'}; {H^2(K_\bx)})}
+h_{K_\bx}^{s_{x,K}^v+1}
\abs{v}_{L^2(I_{n'};H^{s_{x,K}^v+1}(K_\bx))}
\Big)\\
&\hspace{20mm}+(h_{n'}^{-1/2}+h_{K_\bx}^{-1/2}\ttb_K)
\Big(
h_{n'}^{s_{t,K}^\bsigma+1}\abs{\bsigma}_{H^{s_{t,K}^\bsigma+1}(I_{n'};L^2(K_\bx)^2)}
+h_{K_\bx}^{s_{x,K}^\bsigma+1}\abs{\bsigma}_{L^2(I_{n'};H^{s_{x,K}^\bsigma+1}(K_\bx)^2)}
\Big)\bigg].
\end{align*}
The hidden constant depends on the spatial and temporal polynomial degrees, 
the regularity exponents and the spatial element shapes.
\end{proposition}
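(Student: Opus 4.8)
The plan is to obtain the displayed estimate by chaining the three a-priori ingredients already established: the abstract quasi-optimality bound of Proposition~\ref{prop:QOsmooth}, the trace-to-volume inequality of Proposition~\ref{prop: DGpBound2}, and the anisotropic $L^2$-projection estimate of Proposition~\ref{prop:L2proj}. The first inequality, $\tfrac12\N{c^{-1}(v-v\hp)}_{L^2(\Omega\times\{t_n\})}+\tfrac12\N{\bsigma-\bsigma\hp}_{L^2(\Omega\times\{t_n\})^2}\le\abs{\vs-\vsh}\DGQn$, requires no further work: it is precisely the left half of \eqref{eq:AbstractErrorS}, which holds because the $\abs{\cdot}\DGQn$ seminorm controls the full $L^2$ traces of $v$ and $\bsigma$ on $\Ftn=\Omega\times\{t_n\}$.

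For the second inequality I would take $\wth=(w\hp,\btau\hp)$ to be the elementwise $L^2(K)$-orthogonal projection of $\vs$ into $\bVp\Th$. Since the data meet the smoothness hypotheses and \eqref{eq:dVpAssumpt} holds, Proposition~\ref{prop:QOsmooth} gives $\abs{\vs-\vsh}\DGQn\le 3\abs{\vs-\wth}\DGQnp$, so it remains to bound the $DG^+$ seminorm of the projection error. Here I invoke Proposition~\ref{prop: DGpBound2} with $\wt=\vs-\wth\in V_+$; in the present smooth setting the corner sums over $\calT_h^\angle$ and the weighted $H^{1,1}_\udelta$ terms are dropped and the $\calT_h^\odot$ contributions extend to all of $\calT_h$, so that $\abs{\vs-\wth}\DGQnp^2$ is bounded by an elementwise sum of scaled volume quantities: for the $v$-component the terms $h_{n'}^{-1}\N{c^{-1}(v-w\hp)}_{L^2(K)}^2$, $h_{n'}\abs{c^{-1}(v-w\hp)}_{H^1(I_{n'};L^2(K_\bx))}^2$, $h_{K_\bx}^{-1}\N{\tta_K(v-w\hp)}_{L^2(K)}^2$, $h_{K_\bx}\abs{\tta_K(v-w\hp)}_{L^2(I_{n'};H^1(K_\bx))}^2$, and the analogous $\ttb_K$-weighted quantities built from $\bsigma-\btau\hp$. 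To each of these I apply Proposition~\ref{prop:L2proj}, separately to the scalar $v$ with degrees $(p_{x,K}^v,p_{t,K}^v)$ and to the two components of $\bsigma$ with degrees $(p_{x,K}^\bsigma,p_{t,K}^\bsigma)$, replacing every projection-error norm by the interpolation seminorms $h_{n'}^{s_t+1}\abs{\cdot}_{H^{s_t+1}(I_{n'};L^2)}$ and $h_{K_\bx}^{s_x+1}\abs{\cdot}_{L^2(I_{n'};H^{s_x+1})}$ with the saturated exponents $s_{\bullet,K}^\star=\min\{k_{\bullet,K}^\star,p_{\bullet,K}^\star\}$.

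The remaining work is organisational but is where the care lies. After dividing each squared term by its prescribed power of $h_{n'}$ or $h_{K_\bx}$, the four $v$-contributions collapse to the common prefactor $(h_{n'}^{-1/2}c_K^{-1}+h_{K_\bx}^{-1/2}\tta_K)$ and the four $\bsigma$-contributions to $(h_{n'}^{-1/2}+h_{K_\bx}^{-1/2}\ttb_K)$; passing from the squared seminorm back to $\abs{\cdot}\DGQnp$ and then using the elementary embedding $\ell^2\hookrightarrow\ell^1$ (i.e.\ $(\sum_K a_K^2)^{1/2}\le\sum_K a_K$) converts the sum of squares into the unsquared per-element sum of the statement. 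The delicate point I expect is the $v$ temporal term: the spatial-$H^1$ contribution $h_{K_\bx}\abs{\tta_K(v-w\hp)}_{L^2(I_{n'};H^1(K_\bx))}^2$ can be estimated by exploiting the tensor-product factorisation $\Pi_\bp=\pit\otimes\pix$ and commuting the temporal projection past one spatial derivative, which is what produces the extra factor $h_{K_\bx}$ and the spatial $H^2(K_\bx)$-regularity of $v$ appearing in the bound; this is exactly the minimal smoothness of $v=\der ut$ afforded by \eqref{eq:vsRegularity2} (one spatial order more than $\bsigma=-\nabla u$), and it is compatible with the degree constraints $p_{t,K}^\bsigma=p_{t,K}^v$ and $\abs{p_{x,K}^\bsigma-p_{x,K}^v}\le1$ that keep the $v$- and $\bsigma$-rates matched. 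Tracking the weights $c_K$, $\tta_K$, $\ttb_K$ through these steps completes the argument; the surviving constant depends only on the polynomial degrees (through Proposition~\ref{prop:L2proj}) and on the shape-regularity of the spatial elements, as claimed.
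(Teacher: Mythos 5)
Your proposal follows exactly the paper's skeleton: the left inequality is read off from \eqref{eq:AbstractErrorS}, then quasi-optimality (Proposition~\ref{prop:QOsmooth}) is applied with $\wth$ the elementwise $L^2$ projection (legitimate, since the degree constraints $|p_{x,K}^\bsigma-p_{x,K}^v|\le1$, $p_{t,K}^\bsigma=p_{t,K}^v$ are what guarantee \eqref{eq:dVpAssumpt}), then Proposition~\ref{prop: DGpBound2} in its ``smooth'' form (corner sums dropped, $\calT_h^\odot$ extended to $\calT_h$), then Proposition~\ref{prop:L2proj} componentwise, and finally the prefactor bookkeeping and the $\ell^2\hookrightarrow\ell^1$ step. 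Up to the ``delicate point'' your argument and the paper's proof coincide.

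The gap is in your account of where the factor $h_{K_\bx}$ multiplying $\abs{v}_{H^{s_{t,K}^v+1}(I_{n'};H^2(K_\bx))}$ comes from. Your mechanism --- factorising $\Pi_\bp=\pit\otimes\pix$ and commuting $\pit$ past one spatial derivative --- applies only to the single term $h_{K_\bx}\abs{\tta_K(v-w\hp)}_{L^2(I_{n'};H^1(K_\bx))}^2$ of \eqref{eq:DGpBound2}, where a spatial gradient is available; there it does work, yielding $h_{K_\bx}^{-1/2}\tta_K\big(h_{n'}^{s_{t,K}^v+1}h_{K_\bx}\abs{v}_{H^{s_{t,K}^v+1}(I_{n'};H^2(K_\bx))}+h_{K_\bx}^{s_{x,K}^v+1}\abs{v}_{L^2(I_{n'};H^{s_{x,K}^v+1}(K_\bx))}\big)$, consistent with the statement. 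But the $v$-part of \eqref{eq:DGpBound2} also contains $h_{n'}^{-1}\N{c^{-1}(v-w\hp)}_{L^2(K)}^2$, $h_{n'}\abs{c^{-1}(v-w\hp)}_{H^1(I_{n'};L^2(K_\bx))}^2$ and, critically, $h_{K_\bx}^{-1}\N{\tta_K (v-w\hp)}_{L^2(K)}^2$. Applying \eqref{eq:L2proj} to these produces temporal contributions $h_{n'}^{-1/2}c_K^{-1}\,h_{n'}^{s_{t,K}^v+1}\abs{v}_{H^{s_{t,K}^v+1}(I_{n'};L^2(K_\bx))}$ and $h_{K_\bx}^{-1/2}\tta_K\,h_{n'}^{s_{t,K}^v+1}\abs{v}_{H^{s_{t,K}^v+1}(I_{n'};L^2(K_\bx))}$, in which no spatial derivative appears at all: there is nothing to commute, and no factor $h_{K_\bx}$ is generated. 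To reach the stated bound one must still upgrade $\abs{v}_{H^{s_{t,K}^v+1}(I_{n'};L^2(K_\bx))}$ into $h_{K_\bx}\abs{v}_{H^{s_{t,K}^v+1}(I_{n'};H^2(K_\bx))}$, and the paper does this by a device absent from your proposal: the H\"older bound $\N{\varphi}_{L^2(K_\bx)}\le|K_\bx|^{1/2}\N{\varphi}_{L^\infty(K_\bx)}\approx h_{K_\bx}\N{\varphi}_{L^\infty(K_\bx)}$ followed by the embedding $H^2(K_\bx)\subset C^0(\overline{K_\bx})$; this is where the extra spatial $H^2$-regularity of $v$ is actually consumed. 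The omission is not cosmetic: in the later application to corner-refined meshes (Proposition~\ref{prop:rough_errorBounds_lrefMeshes}) one has $h_{K_\bx}^{-1/2}\tta_K\approx c^{-1/2}h_\bx^{1/2}h_{K_\bx}^{-1}$, which blows up on the smallest elements precisely unless the compensating factor $h_{K_\bx}$ sits on the temporal term. So your commutation idea is a fine (arguably cleaner) substitute for the paper's argument on the one term it covers, but the proof is incomplete without the $L^2$--$L^\infty$/Sobolev-embedding step for the remaining three $v$-terms.
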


\begin{proof}
The DG scheme \eqref{eq:DG} is well-posed because the conditions on the polynomial degrees defining $\bV_\bp\Th$ ensure that condition \eqref{eq:dVpAssumpt} is satisfied, namely that the (piecewise) first-order wave operator maps the discrete space into itself. 
We use the quasi-optimality result of Proposition \ref{prop:QOsmooth}, 
the bound \eqref{eq:DGpBound2} on the $\abs{\cdot}\DGQnp$ seminorm
of the $L^2$ projection error $\wt|_K:=\vs|_K-(\Pi_{\bp^v_K} v|_K,\Pi_{\bp^\bsigma_K}\bsigma|_K)$, 
$\bp^v_K=(p^v_{x,K},p^v_{t,K})$ and
$\bp^\bsigma_K=(p^\bsigma_{x,K},p^\bsigma_{t,K})$
(neglecting the summation over elements in $\calT_h^\angle(Q_n)$, and replacing $\calT_h^\odot(Q_n)$ by $\calT_h(Q_n)$),
and the projection estimate \eqref{eq:L2proj} applied to each
component of $\wt$ 
\begin{align*}
&\frac12\N{c^{-1}(v-v\hp)}_{L^2(\Omega\times\{t_n\})}
+\frac12\N{\bsigma-\bsigma\hp}_{L^2(\Omega\times\{t_n\})^2}
\le\abs{\vs-\vsh}\DGQn
\overset{\eqref{eq:AbstractErrorS}}\lesssim\abs{\wt}\DGQnp
\\
&\overset{\eqref{eq:DGpBound2}}\lesssim
\bigg(\sum_{K=K_\bx\times I_{n'}\in\calT_h(Q_n)}\bigg[
h_{n'}^{-1}\Big(\N{c_K^{-1}(v-\Pi_{\bp_K^v} v)}_{L^2(K)}^2
+\N{\bsigma-\Pi_{\bp_K^\bsigma}\bsigma}_{L^2(K)^2}^2\Big)\\
&\hspace{8mm}
+h_{n'}\Big(\abs{c_K^{-1}(v-\Pi_{\bp_K^v}v)}_{H^1(I_{n'};L^2(K_\bx))}^2
+\abs{\bsigma-\Pi_{\bp_K^\bsigma}\bsigma}_{H^1(I_{n'};L^2(K_\bx)^2)}^2\Big)
\\
&\hspace{8mm}
+h_{K_\bx}^{-1}\N{\tta_K (v-\Pi_{\bp_K^v} v)}_{L^2(K)}^2
+h_{K_\bx}\abs{\tta_K (v-\Pi_{\bp_K^v} v)}_{L^2(I_{n'};H^1(K_\bx))}^2
+h_{K_\bx}^{-1}
\N{\ttb_K(\bsigma-\Pi_{\bp_K^\bsigma}\bsigma)}_{L^2(K)^2}^2\bigg]
\\
&\quad+\sum_{K=K_\bx\times I_{n'}\in\calT_h(Q_n)}
h_{K_\bx}\abs{\ttb_K(\bsigma-\Pi_{\bp_K^\bsigma}\bsigma)}_{L^2(I_{n'};H^1(K_\bx)^2)}^2
\bigg)^{1/2}
\\
&\overset{\eqref{eq:L2proj}}\lesssim
\bigg(\sum_{K=K_\bx\times I_{n'}\in\calT_h(Q_n)}\hspace{-8mm}
(h_{n'}^{-1}c_K^{-2}+h_{K_\bx}^{-1}\tta_K^2)
\Big(
h_{n'}^{s_{t,K}^v+1}\abs{v}_{H^{s_{t,K}^v+1}(I_{n'};L^2(K_\bx))}
+h_{K_\bx}^{s_{x,K}^v+1}\abs{v}_{L^2(I_{n'};H^{s_{x,K}^v+1}(K_\bx))}
\Big)^2
\\
&\hspace{5mm}+\sum_{K=K_\bx\times I_{n'}\in\calT_h(Q_n)}\hspace{-8mm}
(h_{n'}^{-1}+h_{K_\bx}^{-1}\ttb_K^2)
\Big(
h_{n'}^{s_{t,K}^\bsigma+1}\abs{\bsigma}_{H^{s_{t,K}^\bsigma+1}(I_{n'};L^2(K_\bx)^2)}
+h_{K_\bx}^{s_{x,K}^\bsigma+1}\abs{\bsigma}_{L^2(I_{n'};H^{s_{x,K}^\bsigma+1}(K_\bx)^2)}
\Big)^2 \bigg)^{1/2}\\
&\lesssim
\bigg(\sum_{K=K_\bx\times I_{n'}\in\calT_h(Q_n)}\hspace{-8mm}
(h_{n'}^{-1}c_K^{-2}+h_{K_\bx}^{-1}\tta_K^2)
\Big(
h_{n'}^{s_{t,K}^v+1}h_{K_\bx}\abs{v}_{H^{s_{t,K}^v+1}(I_{n'};L^\infty(K_\bx))}
+h_{K_\bx}^{s_{x,K}^v+1}\abs{v}_{L^2(I_{n'};H^{s_{x,K}^v+1}(K_\bx))}
\Big)^2
\\
&\hspace{5mm}+\sum_{K=K_\bx\times I_{n'}\in\calT_h(Q_n)}\hspace{-8mm}
(h_{n'}^{-1}+h_{K_\bx}^{-1}\ttb_K^2)
\Big(
h_{n'}^{s_{t,K}^\bsigma+1}\abs{\bsigma}_{H^{s_{t,K}^\bsigma+1}(I_{n'};L^2(K_\bx)^2)}
+h_{K_\bx}^{s_{x,K}^\bsigma+1}\abs{\bsigma}_{L^2(I_{n'};H^{s_{x,K}^\bsigma+1}(K_\bx)^2)}
\Big)^2 \bigg)^{1/2},
\end{align*}
where in the last inequality we have used the inclusion $H^2(K_\bx) \subset C^0(\overline{K_\bx})$, and
 $\abs{v}_{H^{s_{t,K}^v+1}(I_{n'};L^2(K_\bx))}\lesssim h_{K_\bx} \abs{v}_{H^{s_{t,K}^v+1}(I_{n'};L^\infty(K_\bx))}$. 
The assertion follows by using again the inclusion $H^2(K_\bx)\subset C^0(\overline{K_\bx})$.
\end{proof}

Proposition~\ref{prop:SmoothError} provides the same error
  convergence rates in both the $L^2\OO$ norm at a discrete time $t_n$ and in the $\abs{\cdot}\DGQn$ seminorm.
However, one would expect higher rates in the former norm: when the mesh is refined uniformly, in the computation of the $\abs{\cdot}\DGQn$ seminorm the error is integrated on larger and larger domains (the mesh skeletons).
This suggests that the convergence rates with respect to the mesh size for the $L^2(\Omega\times\{t_n\})$ norm of the error are suboptimal.
The same reasoning applies to the bounds in Propositions~\ref{prop:errorBound} and \ref{prop:refinedMeshBounds}.

Without using the information on the elementwise regularity of $\vs$, but considering only the global regularity, 
the statement of Proposition~\ref{prop:SmoothError} simplifies as in the next corollary.

\begin{cor}\label{cor:smooth}
Assume that, for the IBVP solution $(v,\bsigma)$ we have
\[
(v,\bsigma)\in C^{k_t-1}\big([0,T]; H^{k_x+1}\OO\big)\times 
C^{k_t}\big([0,T]; H^{k_x}\OO^2\big),
\]
for some $k_x,k_t\in\IN$ and $k_t\ge2$.
Assume also that the space $\bVp\Th$ and the local polynomial degrees
are defined as in 
Proposition~\ref{prop:SmoothError}.
Then the assertion of Proposition~\ref{prop:SmoothError} holds true with 
\begin{equation*}
k_{t,K}^v=k_t-2,\quad 
k_{x,K}^v=k_x,\quad
k_{t,K}^\bsigma=k_t-1,\quad
k_{x,K}^\bsigma=k_x-1\qquad
\forall K\in \calT_h.
\end{equation*}
Assuming in particular the velocity $c$ to be constant in $Q$, 
and that the mesh satisfies the quasi-uniformity condition
\begin{equation*}
h:=\max\Big\{\max_{K\in\calT_h(Q_n)}h_{K_\bx},\; \max_{n'=1,\ldots,n}ch_{n'}\Big\}
\le \rho
\min\Big\{\min_{K\in\calT_h(Q_n)}h_{K_\bx},\; \min_{n'=1,\ldots,n}ch_{n'}\Big\}
\end{equation*}
for some $\rho\ge1$,
that the polynomial degrees are uniform in space and time, i.e. 
$p_{t,K}^v=p_{x,K}^v=p_{t,K}^\bsigma=p_{x,K}^\bsigma=p$, 
with the numerical flux parameters chosen as $\alpha^{-1}=\beta=c$,
then
\begin{align*}
\frac12\N{c^{-1}(v-v\hp)}_{L^2(\Omega\times\{t_n\})}
                 +\frac12\N{\bsigma-\bsigma\hp}_{L^2(\Omega\times\{t_n\})^2}
                 &\le\abs{\vs-\vsh}\DGQn\\
&\lesssim C\vs\, h^{\min\{k_t-\frac12,k_x-\frac12,p+\frac12\}},
\end{align*}
where $C\vs$ depends on the mesh $\calT_h$ 
only through the quasi-uniformity parameter $\rho$ and the shape of
the elements.
\end{cor}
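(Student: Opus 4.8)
The plan is to obtain Corollary~\ref{cor:smooth} from Proposition~\ref{prop:SmoothError} in two stages: first converting the assumed \emph{global} space--time regularity into the \emph{elementwise} Bochner regularity required there (with the exponents declared in the statement), and then inserting the simplifying hypotheses — constant $c$, quasi-uniformity, uniform degree $p$, and the flux choice $\alpha^{-1}=\beta=c$ — to collapse the elementwise estimate into a single power of $h$.

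First I would verify the elementwise hypotheses of Proposition~\ref{prop:SmoothError}. Since $v\in C^{k_t-1}\big([0,T];H^{k_x+1}\OO\big)$, on the bounded interval $I_{n'}$ the embedding $C^m\subset H^m$ gives $v|_K\in H^{k_t-1}\big(I_{n'};H^{k_x+1}(K_\bx)\big)$; because $k_x\ge1$ we have $H^{k_x+1}(K_\bx)\subset H^2(K_\bx)$, so $v|_K\in H^{k_{t,K}^v+1}\big(I_{n'};H^2(K_\bx)\big)$ with $k_{t,K}^v=k_t-2$, while restriction in space yields $v|_K\in L^2\big(I_{n'};H^{k_x+1}(K_\bx)\big)$, i.e.\ $k_{x,K}^v=k_x$. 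The same argument applied to $\bsigma\in C^{k_t}\big([0,T];H^{k_x}\OO^2\big)$ gives $k_{t,K}^\bsigma=k_t-1$ and $k_{x,K}^\bsigma=k_x-1$. This is the routine part, using only $C^m\subset H^m$ on bounded intervals and the additivity of Sobolev seminorms over the spatial partition $\{K_\bx\}$.

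Next I would specialise the bound of Proposition~\ref{prop:SmoothError}. With $c$ constant and $\alpha^{-1}=\beta=c$, definition~\eqref{eq:AlphaBetaK} gives $\tta_K=c^{-1/2}$ and $\ttb_K=c^{1/2}$, independent of $K$, so every prefactor $h_{n'}^{-1/2}c_K^{-1}+h_{K_\bx}^{-1/2}\tta_K$ and $h_{n'}^{-1/2}+h_{K_\bx}^{-1/2}\ttb_K$ scales like $h^{-1/2}$. Quasi-uniformity makes $h_{K_\bx}\approx h$ and $c\,h_{n'}\approx h$, and uniform degrees make all exponents $s_{t,K}^v=\min\{k_t-2,p\}$, $s_{x,K}^v=\min\{k_x,p\}$, $s_{t,K}^\bsigma=\min\{k_t-1,p\}$, $s_{x,K}^\bsigma=\min\{k_x-1,p\}$ independent of $K$. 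The \emph{crucial} point is to use the root-of-sum-of-squares form of the estimate actually established in the proof of Proposition~\ref{prop:SmoothError} (rather than the looser elementwise sum printed in its statement), so that summing the squared elemental seminorms over $\calT_h(Q_n)$ reconstitutes the \emph{global} Bochner seminorms on $Q_n$ without incurring a factor equal to the number of elements. Counting powers of $h$ in the four contributions then gives
\begin{equation*}
h^{s_{t,K}^v+3/2},\quad h^{s_{x,K}^v+1/2},\quad h^{s_{t,K}^\bsigma+1/2},\quad h^{s_{x,K}^\bsigma+1/2},
\end{equation*}
where the extra factor $h_{K_\bx}$ in the $v$-time term (together with $H^2(K_\bx)\subset C^0(\overline{K_\bx})$, already exploited inside Proposition~\ref{prop:SmoothError}) produces the $+3/2$.

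Finally I would take the minimum of these exponents. Substituting the $s$-values, the candidates reduce to $\min\{k_t-\tfrac12,\,k_x-\tfrac12,\,p+\tfrac12\}$, since $k_x+\tfrac12$ and $p+\tfrac32$ are always dominated by $k_x-\tfrac12$ and $p+\tfrac12$; for $h\le1$ the larger powers are absorbed into the constant. The constant $C\vs$ then collects the finite global seminorms $\abs{v}_{H^{s_t^v+1}(0,t_n;H^2\OO)}$, $\abs{v}_{L^2(0,t_n;H^{s_x^v+1}\OO)}$, $\abs{\bsigma}_{H^{s_t^\bsigma+1}(0,t_n;L^2\OO)}$, $\abs{\bsigma}_{L^2(0,t_n;H^{s_x^\bsigma+1}\OO^2)}$ together with fixed powers of $c$, so it depends on $\calT_h$ only through $\rho$ and the element shapes, as claimed. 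The only genuinely delicate steps are the bookkeeping of the minimum and the insistence on the square-summed form to avoid an element-count factor; everything else is scaling.
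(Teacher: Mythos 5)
Your proof is correct and follows essentially the route the paper intends: the corollary appears there without a separate proof, as a direct specialization of Proposition~\ref{prop:SmoothError}, obtained exactly as you do by identifying the elementwise regularity indices $k_{t,K}^v=k_t-2$, $k_{x,K}^v=k_x$, $k_{t,K}^\bsigma=k_t-1$, $k_{x,K}^\bsigma=k_x-1$ and then inserting constant $c$, $\alpha^{-1}=\beta=c$, uniform degrees and quasi-uniformity. Your key observation---that one must return to the sum-of-squares bound inside the proof of Proposition~\ref{prop:SmoothError}, rather than use the $\ell^1$-type elementwise sum printed in its statement, so that squared elemental seminorms recombine into global Bochner seminorms without an element-count factor---is precisely what the paper itself does in the analogous refined-mesh argument (proof of Proposition~\ref{prop:rough_errorBounds_lrefMeshes}).
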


\subsubsection{The general case}\label{s:GALrough}
\begin{proposition}\label{prop:errorBound}
Let $\vs$ and $\vsh\in\bV_\bp\Th$ be the solutions of \eqref{eq:IBVP} and \eqref{eq:DG}, respectively.
Assume that
\begin{align*}
&v|_K \in H^{k_{t,K}^v+1}\big(I_{n'};H^{2,2}_\udelta(K_\bx)\big) \cap L^2\big(I_{n'};H^{k_{x,K}^v+1}(K_\bx)\big)
&&\forall K=K_\bx \times I_{n'} \in \calT_h,
\\
&\bsigma|_K\in H^{k_{t,K}^\bsigma+1}\big(I_{n'};L^2(K_\bx)^2\big)\cap L^2\big(I_{n'};H^{k_{x,K}^\bsigma+1}(K_\bx)^2\big)
&&\forall K=K_\bx\times I_{n'}\in \calT_h^\odot,
\\
&\bsigma|_K\in H^{k_{t,K}^\bsigma+1}\big(I_{n'};L^2(K_\bx)^2\big)\cap L^2\big(I_{n'};H^{1,1}_{\delta_K}(K_\bx)^2\big)
&&\forall K=K_\bx\times I_{n'}\in \calT_h^\angle,
\end{align*}
with
$\bV_\bp\Th$ as in \eqref{eq:Vp} and the indices 
$k^a_{b,K}$, $p^a_{b,K}$, $s^a_{b,K}$ (for $a\in\{v,\bsigma\}$, $b\in\{t,x\}$, $K\in\calT_h(Q_n)$)
as in Proposition~\ref{prop:SmoothError}. 
Then
\begin{align*}
&\frac12\N{c^{-1}(v-v\hp)}_{L^2(\Omega\times\{t_n\})}
+\frac12\N{\bsigma-\bsigma\hp}_{L^2(\Omega\times\{t_n\})^2}
\le\abs{\vs-\vsh}\DGQn
\\
&\lesssim
\sum_{\substack{K=K_\bx\times I_{n'}\\\in\calT_h(Q_n)}}
(h_{n'}^{-1/2}c_K^{-1}+h_{K_\bx}^{-1/2}\tta_K)
\Big(
h_{n'}^{s_{t,K}^v+1}h_{K_\bx}\abs{v}_{H^{s_{t,K}^v+1}(I_{n'}; H^{2,2}_\udelta(K_\bx))}
+h_{K_\bx}^{s_{x,K}^v+1}\abs{v}_{L^2(I_{n'};H^{s_{x,K}^v+1}(K_\bx))}
\Big)\\
&+\sum_{K=K_\bx\times I_{n'}\in\calT_h^\odot(Q_n)}
(h_{n'}^{-1/2}+h_{K_\bx}^{-1/2}\ttb_K)
\Big(
h_{n'}^{s_{t,K}^\bsigma+1}\abs{\bsigma}_{H^{s_{t,K}^\bsigma+1}(I_{n'};L^2(K_\bx)^2)}
+h_{K_\bx}^{s_{x,K}^\bsigma+1}\abs{\bsigma}_{L^2(I_{n'};H^{s_{x,K}^\bsigma+1}(K_\bx)^2)}
\Big)\\
&+\sum_{K=K_\bx\times I_{n'}\in\calT_h^\angle(Q_n)}
(h_{n'}^{-1/2}+h_{K_\bx}^{-1/2}\ttb_K)
\Big(
h_{n'}^{s_{t,K}^\bsigma+1}\abs{\bsigma}_{H^{s_{t,K}^\bsigma+1}(I_{n'};L^2(K_\bx)^2)}
+h_{K_\bx}^{1-\delta_K}\abs{\bsigma}_{L^2(I_{n'};H^{1,1}_\udelta(K_\bx)^2)}
\Big).
\end{align*}
As in Proposition~\ref{prop:SmoothError}, 
the hidden constant depends on the spatial and temporal polynomial degrees, 
the solution regularity indices and on the shape-regularity constant of the spatial elements.
\end{proposition}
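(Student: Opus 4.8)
The plan is to mirror the proof of Proposition~\ref{prop:SmoothError}, replacing the smooth projection estimate by its weighted counterpart wherever the argument touches a corner element. As in the smooth case, the conditions imposed on the polynomial degrees guarantee that \eqref{eq:dVpAssumpt} holds, so the discrete problem \eqref{eq:DG} is well posed and the elementwise $L^2(K)$ projection $\wth$ of $\vs$ into $\bVp\Th$ is an admissible choice in the abstract bound. The first (trivial) inequality is again just the definition of $\abs{\cdot}\DGQn$, which contains the $L^2$ traces on $\Omega\times\{t_n\}$, together with $\frac12(A+B)\le(\frac12(A^2+B^2))^{1/2}$. For the main estimate I would start from the general-case quasi-optimality bound \eqref{eq:AbstractErrorB} of Proposition~\ref{prop:QO}, namely $\abs{\vs-\vsh}\DGQn\le(2C_{\infty|2}+1)\abs{\vs-\wth}\DGQnp$, and then bound $\abs{\vs-\wth}\DGQnp$ through the seminorm-to-volume inequality \eqref{eq:DGpBound2} of Proposition~\ref{prop: DGpBound2}, which is valid for every $\wt\in V_+$ and already isolates the weighted corner contributions $h_{K_\bx}^{1-2\delta_K}\abs{\ttb_K\btau}_{L^2(I_{n'};H^{1,1}_\udelta(K_\bx)^2)}^2$ on $\calT_h^\angle$.

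Next I would insert the componentwise projection estimates into the right-hand side of \eqref{eq:DGpBound2}, applied to $\wt|_K=\vs|_K-(\Pi_{\bp_K^v}v|_K,\Pi_{\bp_K^\bsigma}\bsigma|_K)$. The velocity component $v$ is treated uniformly over all elements by Proposition~\ref{prop:L2proj}: the spatial hypothesis $v|_K\in L^2(I_{n'};H^{k_{x,K}^v+1}(K_\bx))$ supplies the unweighted space term, while the temporal term is first produced in the $L^2(K_\bx)$-valued norm and then lifted to the $H^{2,2}_\udelta(K_\bx)$-valued norm by the scaling $\N{\cdot}_{L^2(K_\bx)}\lesssim h_{K_\bx}\N{\cdot}_{L^\infty(K_\bx)}$ followed by the weighted embedding $H^{2,2}_\udelta(K_\bx)\hookrightarrow C^0(\overline{K_\bx})$, valid for $\delta_K\in[0,1)$, which produces exactly the factor $h_{K_\bx}$ displayed in the statement. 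For $\bsigma$ on the non-corner elements $\calT_h^\odot$ the same smooth estimate \eqref{eq:L2proj} applies. On the corner elements $\calT_h^\angle$, where $\bsigma$ only enjoys $L^2(I_{n'};H^{1,1}_{\delta_K}(K_\bx)^2)$ regularity, I would instead invoke the weighted estimate \eqref{eq:L2projCorner} of Proposition~\ref{prop:nearcorners} with the degree-zero spatial projection $\Pi_{(0,p_t)}$ (i.e.\ taking $p_{x,K}^\bsigma=0$ on corner elements); this matches the $H^{1,1}_\udelta$ term in \eqref{eq:DGpBound2}, whose powers of $h_{K_\bx}$ combine with the prefactors $h_{K_\bx}^{-1}$ and $h_{K_\bx}^{1-2\delta_K}$ there to reproduce the $h_{K_\bx}^{1-\delta_K}$ weight in the assertion.

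Finally, collecting the square roots of the elementwise terms and using $\big(\sum_K a_K^2\big)^{1/2}\le\sum_K a_K$ to pass from the $\ell^2$ summation in \eqref{eq:DGpBound2} to the $\ell^1$ summation in the statement, together with the abbreviations $\tta_K,\ttb_K$ and $c_K$, yields the three element sums of the claim. The hidden constant absorbs $2C_{\infty|2}+1$ (hence the linear dependence on the corner spatial degree), the projection constants of Propositions~\ref{prop:L2proj} and~\ref{prop:nearcorners}, and the shape-regularity constants. The part I expect to require the most care is the corner bookkeeping for $\bsigma$: one must check that the weighted volume term in \eqref{eq:DGpBound2}, the degree-zero constraint needed to apply \eqref{eq:L2projCorner}, and the powers $h_{K_\bx}^{1-\delta_K}$ versus $h_{K_\bx}^{1-2\delta_K}$ all align, and that the weighted embedding invoked for $v$ genuinely persists as $\delta_K\to1^-$; the remaining steps are the routine scaling and Cauchy--Schwarz manipulations already performed in Proposition~\ref{prop:SmoothError}.
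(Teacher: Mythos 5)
Your proposal matches the paper's proof essentially step for step: it uses the quasi-optimality bound \eqref{eq:AbstractErrorB} of Proposition~\ref{prop:QO}, the seminorm-to-volume bound \eqref{eq:DGpBound2}, the projection estimate \eqref{eq:L2proj} for $v$ on all elements and for $\bsigma$ on $\calT_h^\odot$, the weighted estimate \eqref{eq:L2projCorner} with the degree-zero spatial projection on $\calT_h^\angle$, and the final lifting of the temporal $v$-term via $\N{\cdot}_{L^2(K_\bx)}\lesssim h_{K_\bx}\N{\cdot}_{L^\infty(K_\bx)}$ together with the embedding $H^{2,2}_\udelta(K_\bx)\subset C^0(\overline{K_\bx})$. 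This is exactly the argument given in the paper, including the corner bookkeeping you flag as delicate, so no further comments are needed.
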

\begin{proof}
We proceed as in the proof of
  Proposition~\ref{prop:SmoothError}.
By using the quasi-optimality result of Proposition~\ref{prop:QO}, the bound
\eqref{eq:DGpBound2}, and the projection estimates~\eqref{eq:L2proj}
and~\eqref{eq:L2projCorner}, we get 
\begin{align*}
&\frac12\N{c^{-1}(v-v\hp)}_{L^2(\Omega\times\{t_n\})}
+\frac12\N{\bsigma-\bsigma\hp}_{L^2(\Omega\times\{t_n\})^2}
\le\abs{\vs-\vsh}\DGQn
\overset{\eqref{eq:AbstractErrorB}}
\lesssim\abs{\wt}\DGQnp
\\
&\overset{\eqref{eq:DGpBound2}}\lesssim
\bigg(\sum_{K=K_\bx\times I_{n'}\in\calT_h(Q_n)}\bigg[
h_{n'}^{-1}\Big(\N{c_K^{-1}(v-\Pi_{\bp_K^v} v)}_{L^2(K)}^2
+\N{\bsigma-\Pi_{\bp_K^\bsigma}\bsigma}_{L^2(K)^2}^2\Big)\\
&\hspace{8mm}
+h_{n'}\Big(\abs{c_K^{-1}(v-\Pi_{\bp_K^v}v)}_{H^1(I_{n'};L^2(K_\bx))}^2
+\abs{\bsigma-\Pi_{\bp_K^\bsigma}\bsigma}_{H^1(I_{n'};L^2(K_\bx)^2)}^2\Big)
\\
&\hspace{8mm}
+h_{K_\bx}^{-1}\N{\tta_K (v-\Pi_{\bp_K^v} v)}_{L^2(K)}^2
+h_{K_\bx}\abs{\tta_K (v-\Pi_{\bp_K^v} v)}_{L^2(I_{n'};H^1(K_\bx))}^2
+h_{K_\bx}^{-1}
\N{\ttb_K(\bsigma-\Pi_{\bp_K^\bsigma}\bsigma)}_{L^2(K)^2}^2\bigg]
\\
&\quad+\sum_{K=K_\bx\times I_{n'}\in\calT_h^\odot(Q_n)}
h_{K_\bx}\abs{\ttb_K(\bsigma-\Pi_{\bp_K^\bsigma}\bsigma)}_{L^2(I_{n'};H^1(K_\bx)^2)}^2
\\
&\quad+\sum_{K=K_\bx\times I_{n'}\in\calT_h^\angle(Q_n)}
h_{K_\bx}^{1-2\delta_K}\abs{\ttb_K(\bsigma-\Pi_{(0,p_{t,K}^\bsigma)}\bsigma)}_{L^2(I_{n'};H^{1,1}_\udelta(K_\bx)^2)}^2
\bigg)^{1/2}
\\
&\overset{\eqref{eq:L2proj},\eqref{eq:L2projCorner}}\lesssim
\bigg(\sum_{K=K_\bx\times I_{n'}\in\calT_h(Q_n)}\hspace{-8mm}
(h_{n'}^{-1}c_K^{-2}+h_{K_\bx}^{-1}\tta_K^2)
\Big(
h_{n'}^{s_{t,K}^v+1}\abs{v}_{H^{s_{t,K}^v+1}(I_{n'};L^2(K_\bx))}
+h_{K_\bx}^{s_{x,K}^v+1}\abs{v}_{L^2(I_{n'};H^{s_{x,K}^v+1}(K_\bx))}
\Big)^2
\\
&\hspace{4mm}+\sum_{K=K_\bx\times I_{n'}\in\calT_h^\odot(Q_n)}\hspace{-8mm}
(h_{n'}^{-1}+h_{K_\bx}^{-1}\ttb_K^2)
\Big(
h_{n'}^{s_{t,K}^\bsigma+1}\abs{\bsigma}_{H^{s_{t,K}^\bsigma+1}(I_{n'};L^2(K_\bx)^2)}
+h_{K_\bx}^{s_{x,K}^\bsigma+1}\abs{\bsigma}_{L^2(I_{n'};H^{s_{x,K}^\bsigma+1}(K_\bx)^2)}
\Big)^2\\
&\hspace{4mm}+\sum_{K=K_\bx\times I_{n'}\in\calT_h^\angle(Q_n)}
(h_{n'}^{-1}+h_{K_\bx}^{-1}\ttb_K^2)
\Big(
h_{n'}^{s_{t,K}^\bsigma+1}\abs{\bsigma}_{H^{s_{t,K}^\bsigma+1}(I_{n'};L^2(K_\bx)^2)}
+h_{K_\bx}^{1-\delta_K}\abs{\bsigma}_{L^2(I_{n'};H^{1,1}_\udelta(K_\bx)^2)}
\Big)^2  \bigg)^{1/2}\\
 &\lesssim
\bigg(\sum_{K=K_\bx\times I_{n'}\in\calT_h(Q_n)}\hspace{-8mm}
(h_{n'}^{-1}c_K^{-2}+h_{K_\bx}^{-1}\tta_K^2)
\Big(
h_{n'}^{s_{t,K}^v+1}h_{K_\bx}\abs{v}_{H^{s_{t,K}^v+1}(I_{n'};L^\infty(K_\bx))}
+h_{K_\bx}^{s_{x,K}^v+1}\abs{v}_{L^2(I_{n'};H^{s_{x,K}^v+1}(K_\bx))}
\Big)^2
\\ 
 &\hspace{4mm}+\sum_{K=K_\bx\times I_{n'}\in\calT_h^\odot(Q_n)}\hspace{-8mm}
(h_{n'}^{-1}+h_{K_\bx}^{-1}\ttb_K^2)
\Big(
h_{n'}^{s_{t,K}^\bsigma+1}\abs{\bsigma}_{H^{s_{t,K}^\bsigma+1}(I_{n'};L^2(K_\bx)^2)}
+h_{K_\bx}^{s_{x,K}^\bsigma+1}\abs{\bsigma}_{L^2(I_{n'};H^{s_{x,K}^\bsigma+1}(K_\bx)^2)}
\Big)^2\\
&\hspace{4mm}+\sum_{K=K_\bx\times I_{n'}\in\calT_h^\angle(Q_n)}
(h_{n'}^{-1}+h_{K_\bx}^{-1}\ttb_K^2)
\Big(
h_{n'}^{s_{t,K}^\bsigma+1}\abs{\bsigma}_{H^{s_{t,K}^\bsigma+1}(I_{n'};L^2(K_\bx)^2)}
+h_{K_\bx}^{1-\delta_K}\abs{\bsigma}_{L^2(I_{n'};H^{1,1}_\udelta(K_\bx)^2)}
\Big)^2
\bigg)^{1/2},
\end{align*}
where in the last inequality we have used the inclusion $H^{2,2}_\udelta(K_\bx)\subset C^0(\overline{K_\bx})$
(see~\cite[Formula~(2.2)]{BKPi79}, recalled in~\cite[Remark~1.2.2~b)]{TWDiss}), and
$\abs{v}_{H^{s_{t,K}^v+1}(I_{n'};L^2(K_\bx))}\lesssim h_{K_\bx} \abs{v}_{H^{s_{t,K}^v+1}(I_{n'};L^\infty(K_\bx))}$. 
The assertion follows by using again the inclusion $H^{2,2}_\udelta(K_\bx)\subset C^0(\overline{K_\bx})$.
\end{proof}

When the solution exhibits point singularities, either at corners of $\Omega$ or at multimaterial interfaces in the interior of $\Omega$, the convergence rate of discretizations on quasi-uniform families of partitions of $\Omega$ is well known to degrade, due to the limited solution regularity in scales of standard Sobolev spaces.
Local mesh refinement, with the local refinement taking place {in space only}, can restore the largest possible convergence rates.
We describe suitable mesh families in the next section, and derive the corresponding orders of convergence in Proposition~\ref{prop:rough_errorBounds_lrefMeshes}.
We remark that, if the numerical flux coefficients $\alpha^{-1}$ and $\beta$ 
are chosen to be proportional to the ratio between global and local space
mesh size, the estimate in Proposition~\ref{prop:errorBound} guarantees that, under local spatial mesh refinement, the solution remains bounded. 
On the other hand, numerical results in \S\ref{ss:test_1.2} below seem to suggest that this condition is actually not necessary.

\section{Error bounds on locally refined meshes}\label{sec:RefMesXDom}

For each corner $\bc \in \calS$, define the distance
\begin{equation}\label{eq:Rc}
 R_{\bc} := \frac{1}{2} \min_{\bc' \in \calS \backslash \{\bc\}} |\bc - \bc'|
\end{equation}
and let $\dist(K_{\bx}, \bc)$ be the spatial distance 
of the element $K_{\bx}$ from the corner $\bc$.
Assume $p_{x,K}^{\bsigma} = p_{x}^{\bsigma} \in \IN_{0}$ 
for all elements $K$ of any given mesh.

We use the algorithm proposed by Gaspoz and Morin \cite{GaspozMorin} 
to generate the meshes with appropriate grading towards the corners of a polygon:

\begin{defin}[Corner-refined, regular simplicial triangulation of $\Omega$]
In a polygon $\Omega$ with ${\rm diam}(\Omega) \leq 1$, 
a corner-refined, regular simplicial triangulation $\calTspace$
of meshwidth $h_{\bx}\in (0,1]$ 
for spatial polynomial degree $p_{\bx}\geq 1$ and 
corner weights $\{ \delta_{\bc}: \bc \in \calS\}\subset 
[0,1)^{M}$ as in \S\ref{s:Regularity} 
is a regular partition obtained by the following procedure.
Let $\JJ \in \IN_{0}$ be such that for each $\bc\in\calS$
\begin{equation}
2^{- \frac{(1-\delta_{\bc})(\JJ+1)}{p_{x}^\bsigma+1}} 
\leq h_{\bx} 
\lesssim 2^{- \frac{(1-\delta_{\bc})\JJ}{p_{x}^\bsigma+1}}.
\label{eq:numDiscsRefined}
\end{equation}
with the hidden constant being independent of $h_{\bx}$.
Corner-refined, regular simplicial triangulations of $\Omega$ are constructed by regular refinement of the regular, 
simplicial initial triangulation $\calTspaceinit$ 
using Algorithms~\ref{alg:bisection_meshGen} and \ref{alg:bisect_marked}, 
and newest vertex bisection \cite[Section 3.3.1]{MullerPhD}.

\begin{algorithm}[H]
\SetAlgoLined
\KwData{$\calTspaceinit$, $h_{\bx}>0$, $\bdelta \in [0,1)^M$, 
        $p_{x}^{\bsigma} \in \IN_{0}$, $\{R_{\bc}\}_{\bc \in \calS}$,
        $bool\_conforming$}
\KwResult{Mesh $\calTspace$ with local refinement towards corners}
 // Step 1: refine $\calTspaceinit$ until $h_{K_{\bx}} < h_{\bx}\ \forall\ K_{\bx} \in \calTspaceinit$\\
 $\calTspace$ := $\calTspaceinit$\\
 $\cM := \{ K_{\bx} \in \calTspace\ |\ h_{K_{\bx}} > h_{\bx} \}$\\
 \While{$\cM \neq \emptyset $}{
  $\calTspace$ := bisect\_marked($\calTspace$, $\cM$, $bool\_conforming$)\\
  $\cM := \{ K_{\bx} \in \calTspace\ |\ h_{K_{\bx}} > h_{\bx} \}$\\
 }
 // Step 2: local refinement in nested discs\\
 \For {$\bc \in \calS$} {
  $\JJ := \lceil {-\log_2(h_{\bx})\frac{p_{x}^{\bsigma}+1}{1-\delta_{\bc}} - 1} \rceil$\\
  \For {$j=0,1,\ldots,2\JJ+1$} {
    $\cM := \{ K_{\bx} \in \calTspace\ |\ \dist(K_{\bx}, \bc) \leq 2^{-j/2}R_{\bc}\ 
    \land\ h_{K_{\bx}} > h_{\bx}
    2^{-j(p_{x}^{\bsigma}+\delta_{\bc})/(2(p_{x}^{\bsigma}+1))}
    \}$\\
    $\calTspace$ := bisect\_marked($\calTspace$, $\cM$, $bool\_conforming$)\\
  }
 }
\caption{bisection\_tree\_refinement($\calTspaceinit$, $h_{\bx}$, 
				     $\bdelta$, $p_{x}^{\bsigma}$,
				     $R_{\bc}$, $bool\_conforming$)}
\label{alg:bisection_meshGen}
\end{algorithm}

\

\begin{algorithm}[H]
\SetAlgoLined
\KwData{$\calTspaceinit$, $\cM \subset \calTspaceinit$, $bool\_conforming$}
\KwResult{$\calTspacemarked_\JJ$}
 \uIf{\text{bool\_conforming}}{
 $j := 0$\\
 $\cM_{0} := \cM$\\
 \While{$\cM_{j} \neq \emptyset $}{
  \For{$K_{\bx} \in \cM_{j}$} {
   $\calTspacemarked_{j+1}$ := newest\_vertex\_bisection($K_{\bx}, \calTspacemarked_{j}$)
  }
  $\cM_{j+1} := \{ K_{\bx} \in \calTspacemarked_{j+1}\
                 |\ K_{\bx} \text{ has a hanging node} \}$\\
  $j := j+1$
 }
 Set $\JJ := j$
 }
 \Else {
  \For{$K_{\bx} \in \cM$} {
   $\calTspacemarked_{J}$ := newest\_vertex\_bisection($K_{\bx}, \calTspaceinit$)
  }
 }
\caption{bisect\_marked($\calTspaceinit, \cM$, $bool\_conforming$)}
\label{alg:bisect_marked}
\end{algorithm}
\end{defin}

\begin{remark}\label{rmk:Refine}
Algorithms~\ref{alg:bisection_meshGen} and~\ref{alg:bisect_marked} construct 
a spatial mesh of meshwidth at most $h_\bx$ obtained as bisection tree refinement of a regular initial partition $\calT_0^\bx$ of $\Omega$.
For $j\le 2\JJ+1$ and 
$\JJ= \lceil {-\log_2(h_{\bx})\frac{p_{x}^{\bsigma}+1}{1-\delta_{\bc}} - 1} \rceil$, 
the elements at distance not exceeding $2^{-j/2}R_\bc$ (cf. \eqref{eq:Rc}) from the closest vertex $\bc$ have diameter at most $h_{\bx} 2^{-\frac{j(p_{x}^{\bsigma}+\delta_{\bc})}{2(p_{x}^{\bsigma}+1)}}$.
Note that at fixed target meshwidth $h_{\bx}$, $J\uparrow\infty$ as $\delta_{\bc}\uparrow 1$ (cf. Remark \ref{rmk:deltai>0}).
The flag ``{\it bool\_conforming}'' in Algorithm 1 allows to choose between meshes with or without hanging nodes.
\end{remark}

If one knows a priori that, in a neighbourhood of a corner
$\bc_i\in\calS$, the IBVP solution $\vs$ has regularity better than
that associated to the exponent $\delta_i$, then one can choose a
smaller value of $\delta_i$ in~\eqref{eq:numDiscsRefined} and 
in the choice of $J$ in Algorithm 1. 
This does not alter the asymptotic complexity of the algorithm, but allows to use fewer local refinement levels.

\begin{proposition}\label{prop:refinedMeshBounds}
 Let $p_{x}^{\bsigma} \in \IN$, $h_{\bx}>0$, $\bdelta \in [0,1)^M$
 and $\calTspaceinit$ be an initial regular triangulation of the domain $\Omega$.
 For the mesh $\calTspace := \text{\emph{bisection\_tree\_refinement}}
 (\calTspaceinit, h_{\bx}, \bdelta, p_x^{\bsigma}, R_{\bc}, \cdot)$, 
 there holds the following bound:
 \begin{equation}
  \#\calTspace - \#\calTspaceinit \leq C h_{\bx}^{-2},
 \end{equation}
 where the constant $C$ is independent of $h_{\bx}$
 but depends on $\delta$. Furthermore, the constant depends on $\calTspaceinit$ and 
 increases unboundedly if $\delta_{\bc} \to 1$ (see Remark \ref{rmk:deltai>0})
 or if $p_x^{\bsigma} \to \infty$.
\end{proposition}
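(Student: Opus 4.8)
The plan is to count the triangles by localizing the refinement at each corner and summing a geometric series over dyadic distance-annuli. Since $\calS$ is finite, it suffices to bound separately the elements created by Step~1 of Algorithm~\ref{alg:bisection_meshGen} and, for each fixed $\bc\in\calS$, by the corresponding pass of the Step~2 loop. First I would observe that Step~1 refines $\calTspaceinit$ into a shape-regular mesh of uniform meshwidth comparable to $h_{\bx}$; on a two-dimensional domain with $\diam(\Omega)\le1$ this contributes $O(h_{\bx}^{-2})$ elements, the constant being determined by $\calTspaceinit$ and its shape-regularity. All new triangles are produced by newest-vertex bisection, so shape-regularity is preserved throughout and the diameter of any element is comparable to the square root of its area; this area-versus-diameter comparison is what drives the whole count.

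Next I would fix a corner $\bc$ and set the grading exponent $\mu_\bc:=(p_x^\bsigma+\delta_\bc)/(2(p_x^\bsigma+1))$, so that after the $j$-th iteration of the inner loop the elements within distance $2^{-j/2}R_{\bc}$ of $\bc$ have diameter at most $h_{\bx}2^{-j\mu_\bc}$. I then partition the refined neighbourhood into the dyadic annuli
\[
A_j:=\big\{\bx:\ 2^{-(j+1)/2}R_{\bc}<\dist(\bx,\bc)\le 2^{-j/2}R_{\bc}\big\},\qquad j=0,\dots,2\JJ+1,
\]
plus the innermost disc of radius $\approx 2^{-\JJ}R_{\bc}$. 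Because consecutive annuli differ by a single refinement level, the elements meeting $A_j$ have diameter $\approx h_{\bx}2^{-j\mu_\bc}$ while $\abs{A_j}\approx 2^{-j}R_{\bc}^2$, so
\[
\#\{K_\bx:\ \overline{K_\bx}\cap A_j\neq\emptyset\}\lesssim \frac{\abs{A_j}}{(h_{\bx}2^{-j\mu_\bc})^2}
\approx \frac{R_{\bc}^2}{h_{\bx}^2}\,2^{-j(1-2\mu_\bc)}
=\frac{R_{\bc}^2}{h_{\bx}^2}\,2^{-j\frac{1-\delta_\bc}{p_x^\bsigma+1}},
\]
where I used the elementary identity $1-2\mu_\bc=(1-\delta_\bc)/(p_x^\bsigma+1)$.

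The summation over $j$ is the crux. Since $\delta_\bc<1$ the exponent $(1-\delta_\bc)/(p_x^\bsigma+1)$ is strictly positive, so the common ratio $q_\bc:=2^{-(1-\delta_\bc)/(p_x^\bsigma+1)}$ lies strictly below $1$, and
\[
\sum_{j=0}^{2\JJ+1}\#\{K_\bx:\ \overline{K_\bx}\cap A_j\neq\emptyset\}
\lesssim \frac{R_{\bc}^2}{h_{\bx}^2}\sum_{j\ge0}q_\bc^{\,j}
=\frac{R_{\bc}^2}{h_{\bx}^2}\,\frac{1}{1-q_\bc}.
\]
The choice $\JJ\approx -\log_2(h_{\bx})(p_x^\bsigma+1)/(1-\delta_\bc)$ forces $2^{-(2\JJ+1)(1-\delta_\bc)/(p_x^\bsigma+1)}\approx h_{\bx}^2$, so the innermost disc carries only $O(1)$ triangles and is absorbed into the same estimate. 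Summing over the finitely many $\bc\in\calS$ and adding the Step~1 count gives $\#\calTspace-\#\calTspaceinit\le C h_{\bx}^{-2}$ with $C=C(\calTspaceinit,\bdelta,p_x^\bsigma)$; since $1/(1-q_\bc)\to\infty$ as $\delta_\bc\uparrow1$ or $p_x^\bsigma\to\infty$, the constant degenerates exactly as asserted.

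I expect the genuine obstacle to be the conforming case (\texttt{bool\_conforming} true), where eliminating hanging nodes by newest-vertex bisection can refine elements lying outside the marked discs, so that the naive annulus count no longer captures every triangle. Controlling this requires the amortized complexity theory of newest-vertex bisection in two dimensions (Binev--Dahmen--DeVore, and Stevenson), which guarantees that the conforming closure inflates the total element count by at most a constant factor times the cumulative number of \emph{marked} elements; as the latter is already $O(h_{\bx}^{-2})$ by the annulus count, the bound survives. A secondary point I would verify is that the initial labelling of $\calTspaceinit$ is admissible for newest-vertex bisection, so that uniform shape-regularity---hence the area-versus-diameter comparison invoked above---holds with constants independent of $h_{\bx}$.
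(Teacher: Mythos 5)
Your proof is correct, but it takes a genuinely different route from the paper's. The paper's entire proof is a citation: it invokes \cite[Lemma 4.9]{GaspozMorin}, which establishes precisely this complexity bound for conforming meshes produced by this grading strategy, and then disposes of the non-conforming case by noting that such meshes have, asymptotically as $h_{\bx}\to0$, the same number of elements. What you have written is, in effect, a self-contained re-derivation of that cited lemma: the dyadic-annulus decomposition, the area-over-diameter-squared count per annulus, the identity $1-2\mu_{\bc}=(1-\delta_{\bc})/(p_x^{\bsigma}+1)$, and the resulting geometric series with ratio $q_{\bc}<1$ are exactly the mechanism behind the Gaspoz--Morin estimate, and they also exhibit transparently the blow-up of the constant as $\delta_{\bc}\uparrow1$ or $p_x^{\bsigma}\to\infty$, which the paper only asserts. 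Interestingly, the two arguments split the conforming/non-conforming cases in opposite directions: your annulus count applies verbatim to the non-conforming variant (where \texttt{bisect\_marked} touches only the marked elements), and it is the conforming closure that forces you to invoke the amortized complexity theorem for newest-vertex bisection of Binev--Dahmen--DeVore and Stevenson (a result outside the paper's bibliography) together with the observation that the cumulative number of marked elements is itself $O(h_{\bx}^{-2})$ and the admissibility of the initial labelling of $\calTspaceinit$; the paper instead gets the conforming case for free from the citation and handles the non-conforming case with a one-line remark. Your approach buys independence from the external lemma, explicit tracking of how the constant depends on $\calTspaceinit$, $\bdelta$ and $p_x^{\bsigma}$, and a correct identification of where the genuine difficulty sits; the paper's approach buys brevity.
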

\begin{proof}
This complexity estimate has been proved in \cite[Lemma 4.9]{GaspozMorin} for conforming meshes. 
The estimate for non-conforming meshes follows as these have, asymptotically for $h_{\bx} \rightarrow 0$, the same number of elements.
\end{proof}

For solutions with point singularities and for locally refined triangulations of $\Omega$ spatial meshes constructed with Algorithms~\ref{alg:bisection_meshGen} and~\ref{alg:bisect_marked}, we now prove error bounds with the same dependence on the meshwidth $h$ as the bounds proved in Corollary~\ref{cor:smooth} for smooth solutions.

\begin{proposition}\label{prop:rough_errorBounds_lrefMeshes}
Assume that the IBVP solution $(v,\bsigma)$ admits the regularity
in \eqref{eq:vsRegularity2} for some 
smoothness orders $k_x,k_t\in\IN$, $k_t\ge2$ and $\bdelta\in[0,1)^M$.
Furthermore, assume that the space $\bVp\Th$ and 
the local polynomial degrees are defined as in Proposition~\ref{prop:SmoothError}.
Given a spatial meshwidth bound $h_{\bx}>0$ and an initial regular triangulation 
$\calTspaceinit$ of the domain $\Omega$,
the locally refined mesh $\calTspace$ is generated using Algorithm~\ref{alg:bisection_meshGen}. 

In particular, assume that the velocity $c$ is constant in $Q$,
the polynomial degrees are 
$p_{t,K}^v=p_{x,K}^v=p_{t,K}^\bsigma=p_{x,K}^\bsigma=p$,
the numerical flux parameters are chosen as
\[
\alpha|_F = c^{-1} \frac{h_{\bx}}{h_{F_\bx}}\quad\forall F\in\Ftime\cap\FD,
\qquad\quad
\beta|_F = c \frac{h_{F_\bx}}{h_{\bx}}\quad\forall F\in\Ftime\cap\FN
\]
We assume that the space--time mesh satisfies the condition
\begin{equation}\label{eq:xtrho}
h:=\max\Big\{h_{\bx},\; \max_{n'=1,\ldots,n}ch_{n'}\Big\}
\le \rho
\min\Big\{h_{\bx},\; \min_{n'=1,\ldots,n}ch_{n'}\Big\},
\end{equation}
for each discrete time $t_n$ and some $\rho > 1$.
Define $h_t:=\max_{n'=1,\ldots,n}h_{n'}$.
Then 
\begin{align}\nonumber
\frac12\N{c^{-1}(v-v\hp)}_{L^2(\Omega\times\{t_n\})}
                 +\frac12\N{\bsigma-\bsigma\hp}_{L^2(\Omega\times\{t_n\})^2}
                 &\le\abs{\vs-\vsh}\DGQn\\
&\le C(c,v, \bsigma)~h^{\min\{k_t-{\frac12},k_x-\frac12,p+\frac12\}},
\end{align}
where $C(c,v, \bsigma)$ depends on the space--time mesh
$\calT_h$ only through the shape-regularity constant of its spatial elements.
\end{proposition}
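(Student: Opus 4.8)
The plan is to start from the general elementwise estimate of Proposition~\ref{prop:errorBound} and to sum its contributions over the corner-graded mesh $\calTspace$ of Remark~\ref{rmk:Refine}. To avoid a spurious logarithmic factor I would work not from the $\ell^1$ statement of Proposition~\ref{prop:errorBound} but from the sum-of-squares bound~\eqref{eq:DGpBound2} underlying it (Proposition~\ref{prop: DGpBound2}), combined with the projection estimates~\eqref{eq:L2proj} and~\eqref{eq:L2projCorner}. First I would specialise all indices: with $c$ constant and uniform degrees $p_{t,K}^v=p_{x,K}^v=p_{t,K}^\bsigma=p_{x,K}^\bsigma=p$, Corollary~\ref{cor:smooth} fixes $k_{t,K}^v=k_t-2$, $k_{x,K}^v=k_x$, $k_{t,K}^\bsigma=k_t-1$, $k_{x,K}^\bsigma=k_x-1$, so that $s_{x,K}^\bsigma+1=\min\{k_x,p+1\}\le k_x$, and the required elementwise regularity follows by restricting~\eqref{eq:vsRegularity2}.

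The next step is to absorb the numerical-flux weights. Inserting $\alpha|_F=c^{-1}h_\bx/h_{F_\bx}$ and $\beta|_F=c\,h_{F_\bx}/h_\bx$ into the definitions~\eqref{eq:AlphaBetaK} and using $h_{F_\bx}\approx h_{K_\bx}$ yields $\ttb_K\approx(c\,h_{K_\bx}/h_\bx)^{1/2}$ and $\tta_K\approx(c^{-1}h_\bx/h_{K_\bx})^{1/2}$. The decisive consequence is that the $\bsigma$-prefactors collapse to a single scale, $h_{K_\bx}^{-1/2}\ttb_K\approx c^{1/2}h_\bx^{-1/2}$, \emph{independently} of the strongly varying local meshsize $h_{K_\bx}$, while the quasi-uniformity~\eqref{eq:xtrho} gives $h_{n'}^{-1/2}\approx(c/h_\bx)^{1/2}\approx h^{-1/2}$ for every slab. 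Hence every $\bsigma$-prefactor is uniformly comparable to $h^{-1/2}$: this is precisely the purpose of the chosen flux scaling. For the $v$-terms the prefactor $h_{K_\bx}^{-1/2}\tta_K\approx c^{-1/2}h_\bx^{1/2}h_{K_\bx}^{-1}$ does grow towards the corner, but this is compensated by the stronger ($H^{2,2}_\udelta$, weighted-second-derivative) regularity of $v$.

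The core of the argument, and the main obstacle, is the summation of the space-derivative terms over the graded elements of $\calT_h^\odot$. I would decompose each corner neighbourhood into dyadic annuli $A_j=\{r_{j+1}<|\bx-\bc|\le r_j\}$, $r_j=2^{-j/2}R_\bc$, on which the grading forces $h_{K_\bx}\approx h_\bx\,2^{-j(p+\delta_\bc)/(2(p+1))}$ and $\Phi_\udelta\approx r_j^{\delta_\bc}$. On each annulus one rewrites the standard seminorm, for $m=s_{x,K}^\bsigma+1\le k_x$, as $\abs{\bsigma}_{H^m(A_j)}\approx r_j^{-(\delta_\bc+m-1)}\N{\Phi_{\udelta+m-1}D^m\bsigma}_{L^2(A_j)}$, and sums over $j$. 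The crucial point is that the exponent $(p+\delta_\bc)/(2(p+1))$ in~\eqref{eq:numDiscsRefined} is tuned so that, in the critical case $m=p+1$, the accumulated factor $h_{K_\bx}^{2m}r_j^{-2(\delta_\bc+m-1)}$ is \emph{exactly} independent of $j$; the annular contributions then telescope against $\sum_j\abs{\bsigma}^2_{H^{k_x,1}_\udelta(A_j)}\le\abs{\bsigma}^2_{H^{k_x,1}_\udelta(\Omega)}$ and produce the optimal factor $h_\bx^{\min\{k_x,p+1\}}$ with no logarithmic loss. Verifying this balance (and its analogue for $v$) is the technical heart; the supporting approximation and complexity facts are those of Gaspoz--Morin invoked in Proposition~\ref{prop:refinedMeshBounds}.

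Two ingredients then close the estimate. The elements of $\calT_h^\angle$ actually abutting a corner are $O(1)$ in number and, by the same grading, have size $h_{K_\bx}\approx h_\bx^{(p+1)/(1-\delta_\bc)}$, whence $h_{K_\bx}^{1-\delta_K}\approx h_\bx^{p+1}$; carrying the prefactor $h^{-1/2}$ their first-order $H^{1,1}_{\delta_K}$ contribution is $\lesssim h^{-1/2}h_\bx^{p+1}\approx h^{p+1/2}$, hence harmless. The time-derivative terms are summed using~\eqref{eq:xtrho}, which makes $h_{n'}\approx h/c$ uniform, so the elementwise time-seminorms reassemble into the global Bochner seminorms over $(0,t_n)$ controlled by the $C^{k_t-1}$/$C^{k_t}$ regularity in~\eqref{eq:vsRegularity2}; together with the $h^{-1/2}$ prefactor and the extra $h_{K_\bx}$ factor on the $v$-term (via $H^{2,2}_\udelta\subset C^0$) they yield the exponents $\min\{k_t-\tfrac12,p+\tfrac32\}$ and $\min\{k_t-\tfrac12,p+\tfrac12\}$. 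Collecting the four families of terms and taking the minimum of the resulting exponents gives the claimed rate $h^{\min\{k_t-1/2,\,k_x-1/2,\,p+1/2\}}$, the constant depending on the spatial elements only through shape-regularity, with the $\rho$-, degree- and norm-dependence absorbed exactly as in Proposition~\ref{prop:errorBound}.
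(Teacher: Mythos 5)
Your proposal is correct and follows essentially the same route as the paper's proof (itself adapted from Proposition~3.3.9 of \cite{MullerPhD}): specialisation of the indices via Corollary~\ref{cor:smooth}, the flux scaling $\alpha^{-1}|_F=\beta|_F\,c^{-2}= c^{-1}h_{F_\bx}/h_\bx$ collapsing the $\bsigma$-prefactors to $\approx(c/h)^{1/2}$, a dyadic decomposition around each corner whose annuli $A_j$ coincide with the paper's sets $D_{j;\bc}$, the same exponent balance $m(p+\delta_\bc)/(p+1)\ge \delta_\bc+m-1$ for $m\le p+1$ converting unweighted elementwise seminorms into the global $H^{s_x^\bsigma+1,1}_\udelta$ norm, the same treatment of the corner-touching elements via \eqref{eq:numDiscsRefined}, and the same final assembly of exponents. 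The only cosmetic deviation is that you run the weighted annular argument for $v$ as well, whereas the paper handles the $v$-terms directly with $h_{K_\bx}\le h_\bx$ and global norms; this does not change the substance of the argument.
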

\begin{proof}
We adapt the proof of Proposition 3.3.9 in \cite{MullerPhD} to our setting.
For $K_{\bx}\in\calTspace$ and $\bc\in\calS$, define
\begin{align*}
 r_{\bc}(\bx) &:= \abs{\bx - \bc},\quad r_{K_{\bx}; \bc} := \inf_{\bx \in K_{\bx}} r_{\bc}(\bx).
\end{align*}
Without loss of generality, assume $R_{\bc} = 1$ (recall \eqref{eq:Rc}). 
Given $\bc\in\calS$, partition the mesh $\calTspace$ into sets
\begin{align*}
 D_{j;\bc} &:= 
 \left\{ 
 K_{\bx}\in\calTspace : r_{K_{\bx}; \bc} \in \left(2^{-\frac{j+1}{2}}, 2^{-\frac{j}{2}}\right]
 \right\},\quad j=0,1,\ldots,2J+1,\\
 D_{2(J+1);\bc} &:= 
 \left\{ 
 K_{\bx}\in\calTspace : r_{K_{\bx}; \bc} \in [0, 2^{-(J+1)}] 
 \right\},\\
 \calK &:= \calTspace \backslash \left( \bigcup_{\bc\in\calS} \bigcup_{j=0}^{2(J+1)} D_{j;\bc}\right).
\end{align*}
By the construction of Algorithm \ref{alg:bisection_meshGen}, all the elements 
$K_{\bx} \in \left\{ \calK, \{D_{j;\bc}\}_{j=0,\ldots, 2J+1}, D_{2(J+1);\bc} \right\}$
satisfy a quasi-uniformity condition within their respective sets,
that is for some $\hat{\rho} > 1$,
\begin{align}\label{eq:hxhKx}
 &\hat{\rho}^{-1}~\widehat{C}_{K_\bx} h_{\bx} \leq h_{K_{\bx}} \leq \widehat{C}_{K_\bx} h_{\bx}, \quad \text{where}\\
 \widehat{C}_{K_\bx} &= \begin{cases}
1 &\ \forall K_{\bx} \in \calK,\\
C(j,p,\delta_{\bc})
&\ \forall K_{\bx} \in D_{j;\bc},\ j=0,\ldots,2J+1,\\
C(2J+1,p,\delta_{\bc}) &\ \forall K_{\bx} \in D_{2(J+1);\bc},
\end{cases}
\qquad\text{and}\quad C(j,p,\delta_{\bc}) := 2^{\frac{-j(p+\delta_{\bc})}{2(p+1)}}.
\nonumber
\end{align}
Under the assumptions of the current proposition, recalling the definition of $\tta_K$ and $\ttb_K$ in~\eqref{eq:AlphaBetaK}, we have
\[
c_{K} = c,\quad \tta_K^{-1} = \ttb_K =
\left(\frac{c\max h_{F_{\bx}}}{h_\bx}\right)^{1/2}\overset{\eqref{eq:hFhK}}\approx
\left(\frac{ch_{K_{\bx}}}{h_\bx}\right)^{1/2}\quad \forall K \in \calT_h,
\]
where the maximum is taken over the time-like faces of $K$, $\tta_K$
and $\ttb_K$ are defined in~\eqref{eq:AlphaBetaK}. 
Moreover, due to the definition of $h_t$, Step~1 of
Algorithm~\ref{alg:bisection_meshGen} (see also \eqref{eq:hxhKx}), and assumption \eqref{eq:xtrho}, we have
\begin{equation}\label{eq:hNEW}
h_{n'} \leq h_t,\quad h_{K_{\bx}} \leq h_{\bx},
  \quad h_{n'}^{-1}\le \rho c h^{-1},
  \quad h_{\bx}^{-1} \le \rho h^{-1}.
\end{equation}
Additionally, Corollary~\ref{cor:smooth} implies that $\forall K \in \calT_h$,
\begin{align*}
 s_{t}^{v} := s_{t,K}^{v} = \min\{k_t - 2, p\},&\quad s_{x}^{v} = s_{x,K}^{v} = \min\{k_x, p\},\\ 
 s_{t}^{\bsigma} := s_{t,K}^{\bsigma} = \min\{k_t - 1, p\},&\quad s_{x}^{\bsigma} := s_{x,K}^{\bsigma} = \min\{k_x-1, p\}.
\end{align*}
Proceeding as in the proof of Proposition~\ref{prop:errorBound} and
using the information presented above,
\begin{align*}
&\frac12\N{c^{-1}(v-v\hp)}_{L^2(\Omega\times\{t_n\})}
                 +\frac12\N{\bsigma-\bsigma\hp}_{L^2(\Omega\times\{t_n\})^2}
                 \le\abs{\vs-\vsh}\DGQn
\\
&\lesssim
\bigg(\underbrace{\sum_{K=K_\bx\times I_{n'}\in\calT_h(Q_n)}\hspace{-8mm}
     c^{-1}h_\bx^2(h_{n'}^{-1}c^{-1}+h_\bx^{-1})
\Big(h_{t}^{s_{t}^v+1}\abs{v}_{H^{s_{t}^v+1}(I_{n'};H^{2,2}_\udelta(K_\bx))}
     +h_{\bx}^{s_{x}^v}\abs{v}_{L^2(I_{n'};H^{s_{x}^v+1}(K_\bx))}
\Big)^2}_{(\ast)_1}
\\
&\hspace{5mm}+\underbrace{\sum_{K=K_\bx\times I_{n'}\in\calT_h^\odot(Q_n)}\hspace{-8mm}
c(h_{n'}^{-1}c^{-1}+h_\bx^{-1})
\Big(
h_{t}^{s_{t}^\bsigma+1}\abs{\bsigma}_{H^{s_{t}^\bsigma+1}(I_{n'};L^2(K_\bx)^2)}
+h_{K_\bx}^{s_{x}^\bsigma+1}\abs{\bsigma}_{L^2(I_{n'};H^{s_{x}^\bsigma+1}(K_\bx)^2)}
\Big)^2}_{(\ast)_2}\\
&\hspace{5mm}+\underbrace{\sum_{K=K_\bx\times I_{n'}\in\calT_h^\angle(Q_n)}
c(h_{n'}^{-1}c^{-1}+h_\bx^{-1})
\Big(
h_{t}^{s_{t}^\bsigma+1}\abs{\bsigma}_{H^{s_{t}^\bsigma+1}(I_{n'};L^2(K_\bx)^2)}
+h_{K_\bx}^{1-\delta_K}\abs{\bsigma}_{L^2(I_{n'};H^{1,1}_\udelta(K_\bx)^2)}
\Big)^2}_{(\ast)_3}
\bigg)^{1/2}.
\end{align*}
From~\eqref{eq:hNEW}, we have
\[h_{n'}^{-1}c^{-1}+h_\bx^{-1}\le 2\rho h^{-1}.\]

For $\vec{a} = (a_i),\vec{b} = (b_i) \in \ell_2(\IN)$, the triangle inequality in $\ell_2(\IN)$ implies 
$ \sum_{i} (a_i + b_i)^2 
= \| \vec{a} + \vec{b} \|_2^2 
\leq \left(\| \vec{a} \|_2 + \| \vec{b} \|_2\right)^2 $.
Using this inequality, we find
\begin{align*}
(\ast)_1 & =
\sum_{K=K_\bx\times I_{n'}\in\calT_h(Q_n)}
c^{-1}h_\bx^2(h_{n'}^{-1}c^{-1}+h_\bx^{-1})
\Big( h_{t}^{s_{t}^v+1}\abs{v}_{H^{s_{t}^v+1}(I_{n'};H^{2,2}_\udelta(K_\bx))}
     +h_{\bx}^{s_{x}^v}\abs{v}_{L^2(I_{n'};H^{s_{x}^v+1}(K_\bx))} \Big)^2\\
&\lesssim  c^{-1} h\Big( h_{t}^{s_{t}^v+1} \abs{v}_{H^{s_{t}^v+1}((0,t_n); H^{2,2}_\udelta(K_\bx))}
          +h_{\bx}^{s_{x}^v} \abs{v}_{L^2((0,t_n);H^{s_{x}^v+1}(\Omega))}\Big)^2.
\end{align*}

Now, for all $K = K_{\bx} \times I_{n'} \in \calT_h^\angle(Q_n)$, $r_{K_{\bx}; \bc} = 0$, 
which implies that $K_{\bx}$ belongs to the set $D_{2(J+1);\bc}$.
On element $K_{\bx}$, we have $\delta_K = \delta_{\bc}$ and, from \eqref{eq:hxhKx},
$h_{K_{\bx}} \leq h_{\bx}~C(2J+1,p,\delta_{\bc})$.
Then, we write
\begin{align*}
h_{K_\bx}^{1-\delta_{\bc}}\abs{\bsigma}_{L^2(I_{n'};H^{1,1}_\udelta(K_\bx)^2)}
 &\overset{\eqref{eq:hxhKx}}\leq h_{\bx}^{1 - \delta_{\bc}}
  2^{-\frac{(2J+1)(p+\delta_{\bc})(1-\delta_{\bc})}{2(p+1)}}
  \abs{\bsigma}_{L^2(I_{n'};H^{1,1}_\udelta(K_\bx)^2)}\\
 &= h_{\bx}^{1 - \delta_{\bc}}
 ~2^{-\frac{(2(J+1))(p+\delta_{\bc})(1-\delta_{\bc})}{2(p+1)}}
 ~2^{+\frac{(p+\delta_{\bc})(1-\delta_{\bc})}{2(p+1)}}
 \abs{\bsigma}_{L^2(I_{n'};H^{1,1}_\udelta(K_\bx)^2)}\\
  &
    \overset{\eqref{eq:numDiscsRefined}} 
 \leq h_{\bx}^{1 - \delta_{\bc}}
 ~h_{\bx}^{p+\delta_{\bc}}
 ~\underbrace{2^{\frac{(p+\delta_{\bc})(1-\delta_{\bc})}{2(p+1)}}}_{=:\ \tilde{C}(p, \delta_c)}
 \abs{\bsigma}_{L^2(I_{n'};H^{1,1}_\udelta(K_\bx)^2)} \\
  &
    \leq \tilde{C}(p, \delta_{\bc})~h_{\bx}^{p + 1}
 \abs{\bsigma}_{L^2(I_{n'};H^{1,1}_\udelta(K_\bx)^2)}
 \leq \tilde{C}(p, \delta_{\bc})~h_{\bx}^{p + 1}
   \abs{\bsigma}_{L^2(I_{n'};H^{s_x^{\bsigma}+1,1}_\udelta(K_\bx)^2)}\\
  &
    \leq \tilde{C}(p, \delta_{\bc})~h_{\bx}^{s_x^{\bsigma} + 1}
   \abs{\bsigma}_{L^2(I_{n'};H^{s_x^{\bsigma}+1,1}_\udelta(K_\bx)^2)}.  
\end{align*}
Elements $K = K_{\bx} \times I_{n'} \in \calT_h^\odot(Q_n)$ 
may belong to either of the sets 
$\calK$, $D_{2(J+1);\bc}$ and $D_{j;\bc}$, $j=0,\ldots, 2J+1$.
Thus, we compute estimates for each case separately.

\emph{For elements $K_{\bx} \in \calK$}: 
As $1 < r_{K_{\bx}; \bc} < r_{\bc}$
and, from~\eqref{eq:hxhKx}, $h_{K_\bx}\leq  h_{\bx}$, we have
\begin{align*}
 h_{K_\bx}^{s_{x}^{\bsigma}+1}\abs{\bsigma}_{L^2(I_{n'};H^{s_x^{\bsigma}+1} (K_\bx)^2)}
&\leq h_{\bx}^{s_x^{\bsigma}+1}
\abs{\bsigma}_{L^2(I_{n'};H^{s_x^{\bsigma}+1}(K_\bx)^2)} 
 \leq h_{\bx}^{s_x^{\bsigma}+1}
\abs{\bsigma}_{L^2(I_{n'};H^{s_x^{\bsigma}+1,1}_{\udelta}(K_\bx)^2)}.  
\end{align*}
\emph{For elements $K_{\bx} \in D_{2(J+1);\bc}$, $r_{K_{\bx}; \bc} > 0$}: 
Multiplying and dividing $\abs{\bsigma}_{L^2(I_{n'};H^{s_x^{\bsigma}+1} (K_\bx)}$
by the factor $r_{K_{\bx}; \bc}^{\delta_{\bc} + s_x^{\bsigma}}$ 
we write it in weighted spaces,
and further, using $h_{K_{\bx}} \lesssim r_{K_{\bx}; \bc}$
from \cite[Lemma 4.6]{GaspozMorin} we obtain
\begin{align*}
h_{K_\bx}^{s_{x}^{\bsigma}+1}\abs{\bsigma}_{L^2(I_{n'};H^{s_x^{\bsigma}+1}(K_\bx)^2)}
&\lesssim
~r_{K_{\bx}; \bc}^{s_x^{\bsigma}+1}~r_{K_{\bx}; \bc}^{-\delta_{\bc}-s_x^{\bsigma}}
\abs{\bsigma}_{L^2(I_{n'};H^{s_x^{\bsigma}+1,1}_{\udelta}(K_\bx)^2)}\\ 
&\lesssim
~r_{K_{\bx}; \bc}^{1-\delta_{\bc}}
\abs{\bsigma}_{L^2(I_{n'};H^{s_x^{\bsigma}+1,1}_{\udelta}(K_\bx)^2)}
\lesssim 
~2^{-(1-\delta_{\bc})(J+1)}
\abs{\bsigma}_{L^2(I_{n'};H^{s_x^{\bsigma}+1,1}_{\udelta}(K_\bx)^2)}\\
&\overset{\eqref{eq:numDiscsRefined}}
\lesssim
~h_{\bx}^{p+1} \abs{\bsigma}_{L^2(I_{n'};H^{s_x^{\bsigma}+1,1}_{\udelta}(K_\bx)^2)}
\lesssim
~h_{\bx}^{s_{x}^{\bsigma}+1}\abs{\bsigma}_{L^2(I_{n'};H^{s_x^{\bsigma}+1,1}_{\udelta}(K_\bx)^2)}.
\end{align*}
\emph{For elements $K_{\bx} \in D_{j;\bc}$, $j=0,\ldots,2J+1$}: 
From \eqref{eq:hxhKx}, we have 
$h_{K_{\bx}} \leq h_{\bx}~C(j,p,\delta_{\bc})$,
which implies
\begin{align*}
 h_{K_\bx}^{s_{x}^{\bsigma}+1}\abs{\bsigma}_{L^2(I_{n'};H^{s_x^{\bsigma}+1} (K_\bx))}
 &\leq h_{K_\bx}^{s_{x}^{\bsigma}+1}~r_{K_{\bx}; \bc}^{-\delta_{\bc}-s_x^{\bsigma}}
\abs{\bsigma}_{L^2(I_{n'};H^{s_x^{\bsigma}+1,1}_{\udelta}(K_\bx)^2)}\\
 &\overset{\eqref{eq:hxhKx}}\leq h_{\bx}^{s_x^{\bsigma}+1}
 ~\underbrace{2^{-j\frac{(p+\delta_{\bc})(s_x^{\bsigma}+1)}{2(p+1)}}
 ~2^{j\frac{\delta_{\bc}+s_x^{\bsigma}}{2}}}_{\leq 1}
 \abs{\bsigma}_{L^2(I_{n'};H^{s_x^{\bsigma}+1,1}_{\udelta}(K_\bx)^2)}\\
&\leq h_{\bx}^{s_x^{\bsigma}+1}
\abs{\bsigma}_{L^2(I_{n'};H^{s_x^{\bsigma}+1,1}_{\udelta}(K_\bx)^2)}.
\end{align*}
Putting together these estimates and proceeding similarly to
$(\ast)_1$ yields
\begin{align*}
 (\ast)_2 + (\ast)_3 
  \lesssim &\; c h^{-1}
 \Bigg(
   h_t^{s_t^{\bsigma}+1}
   \Big(
    \sum_{K=K_\bx\times I_{n'}\in\calT_h(Q_n)}
    \abs{\bsigma}_{H^{s_{t}^\bsigma+1}(I_{n'};L^2(K_\bx)^2)}^{2}
   \Big)^{1/2}\\
   &
 +\Big(\sum_{K=K_\bx\times I_{n'}\in\calT^{\odot}_h(Q_n)}
    h_{K_{\bx}}^{2(s_x^{\bsigma}+1)} 
    \abs{\bsigma}_{L^2(I_{n'};H^{s_{x}^{\bsigma}+1}(K_\bx)^2)}^{2}\\
    &+\quad
    \sum_{K=K_\bx\times I_{n'}\in\calT^{\angle}_h(Q_n)}
    h_{K_{\bx}}^{2(1-\udelta_{K})} 
    \abs{\bsigma}_{L^2(I_{n'};H^{1,1}_{\udelta}(K_\bx)^2)}^{2}
  \Big)^{1/2}
 \Bigg)^{2}\\
 \lesssim &\;c h^{-1}
 \Big( h_t^{s_t^{\bsigma}+1}\abs{\bsigma}_{H^{s_{t}^\bsigma+1}((0,t_n);L^2(\Omega)^2)}
    +  h_{\bx}^{s_x^{\bsigma}+1} \abs{\bsigma}_{L^2((0,t_n);H_{\udelta}^{s_{x}^{\bsigma}+1,1}(\Omega)^2)}
 \Big)^{2}.
\end{align*}

Finally,
\begin{align*}
&\N{c^{-1}(v-v\hp)}_{L^2(\Omega\times\{t_n\})}
+\N{\bsigma-\bsigma\hp}_{L^2(\Omega\times\{t_n\})^2}\\
&\lesssim
2(c^{-1} h)^{1/2}
\Big( h_{t}^{s_{t}^v+1} \abs{v}_{H^{s_{t}^v+1}((0,t_n); H^{2,2}_\udelta(K_\bx))}
   + h_{\bx}^{s_{x}^v} \abs{v}_{L^2((0,t_n);H^{s_{x}^v+1}(\Omega))}\Big)
  \\&\qquad+
  2(c h^{-1})^{1/2}\Big(h_t^{s_t^{\bsigma}+1}\abs{\bsigma}_{H^{s_{t}^\bsigma+1}((0,t_n);L^2(\Omega)^2)}
    +   h_{\bx}^{s_x^{\bsigma}+1} \abs{\bsigma}_{L^2((0,t_n);H_{\udelta}^{s_{x}^{\sigma}+1,1}(\Omega)^2)}
\Big)\\
&\lesssim
C (c, v, \bsigma)~h^{\min\{k_t-\frac12,k_x-\frac12,p+\frac12\}}.
\end{align*}
\end{proof}

The space--time discrete space considered in Proposition~\ref{prop:rough_errorBounds_lrefMeshes} has dimension 
$$
\dim\big(\bVp\Th\big)=(\#\calT_h) \;3\; \frac{(p+1)(p+2)}2 (p+1),
$$
where $\frac{(p+1)(p+2)}{2}$ is the dimension of the scalar elemental polynomial space in a spatial element, 
$p+1$ is the dimension of the elemental polynomial space in a time interval, 
$3$ is the number of components (i.e.\ $v,\sigma_1,\sigma_2$).
Then, from Proposition~\ref{prop:refinedMeshBounds} and the quasi-uniformity of the temporal partition, we have
$$\dim\big(\bVp\Th\big)=O(p^3 h_\bx^{-2}h_t^{-1})=O(p^3 h^{-3}).$$

\section{Implementation aspects and numerical experiments}
\label{s:implement_numexp}

In all numerical experiments presented in this section the following choices have been adopted.

The partition $\calT_h$ for the space--time domain $Q=\Omega \times I$, 
is generated as a Cartesian product
of meshes $\calT_{h_{\bx}}^{\bx}$ in the spatial domain $\Omega$
and a partition $\calT_{h_t}^{t}$ of the time interval $I = (0,T)$:
\begin{align*}
\calT_h &:=  \calT_{h_{\bx}}^{\bx} \times \calT_{h_t}^{t}
= \{ K = K_{\bx} \times I_n, \text{ for } K_{\bx} \in \calT_{h_x}^{\bx} ,\ I_n \in \calT_{h_t}^{t} \}\;.
\end{align*}
The temporal mesh $\calT_{h_t}^{t}$ is uniform, i.e.\ it consists of $N_t = 2^{l_t}$ 
time steps of equal size $T2^{-l_t}$, for any given temporal refinement level $l_t \in \IN$.
The spatial mesh $\calT_{h_{\bx}}^{\bx}$ is a regular mesh of triangles which are 
shape-regular with respect to the maximum meshwidth of $\calT_{h_{\bx}}^{\bx}$
given by
\[
h_{\bx} := \max_{K_{\bx} \in \calT_{h_{\bx}}^{\bx}}{h_{K_{\bx}}}. 
\]
The wave speed is fixed at $c=1$, 
unless specified otherwise.

The Cartesian tensor-product structure of the space--time mesh allows for an
easy implementation of the space--time DG solver as a space-only DG discretization with
DG time-stepping. We implement this scheme in Python 3.7.6 using the libraries
\texttt{NumPy 1.16.4}, \cite{NumPy}, and \texttt{SciPy 1.2.0}, \cite{SciPy}, 
and FEniCS, \cite{AlnaesBlechta2015a}, for the space-only DG discretization.
The resulting linear systems of equations are solved using the sparse direct solver
\texttt{spsolve}, included in the \texttt{SciPy} submodule \texttt{scipy.sparse.linalg}. 
The library FEniCS is also used to generate the uniform meshes in Test 1.1, 
\S \ref{ss:test_1.1},
and the local bisection-tree refined meshes in Test 1.2, \S \ref{ss:test_1.2}.

In our tests, 
we estimate the order of convergence by
measuring the relative error with respect to the exact solution 
in $L^2(\Omega)$-norm at a given time $t$, indicated by $L^2(\Omega \times \{t\})$-norm.
\subsection{Test 1: smooth solution, \texorpdfstring{domain $\Omega = (0,1)^2$}{square domain}}
\label{ss:test_1.1}
Choose the space--time domain $Q = (0,1)^2 \times (0,1)$.
Consider the exact smooth solution
\begin{subequations}
 \label{eq:test11_exactSol}
 \begin{align}
  u(\bx,t) &= \sin(\pi x_1)\sin(\pi x_2)\sin(\sqrt{2}\pi t),\quad \forall (\bx,t)\in Q,\\
  v = \partial_t u,&\quad \bsigma = -\nabla_{\bx} u,
 \end{align}
\end{subequations}
to the IBVP \eqref{eq:IBVP},
with homogeneous Dirichlet boundary conditions and source $f = 0$.

We use uniform meshes with $h_{\bx} \approx h_{t} =  2^{-l}$, $l \in \IN$.
The convergence rates are given in the Tables 
\ref{tab:test11_convgRatesFG1} and \ref{tab:test11_convgRatesFG2}
for different choices of the stabilization parameters.

The tables show that for the choice $\alpha=\beta=1$ for the DG stabilization parameters 
and for uniform polynomial degrees $p = p_{v}^x = p_{v}^{t} = p_{{\bsigma}}^{t} = p_{{\bsigma}}^{x}$ 
we obtain 
$\N{v-v_h}_{L^2(\Omega\times T)}\approx\N{\bsigma-\bsigma_h}_{L^2(\Omega\times T)^2}=O(h^{p+1})$.
(Recall that here $c=1$: for constant $c\gg1$ or $c\ll1$ we expect that the parameters need to be scaled as $\alpha^{-1}=\beta=c$.)
If we over-penalize the spatial jumps of $v$ by choosing $\alpha=h_{F_\bx}^{-1}$, 
or under-penalize the normal jumps of $\bsigma$ by choosing $\beta=h_{F_\bx}$, or we combine the two strategies, the error in $v$ is not substantially affected, while the accuracy in $\bsigma$ is reduced to $O(h^p)$.
These penalization strategies have previously been proposed for the analogous DG formulation in time-harmonic regime, 
i.e.\ for the Helmholtz equation, both for Trefftz and non-Trefftz schemes, 
see e.g.\ \cite[Table 1]{TrefftzSurvey} and references therein.

If we reduce by one the spatial polynomial degree of the $\bsigma$ component, 
namely we take $p^x_\bsigma=p-1$, reflecting the mismatch in regularities (e.g.\ \eqref{eq:vsRegularity2}), 
we observe (Table~\ref{tab:test11_convgRatesFG2}) a degradation of the convergence order in the case $\alpha=\beta=1$.
With all other choices of parameters considered, 
the order of convergence for $\bsigma$ is apparently not affected by this reduction of $p^x_\bsigma$.

The last columns of Tables \ref{tab:test11_convgRatesFG1} and \ref{tab:test11_convgRatesFG2} show the values of the errors measured in $\abs{\cdot}\DG$ seminorm and the corresponding experimental convergence rates.
For $p^x_\bsigma=p$, we observe convergence rates between $O(h^{p})$ and $O(h^{p+\frac12})$, depending on the choice of the numerical flux parameters, while for $p^x_\bsigma=p-1$ we observe rates between $O(h^{p-\frac12})$ and $O(h^{p})$.
In particular, the result for $\alpha=\beta=1$ and $p^x_\bsigma=p$ demonstrates that the estimate $\abs{\vs-\vsh}\DG\lesssim C\vs h^{p+\frac12}$ of Corollary~\ref{cor:smooth} is sharp for this test case. 
In the same way as for the $L^2(\Omega\times T)$ norms, the choice $\alpha=\beta$ is preferable for $p^x_\bsigma=p$ but not for  $p^x_\bsigma=p-1$.

\begin{table}[htpb]
\begin{center}
\begin{tabular}{l l| l l| c c| c c| c c}
\multicolumn{4}{c|}{}
& \multicolumn{2}{c|}{$v_h$}
& \multicolumn{2}{c|}{$\bsigma_h$}
& \multicolumn{2}{c}{$\abs{\cdot}\DG$}\\
\hline
$p$ & DOFs & $\alpha$ & $\beta$ & Error & Rate & Error & Rate & Error & Rate\\
\hline
   &         & 1            & 1       & 3.1420E-04 & 2.08 & 1.8280E-04 & 2.11 & 1.2097E-02 & 1.51\\
 1 & 9437184 & $h_{F_{\bx}}^{-1}$ & 1       & 2.1087E-04 & 2.20 & 9.8420E-04 & 1.55 & 1.2232E-02 & 1.51\\
   &         & 1            & $h_{F_{\bx}}$ & 5.3481E-04 & 2.13 & 2.3935E-03 & 1.11 & 2.8166E-02 & 1.17\\
   &         & $h_{F_{\bx}}^{-1}$ & $h_{F_{\bx}}$ & 1.1571E-03 & 2.01 & 3.8674E-03 & 1.09 & 4.1202E-02 & 1.06\\
\noalign{\vskip 1mm}
\hline
\noalign{\vskip 1mm}
   &          & 1            & 1       & 3.1949E-06 & 2.97 & 1.2851E-06 & 3.02 & 1.0190E-04 & 2.48\\
 2 & 28311552 & $h_{F_{\bx}}^{-1}$ & 1       & 4.3013E-06 & 2.78 & 5.4307E-05 & 1.90 & 3.9421E-04 & 1.97\\
   &          & 1            & $h_{F_{\bx}}$ & 1.3603E-06 & 3.02 & 7.4015E-06 & 2.26 & 1.7307E-04 & 2.24\\
   &          & $h_{F_{\bx}}^{-1}$ & $h_{F_{\bx}}$ & 2.5958E-06 & 3.01 & 7.4462E-05 & 1.98 & 4.3786E-04 & 2.00\\
\noalign{\vskip 1mm}
\hline
\noalign{\vskip 1mm}
   &          & 1            & 1       & 1.3258E-08 & 4.00 & 6.9321E-09 & 4.00 & 5.0930E-07 & 3.49\\
 3 & 62914560 & $h_{F_{\bx}}^{-1}$ & 1       & 1.5043E-08 & 3.81 & 2.2632E-07 & 2.95 & 1.4669E-06 & 3.05\\
   &          & 1            & $h_{F_{\bx}}$ & 7.2375E-09 & 4.01 & 6.2051E-08 & 3.13 & 1.2966E-06 & 3.14\\
   &          & $h_{F_{\bx}}^{-1}$ & $h_{F_{\bx}}$ & 1.0360E-08 & 4.01 & 3.3719E-07 & 2.99 & 2.2607E-06 & 3.01\\
\end{tabular}
\caption{\small {
Empirical convergence rates of the {full-tensor space--time DG scheme} for \emph{Test 1}, 
as described in \S \ref{ss:test_1.1},
on uniform meshes with $h_{\bx} \approx h_t = 2^{-l}$, $l \in \IN$, and
fix $p = p_{v}^x = p_{v}^{t} = p_{{\bsigma}}^{t} = p_{{\bsigma}}^{x}$.
The column labelled ``DOFs'' shows the total number of degrees of freedom in space--time, and
the columns labelled ``Error'' show the errors in $L^2(\Omega\times\{T\})$-norm and $\abs{\cdot}\DG$ seminorm, for the mesh level $l = 6$.
The columns labelled ``Rate'' show the estimated orders of convergence computed using the mesh levels $l = 4,5,6$.
}}
\label{tab:test11_convgRatesFG1}
\end{center}
\begin{center}
\begin{tabular}{l l| l l| c c| c c| c c}
\multicolumn{4}{c|}{}
& \multicolumn{2}{c|}{$v_h$}
& \multicolumn{2}{c|}{$\bsigma_h$}
& \multicolumn{2}{c}{$\abs{\cdot}\DG$}\\
\hline
$p$ & DOFs & $\alpha$ & $\beta$ & Error & Rate & Error & Rate & Error & Rate\\
\hline
   &         & 1            & 1       & 8.0082E-03 & 0.82 & 4.8002E-02 & 0.96 & 3.2155E-01 & 0.53\\
 1 & 5242880 & $h_{F_{\bx}}^{-1}$ & 1       & 7.4382E-03 & 0.79 & 4.7774E-02 & 0.96 & 3.2314E-01 & 0.53\\
   &         & 1            & $h_{F_{\bx}}$ & 3.9547E-04 & 1.82 & 1.8237E-02 & 1.00 & 4.6940E-02 & 1.06\\
   &         & $h_{F_{\bx}}^{-1}$ & $h_{F_{\bx}}$ & 9.9308E-04 & 1.98 & 1.7012E-02 & 1.02 & 4.9905E-02 & 1.04\\
\noalign{\vskip 1mm}
\hline
\noalign{\vskip 1mm}
   &          & 1            & 1       & 2.8767E-04 & 2.02 & 1.7415E-04 & 2.00 & 3.6441E-03 & 1.53\\
 2 & 18874368 & $h_{F_{\bx}}^{-1}$ & 1       & 2.9474E-05 & 2.92 & 1.9826E-04 & 1.97 & 9.8431E-04 & 2.02\\
   &          & 1            & $h_{F_{\bx}}$ & 1.2984E-04 & 2.06 & 1.6404E-04 & 2.00 & 2.3004E-03 & 1.63\\
   &          & $h_{F_{\bx}}^{-1}$ & $h_{F_{\bx}}$ & 4.0016E-06 & 3.06 & 1.6247E-04 & 2.00 & 8.9843E-04 & 2.00\\
\noalign{\vskip 1mm}
\hline
\noalign{\vskip 1mm}
   &          & 1            & 1       & 3.6718E-06 & 2.99 & 1.1350E-06 & 3.00 & 3.8272E-05 & 2.49\\
 3 & 46137344 & $h_{F_{\bx}}^{-1}$ & 1       & 3.2268E-06 & 2.99 & 1.1832E-06 & 2.99 & 3.2379E-05 & 2.51\\
   &          & 1            & $h_{F_{\bx}}$ & 2.1222E-07 & 3.67 & 1.4085E-06 & 2.93 & 7.4792E-06 & 2.97\\
   &          & $h_{F_{\bx}}^{-1}$ & $h_{F_{\bx}}$ & 5.6900E-08 & 4.00 & 1.1043E-06 & 3.00 & 5.9181E-06 & 2.99\\
\end{tabular}
\caption{\small {
Empirical convergence rates of the {full-tensor space--time DG scheme} for \emph{Test 1}, 
as described in \S \ref{ss:test_1.1},
on uniform meshes with $h_{\bx} \approx h_t = 2^{-l}$, $l \in \IN$, and
fix $p = p_{v}^x = p_{v}^{t} = p_{{\bsigma}}^{t}$ and $p_{{\bsigma}}^{x} = p-1$.
The column labelled ``DOFs'' shows the total number of degrees of freedom in space--time, and
the columns labelled ``Error'' show the errors in $L^2(\Omega\times\{T\})$-norm and $\abs{\cdot}\DG$ seminorm, for the mesh level $l = 6$.
The columns labelled ``Rate'' show the estimated orders of convergence computed using the mesh levels $l = 4,5,6$.
}}
\label{tab:test11_convgRatesFG2}
\end{center}
\end{table}

\begin{table}[htpb]
\begin{center}
\begin{tabular}{c| c| c| c| c}
 & \multicolumn{4}{c}{parameter $J$}\\
\hline
mesh level $l$ & $p_{{\bsigma}}^{x}=0$ & $p_{{\bsigma}}^{x}=1$ & $p_{{\bsigma}}^{x}=2$ & $p_{{\bsigma}}^{x}=3$\\
\hline
 1 & 1 & 2 & 4 & 5 \\
 2 & 2 & 5 & 8 & 11 \\
 3 & 4 & 8 & 13 & 17 \\
 4 & 5 & 11 & 17 & 23 \\
 5 & 7 & 14 & 22 & 29 \\
 6 & 8 & 17 & 26 & 35 \\
\hline
\end{tabular}
\caption{\small The parameters used in Algorithm~\ref{alg:bisection_meshGen} for generating meshes with local refinement for the $\Gamma$-shaped spatial domain 
$\Omega := (-\frac{1}{2},\frac{1}{2})^2\backslash \{(0,\frac{1}{2})\times(-\frac{1}{2},0)\}$ of {\em Test 2}.
Local refinement is performed only at the corner $\bx_{0} = (0,0)^{\top}$.
The cut-off radius $R_{c} = 0.245$ and refinement weight $\delta = 1/3$ (see Remark \ref{rmk:deltai>0}). 
The number of local refinements, given by parameter $J$, are presented for mesh resolutions $h_{\bx} = 2^{-l}$, $l\in\{1,2,\ldots,6\}$, and polynomial degrees $p_{{\bsigma}}^{x} \in \{0,1,2,3\}$.
}
\label{tab:test12_bisMeshData}
\end{center}
\end{table}

\subsection{Test 2: singular solution, \texorpdfstring{$\Gamma$}{Gamma}-shaped domain}
 \label{ss:test_1.2}
Choose the $\Gamma$-shaped spatial domain 
$\Omega := (-\frac{1}{2},\frac{1}{2})^2\backslash \{(0,\frac{1}{2})\times(-\frac{1}{2},0)\}$
and the end time $T=1$.
The polar coordinates centered at the re-entrant corner are
\begin{align*}
r = \N{\bx}_2,\quad \theta = \arctan(x_2/x_1),\quad \forall\ \bx \in \Omega.
\end{align*}
Consider the singular solution
\begin{subequations}
\label{eq:test12_exactSol}
\begin{align}
 u(\bx,t) = r^{\gamma}\sin(\gamma \theta)\sin(\sqrt{2} \pi t),
   &\quad\text{for } \gamma = \frac{2}{3},\quad \forall (\bx,t)\in Q,\\
 v = \partial_t u,&\quad \bsigma = -\nabla_{\bx} u,
\end{align}
\end{subequations}
to the IBVP \eqref{eq:IBVP}, 
with Neumann boundary conditions
and with the prescribed source term
\begin{equation*}
 f(\bx,t) = -2 \pi^2 r^{\gamma} \sin(\gamma \theta)\sin(\sqrt{2}\pi t),\quad \forall (\bx,t)\in Q.
\end{equation*}
The parameters used in Algorithm~\ref{alg:bisection_meshGen} for generating locally refined meshes in this experiment are presented in Table~\ref{tab:test12_bisMeshData} and some examples of these meshes are shown in Figure~\ref{fig:test12_meshes}.
The convergence rates are given in Tables~\ref{tab:test12_convgRatesFG1} (quasi-uniform meshes), \ref{tab:test12_convgRatesFG2} and \ref{tab:test12_convgRatesFG3} 
(locally refined meshes,  $p = p_{v}^t = p_{\bsigma}^{t} = p_{v}^{x}=p_\bsigma^x$ 
and $p = p_{v}^t = p_{\bsigma}^{t} = p_{v}^{x}=p_\bsigma^x+1$, respectively), 
for various choices of the stabilization parameters.

The tables show that for the singular solution \eqref{eq:test12_exactSol}, 
the corner-refined meshes, as described in \S\ref{sec:RefMesXDom}, 
allow the space--time DG-scheme to preserve the 
same orders of convergence as for the smooth-solution case of \S\ref{ss:test_1.1}.

\begin{figure}[htpb]
\begin{center}
\includegraphics[height = 0.24\textwidth,clip, trim=50 60 60 60]{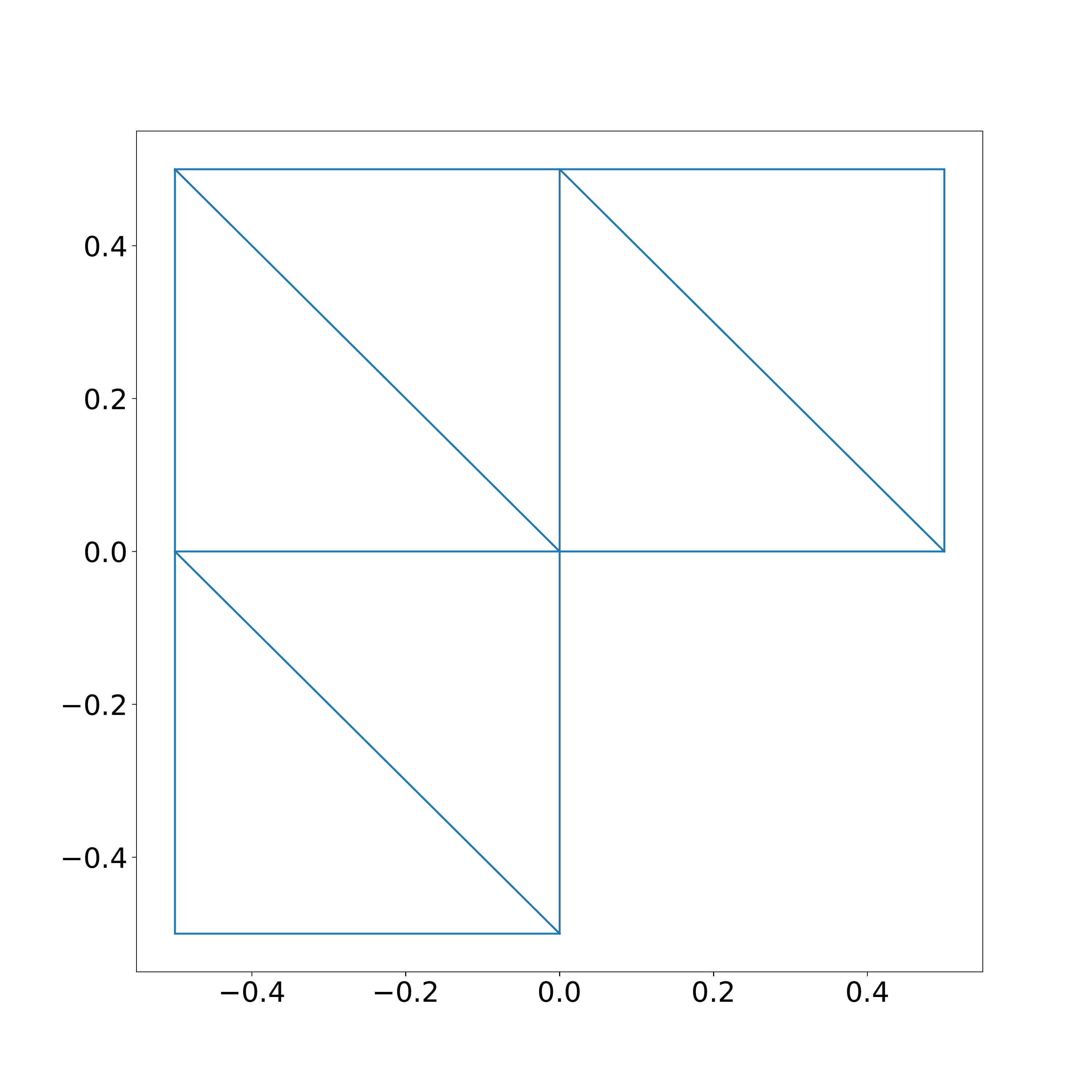}
\includegraphics[height = 0.24\textwidth,clip, trim=50 60 60 60]{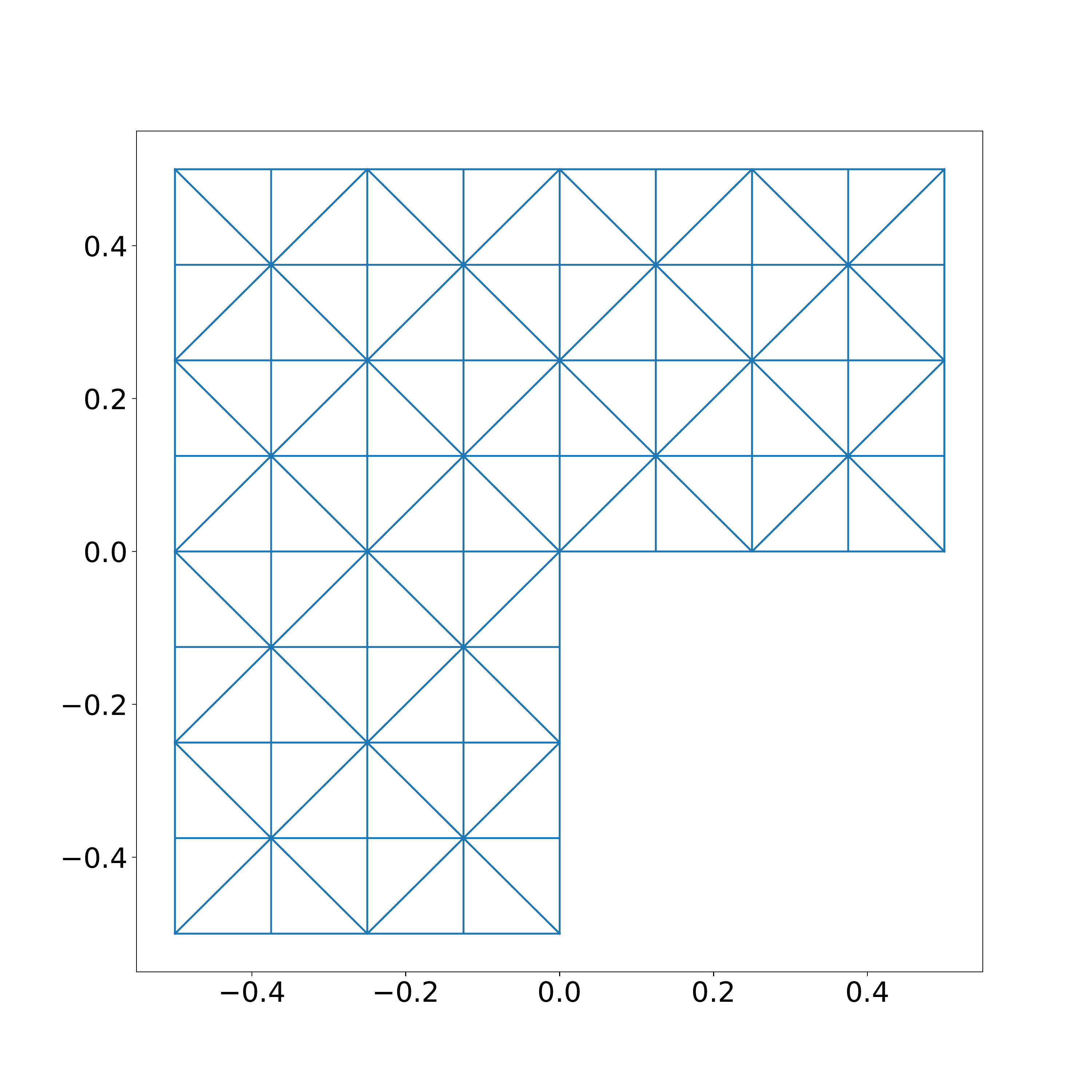}
\includegraphics[height = 0.24\textwidth,clip, trim=50 60 60 60]{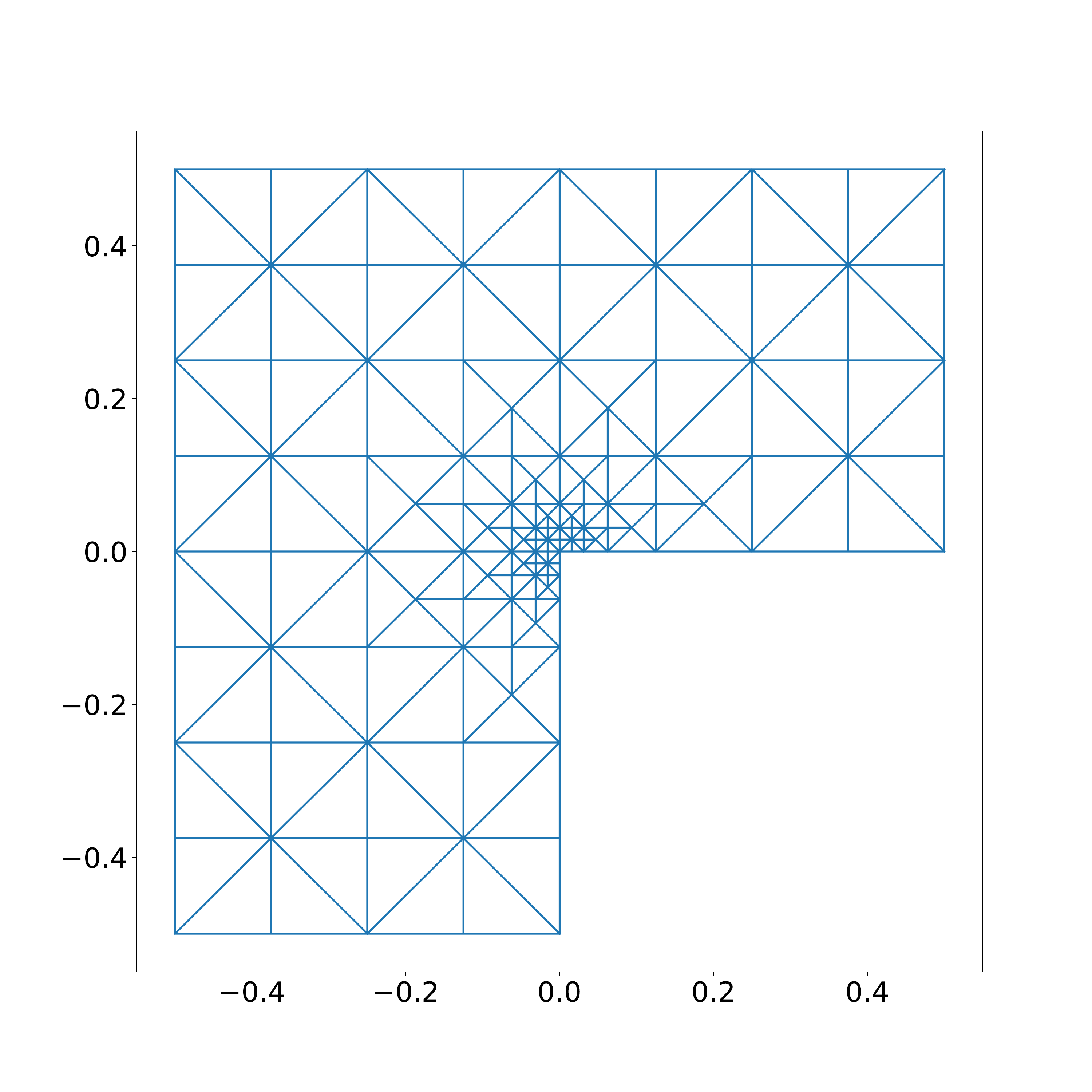}
\includegraphics[height = 0.24\textwidth,clip, trim=50 60 60 60]{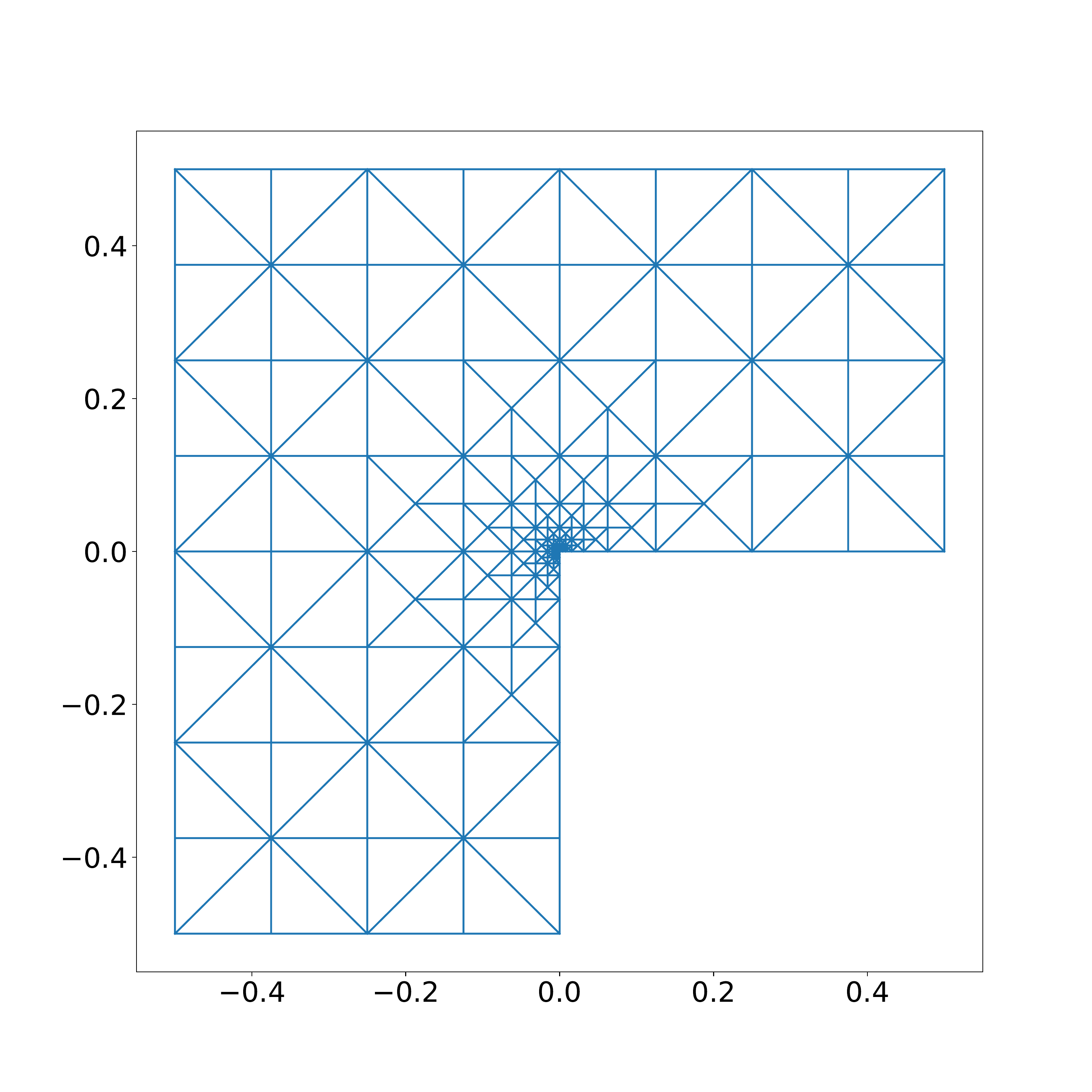}
\caption{\small {$\Gamma$-shaped spatial domain 
$\Omega := (-\frac{1}{2},\frac{1}{2})^2\backslash \{(0,\frac{1}{2})\times(-\frac{1}{2},0)\}$.
 [Column 1] Initial mesh $\calTspaceinit$.
 [Column 2] Uniform mesh for $h_{\bx} = 2^{-2}$.
 Meshes generated from the initial mesh $\calTspaceinit$ using Algorithm \ref{alg:bisection_meshGen} with 
 parameters given in Table~\ref{tab:test12_bisMeshData}, 
 only for the corner $\bx_{0} = (0,0)^{\top}$:
 {[column 3]} $p_x^{\bsigma} = 1$, $h_{\bx} = 2^{-2}$;
 {[column 4]} $p_x^{\bsigma} = 2$, $h_{\bx} = 2^{-2}$.}}
 \label{fig:test12_meshes}
\end{center}
\end{figure}

\begin{table}[htpb]
\begin{center}
\begin{tabular}{l| c c| c c | c c}
& \multicolumn{2}{c|}{$v_h$}
& \multicolumn{2}{c|}{$\bsigma_h$}
& \multicolumn{2}{c}{$\abs{\cdot}\DG$}\\
\hline
$p$ & Error & Rate & Error & Rate & Error & Rate\\
\hline
 1 & 6.5455E-02 & 0.66 & 5.2710E-02 & 0.60 & 3.9363E-02 & 0.69\\
 2 & 4.8317E-02 & 0.67 & 3.9329E-02 & 0.60 & 2.8139E-02 & 0.63\\
 3 & 6.8525E-02 & 0.66 & 5.3033E-02 & 0.59 & 3.6141E-02 & 0.61\\
\hline
\end{tabular}
\caption{\small {
Empirical convergence rates of the {full-tensor space--time DG scheme} for \emph{Test 2}, 
described in \S \ref{ss:test_1.2},
on \textbf{uniform} meshes with $h_{\bx} \approx h_t = 2^{-l}$, $l \in \IN$, 
and with fixed stabilization parameters $\alpha = 1$ and $\beta = 1$,
and polynomial degree 
$p = p_{v}^x = p_{v}^{t} = p_{{\bsigma}}^{t} = p_{{\bsigma}}^{x}$.
The column labelled ``Error'' 
shows the estimated numerical error in $L^2(\Omega\times\{T\})$-norm and $\abs{\cdot}\DG$ seminorm: 
if $p \in \{1,2\}$, it is given for the mesh level $l = 6$, 
else if $p = 3$, for $l = 5$.
The column labelled ``Rate'' shows the estimated convergence rate: 
if $p \in \{1,2\}$, it is computed with the mesh levels $l = 4,5,6$, 
else if $p = 3$, with $l=3,4,5$.
}}
\label{tab:test12_convgRatesFG1}
\end{center}
\begin{center}
\begin{tabular}{l l| l l| c c| c c| c c}
\multicolumn{4}{c|}{}
& \multicolumn{2}{c|}{$v_h$}
& \multicolumn{2}{c|}{$\bsigma_h$}
& \multicolumn{2}{c}{$\abs{\cdot}\DG$}\\
\hline
$p$ & DOFs & $\alpha$ & $\beta$ & Error & Rate & Error & Rate & Error & Rate\\
\hline
   &         & 1            & 1       & 9.9964E-04 & 1.95 & 1.1227E-03 & 1.88 & 1.5032E-03 & 1.91\\
 1 & 9780480 & $h_{F_{\bx}}^{-1}$ & 1       & 1.0349E-03 & 1.94 & 4.0737E-03 & 1.11 & 4.2031E-03 & 1.27\\
   &         & 1            & $h_{F_{\bx}}$ & 9.9909E-04 & 1.95 & 1.3705E-03 & 1.75 & 1.6960E-03 & 1.83\\
   &         & $h_{F_{\bx}}^{-1}$ & $h_{F_{\bx}}$ & 1.0356E-03 & 1.94 & 4.2489E-03 & 1.13 & 4.3733E-03 & 1.27\\
\noalign{\vskip 1mm}
\hline
\noalign{\vskip 1mm}
   &          & 1            & 1       & 1.8713E-05 & 3.00 & 2.4310E-05 & 2.85 & 3.0678E-05 & 2.91\\
 2 & 35541504 & $h_{F_{\bx}}^{-1}$ & 1       & 1.8715E-05 & 3.00 & 2.7236E-05 & 2.78 & 3.3047E-05 & 2.86\\
   &          & 1            & $h_{F_{\bx}}$ & 1.8714E-05 & 3.00 & 2.4686E-05 & 2.84 & 3.0978E-05 & 2.90\\
   &          & $h_{F_{\bx}}^{-1}$ & $h_{F_{\bx}}$ & 1.8718E-05 & 3.00 & 3.5567E-05 & 2.59 & 4.0191E-05 & 2.73\\
\noalign{\vskip 1mm}
\hline
\noalign{\vskip 1mm}
   &          & 1            & 1       & 1.0517E-05 & 4.00 & 1.3890E-05 & 3.79 & 1.7422E-05 & 3.88\\
 3 & 12441600 & $h_{F_{\bx}}^{-1}$ & 1       & 1.0518E-05 & 4.00 & 1.4359E-05 & 3.77 & 1.7799E-05 & 3.87\\
   &          & 1            & $h_{F_{\bx}}$ & 1.0518E-05 & 4.00 & 1.3904E-05 & 3.79 & 1.7434E-05 & 3.88\\
   &          & $h_{F_{\bx}}^{-1}$ & $h_{F_{\bx}}$ & 1.0518E-05 & 4.00 & 1.4555E-05 & 3.76 & 1.7957E-05 & 3.86\\
\end{tabular}
\caption{\small{Empirical convergence rates of the {full-tensor space--time DG scheme} 
for \emph{Test 2}, as described in \S \ref{ss:test_1.2},
on \textbf{locally refined} meshes with 
$h_{\bx} \approx h_t = 2^{-l}$, $l \in \IN$, 
and with fixed polynomial degree 
$p = p_{v}^x = p_{v}^{t} = p_{{\bsigma}}^{t} = p_{{\bsigma}}^{x}$.
The column labelled ``DOFs'' shows the total number of degrees of freedom in space--time, and the column labelled ``Error''  shows the estimated numerical error in $L^2(\Omega\times\{T\})$-norm and $\abs{\cdot}\DG$ seminorm: 
if $p \in \{1,2\}$, they are given for the mesh level $l = 6$, 
else if $p = 3$, for $l = 5$.
The column labelled ``Rate'' shows the estimated convergence rate: 
if $p \in \{1,2\}$, 
it is computed with the mesh levels $l = 4,5,6$, 
else if $p = 3$, with $l=3,4,5$.
}}
\label{tab:test12_convgRatesFG2}
\end{center}
\begin{center}
\begin{tabular}{l l| l l| c c| c c| c c}
\multicolumn{4}{c|}{}
& \multicolumn{2}{c|}{$v_h$}
& \multicolumn{2}{c|}{$\bsigma_h$}
& \multicolumn{2}{c}{$\abs{\cdot}\DG$}\\
\hline
$p$ & DOFs & $\alpha$ & $\beta$ & Error & Rate & Error & Rate & Error & Rate\\
\hline
   &         & 1            & 1       & 2.4743E-02 & 0.92 & 2.6202E-02 & 0.83 & 3.6038E-02 & 0.87\\
 1 & 4147200 & $h_{F_{\bx}}^{-1}$ & 1       & 2.4702E-02 & 0.91 & 2.6246E-02 & 0.83 & 3.6042E-02 & 0.87\\
   &         & 1            & $h_{F_{\bx}}$ & 2.5666E-02 & 0.96 & 2.6355E-02 & 0.85 & 3.6788E-02 & 0.91\\
   &         & $h_{F_{\bx}}^{-1}$ & $h_{F_{\bx}}$ & 2.5740E-02 & 0.96 & 2.4506E-02 & 0.88 & 3.5540E-02 & 0.92\\
\noalign{\vskip 1mm}
\hline
\noalign{\vskip 1mm}
   &          & 1            & 1       & 7.6086E-04 & 2.00 & 8.4941E-04 & 1.85 & 1.1404E-03 & 1.93\\
 2 & 19560960 & $h_{F_{\bx}}^{-1}$ & 1       & 7.5923E-04 & 2.00 & 8.4984E-04 & 1.85 & 1.1396E-03 & 1.93\\
   &          & 1            & $h_{F_{\bx}}$ & 7.5511E-04 & 2.00 & 8.4990E-04 & 1.85 & 1.1369E-03 & 1.93\\
   &          & $h_{F_{\bx}}^{-1}$ & $h_{F_{\bx}}$ & 7.5479E-04 & 2.00 & 8.3428E-04 & 1.86 & 1.1250E-03 & 1.93\\
\noalign{\vskip 1mm}
\hline
\noalign{\vskip 1mm}
   &          & 1            & 1       & 1.6852E-05 & 3.00 & 2.1861E-05 & 2.84 & 2.7602E-05 & 2.91\\
 3 & 57919488 & $h_{F_{\bx}}^{-1}$ & 1       & 1.6821E-05 & 3.00 & 2.1854E-05 & 2.84 & 2.7579E-05 & 2.91\\
   &          & 1            & $h_{F_{\bx}}$ & 1.6732E-05 & 3.00 & 2.1896E-05 & 2.84 & 2.7557E-05 & 2.91\\
   &          & $h_{F_{\bx}}^{-1}$ & $h_{F_{\bx}}$ & 1.6726E-05 & 3.00 & 2.1606E-05 & 2.85 & 2.7324E-05 & 2.91\\
\end{tabular}
\caption{\small{Convergence rates of the {full-tensor space--time DG scheme} for \emph{Test 2}, 
as described in \S \ref{ss:test_1.2},
on \textbf{locally refined} meshes with $h_{\bx} \approx h_t = 2^{-l}$, $l \in \IN$, and
fix $p = p_{v}^x = p_{v}^{t} = p_{{\bsigma}}^{t}$ and $p_{{\bsigma}}^{x} = p-1$.
The column labelled ``DOFs'' shows the total number of degrees of freedom in space--time, and
the column labelled ``Error'' shows the estimated numerical error in $L^2(\Omega\times\{T\})$-norm and $\abs{\cdot}\DG$ seminorm, for the mesh level $l = 6$.
The column labelled ``Rate'' shows the estimated order of convergence 
computed using the mesh levels $l = 4,5,6$.
}}
\label{tab:test12_convgRatesFG3}
\end{center}
\end{table}

\subsection{Test 3: smooth solution, heterogeneous medium}
\label{ss:test_1.3}
In this test case, we simulate the reflection and transmission of a wave 
at the interface of two different media.
We follow the experimental setup provided in \cite[{\S6.7}]{StockerSchoeberl}
and use its results as reference for comparison.

Consider the space--time domain $Q = (0,2)^2 \times (0,1)$. The wavespeed is the piecewise
constant function
\[
    c(\bx) = \begin{cases}
              c_1 = 1,& x_1 \leq 1.2,\\
              c_2 = 3,& x_1 >    1.2.
             \end{cases}
\]
As the initial condition, we take a Gaussian wave given by
\[
    u_{0} = \exp(-\N{\bx - \bx_0}^2/\delta^2), \;\;
    v_{0} = 0, \;\;
    \bsigma_{0} = - \nabla_{\bx} u,
\]
where $\bx_0 = (1,1)^{\top}$ and $\delta = 0.01$. 
We consider homogeneous Dirichlet boundary conditions. 
The computations are performed with polynomial degrees $p_x^{v} = p_{x}^{\bsigma} + 1 = 4$ and $p_t^{v} = p_t^{\bsigma} = 1$. 
The quasi-uniform spatial mesh used in this experiment is shown in Figure~\ref{fig:test13_merged} (center panel).
The uniform temporal mesh has step size $h_t = 2^{-6}$. 
We use approximately $7.35\times10^7$ degrees of freedom in space--time; 
the size of each linear system solved is approximately $1.15\times10^{6}$.

\begin{figure}[htpb]
 \centering
 \includegraphics[width = 0.27\textwidth,clip, trim=0 0 0 0]{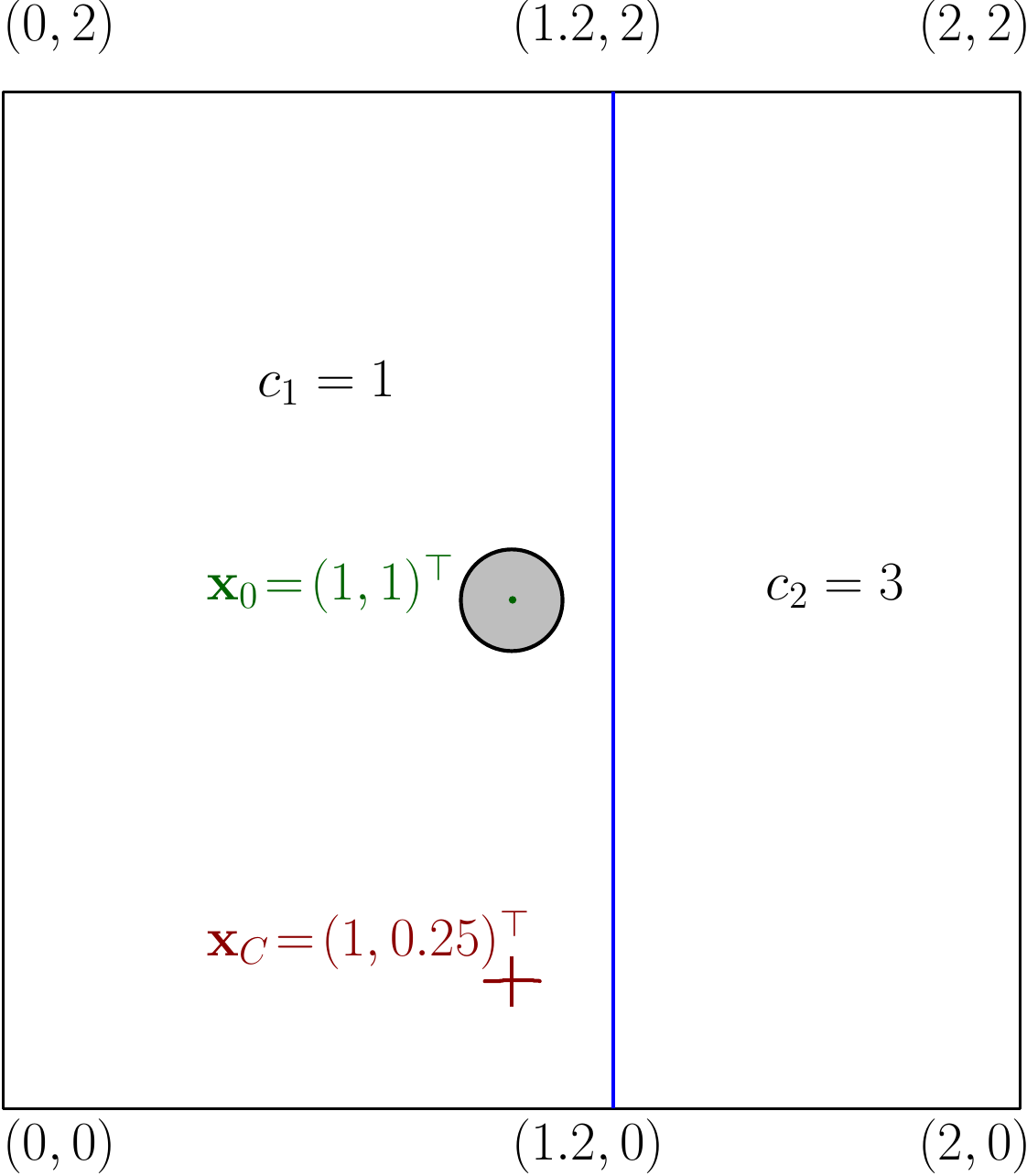}
 \includegraphics[width = 0.31\textwidth,clip, trim=50 70 70 70]{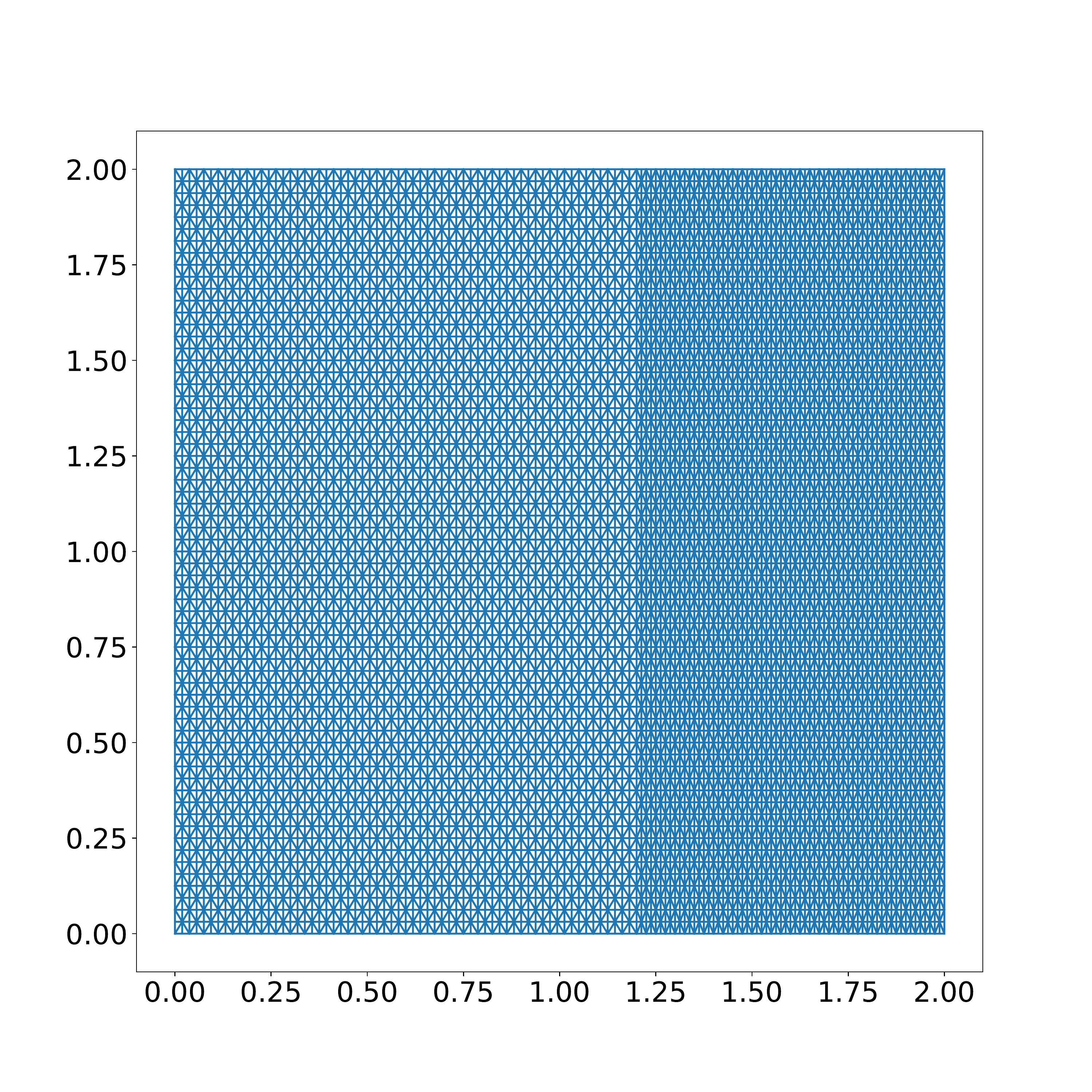}
 \includegraphics[width = 0.40\textwidth,clip, trim=50 50 110 90]{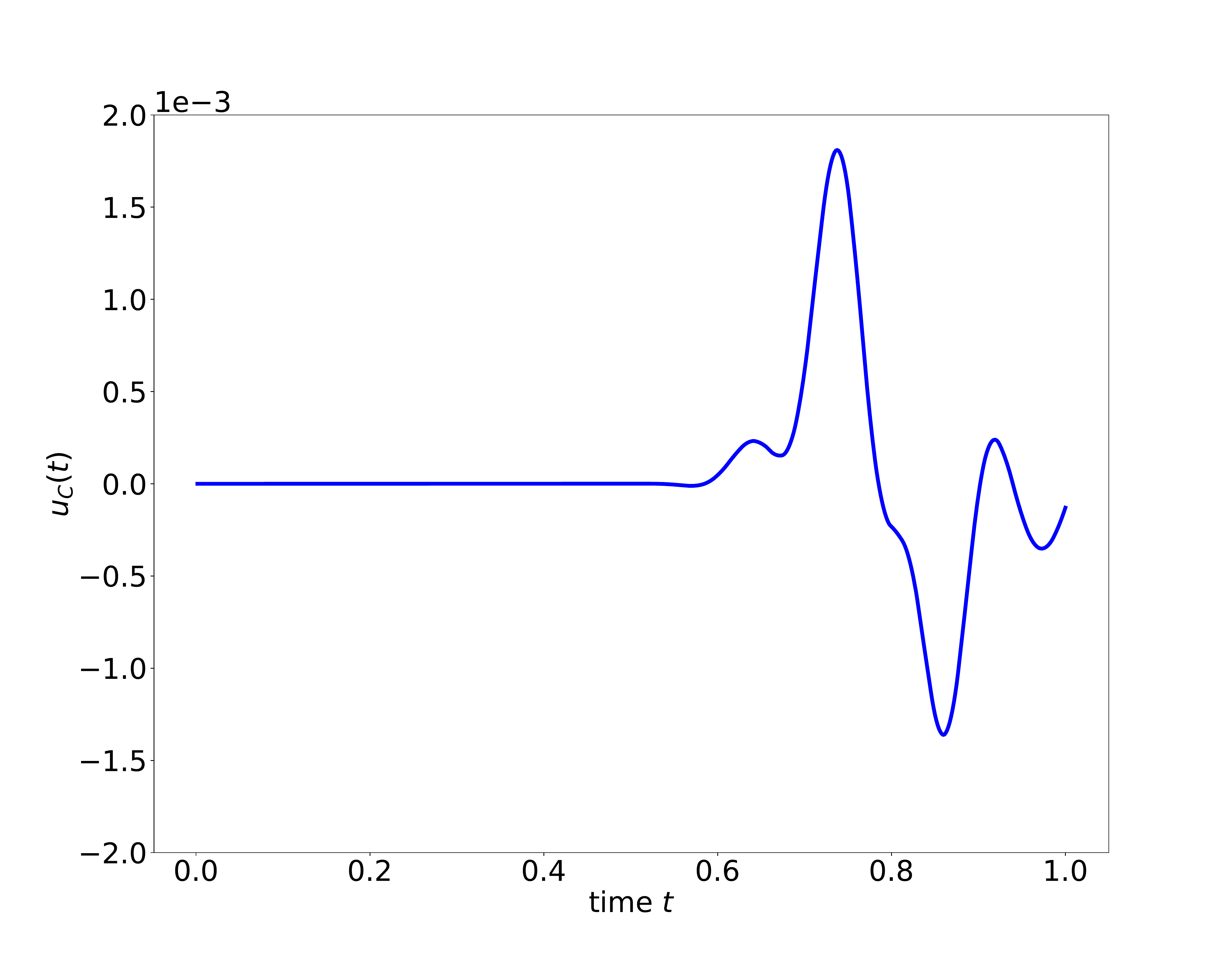}
\caption{\small {[left] Set-up of \emph{Test 3}, \S \ref{ss:test_1.3}:
the grey region indicates the initial Gaussian wave with centre $\bx_0 = (1,1)^{\top}$,
the material interface is at $x_1=1.2$, indicated by the blue line, 
with wave speed $c_1$ to the left and $c_2$ to the right of the interface,
and the point $\bx_{C} = (1, 0.25)^{\top}$ in whose vicinity we measure the wave signal.\newline
[center] The quasi-uniform spatial mesh with meshwidth $h_{\bx} \approx 0.0365$ conforming to the material interface.
\newline
[right]
Solution signal $u_C$ measured on the cell $\Omega_C$ containing the point $\bx_C = (1,0.25)^{\top}$. 
The observed signal is in agreement with the measurements available for the same experiment in \cite{StockerSchoeberl}.}}
\label{fig:test13_merged}
\end{figure}

Snapshots of the solution are shown in Figure~\ref{fig:test13_snapshots}.
First, the initial condition evolves in the left homogeneous medium.
At time $t = 0.2$, the wave
crosses over into the medium with higher wave speed. The snapshot at $t = 0.3$
shows that a part of the incident wave is transmitted across the interface with a higher
speed and a shallow wavefront, and another part is reflected back.
Finally, at $t = 0.4$, we also observe the weaker Huygens wave phenomenon.

Let $\Omega_C$ be the mesh element that contains the point $\bx_{C} = (1,0.25)$.
We measure $v_C(t) = \int_{\Omega_C} v(\bx,t) d\bx$ as a time series signal and compute
an approximation of $u_C(t) = \int_{\Omega_C} u(\bx,t) d\bx$ 
by integrating the signal $v_C(t)$ in time using the trapezoidal rule.
Figure~\ref{fig:test13_merged} (right panel) suggests that we numerically separate the three incoming waves: 
the very weak Huygens wave arrives first, followed by the initial wave and the reflected wave.

These observations are in agreement with the results for the same experimental set-up in \cite[Fig.~11]{StockerSchoeberl}.
The signals are proportional but their amplitudes differ because the mesh elements $\Omega_C$ 
on which they are integrated are different.
They differ also in that here we plot the integral of the signal over $\Omega_C$, 
as opposed to its $L^1(\Omega_C)$ norm, in order to capture the sign change of the reflected wave.

\begin{figure}[htb]
 \begin{minipage}{0.245\textwidth} \centering
 \includegraphics[width = \textwidth, clip, trim = 80 0 80 0]{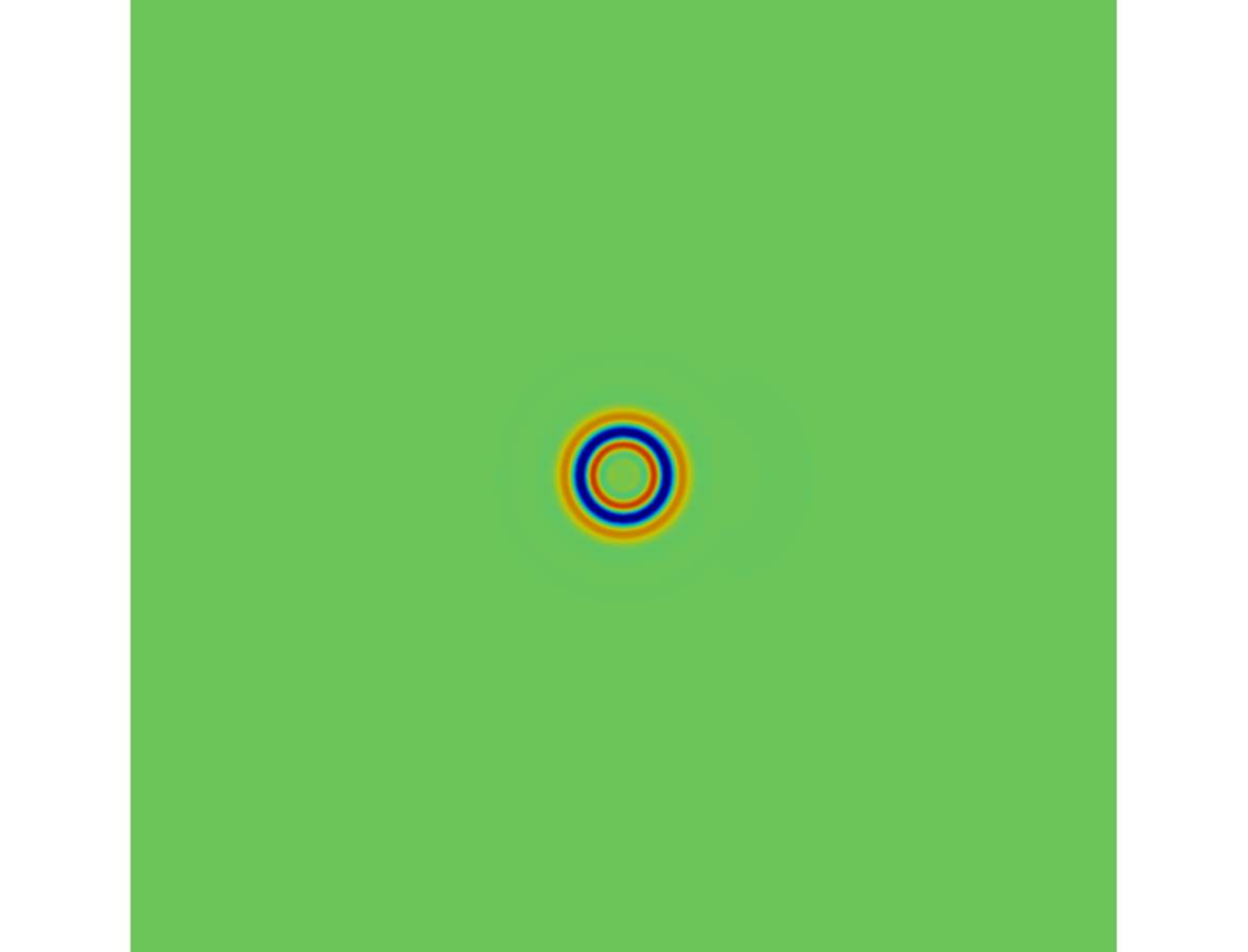}
 \subcaption{\small{$t=0.1$}}
\end{minipage}\begin{minipage}{0.245\textwidth} \centering
 \includegraphics[width = \textwidth, clip, trim = 80 0 80 0]{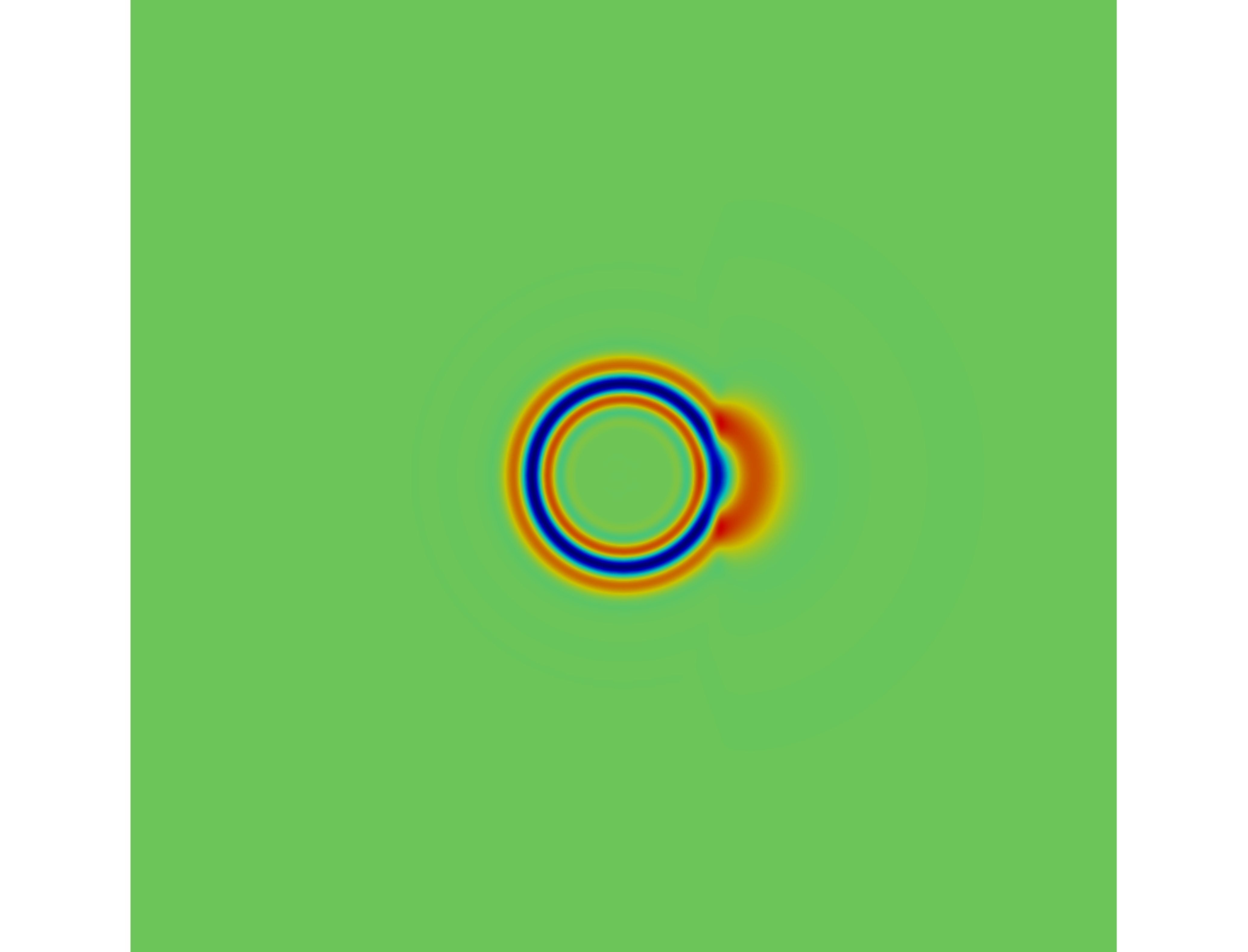}
 \subcaption{\small{$t=0.2$}}
\end{minipage}\begin{minipage}{0.245\textwidth} \centering
 \includegraphics[width = \textwidth, clip, trim = 80 0 80 0]{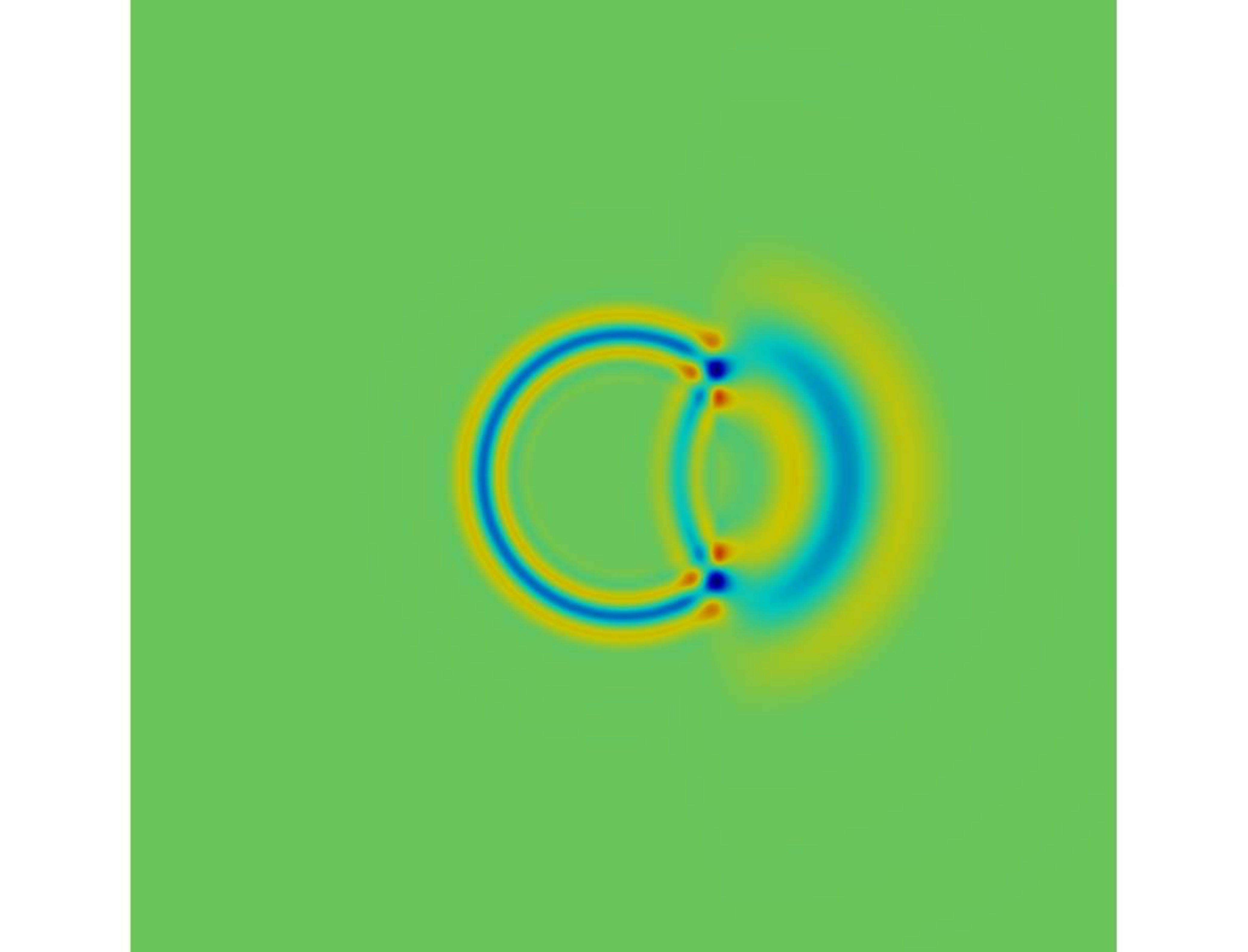}
 \subcaption{\small{$t=0.3$}}
\end{minipage}\begin{minipage}{0.245\textwidth} \centering
 \includegraphics[width = \textwidth, clip, trim = 80 0 80 0]{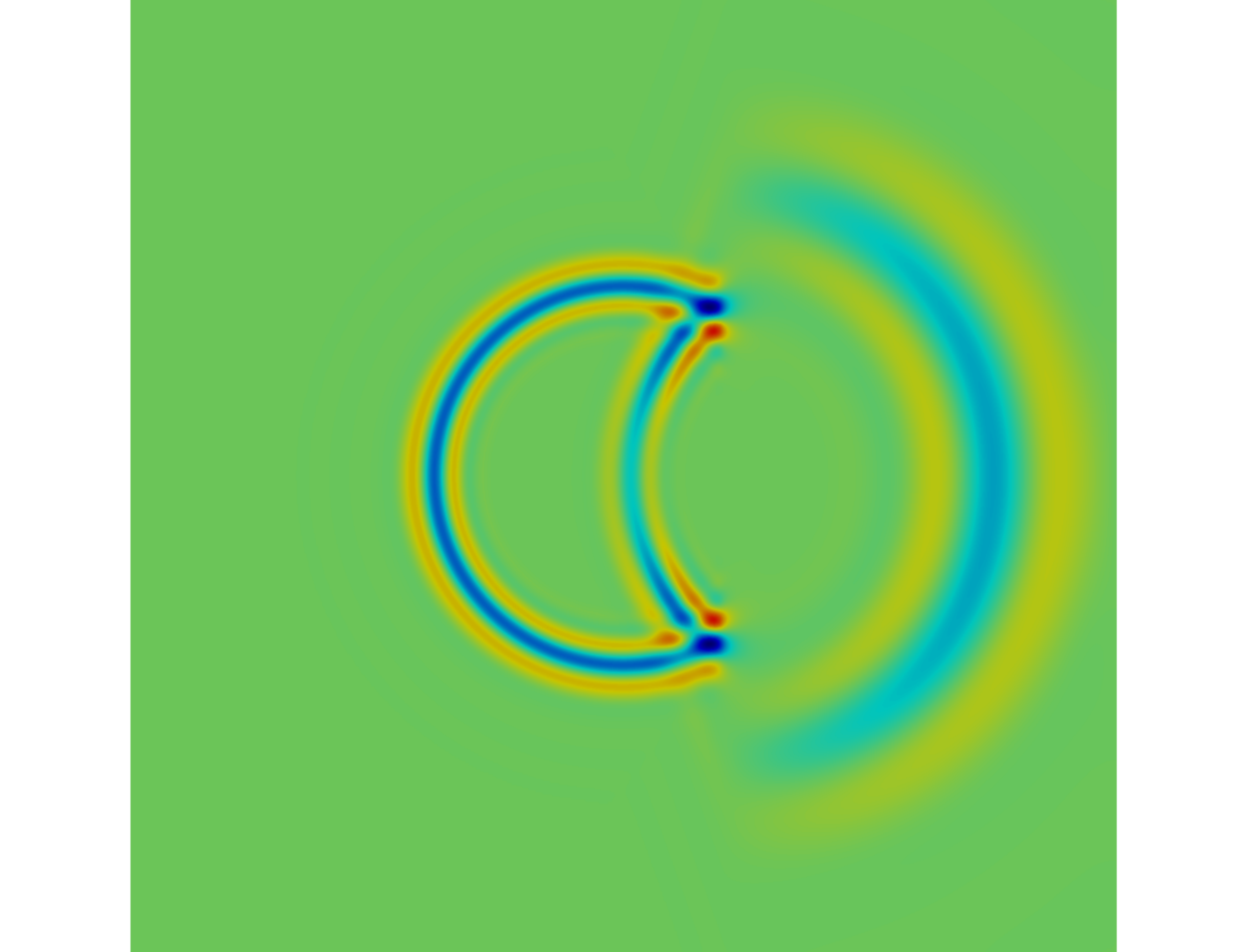}
 \subcaption{\small{$t=0.4$}}
\end{minipage}
 \caption{\small {Wave propagation with the full-tensor DG scheme through a heterogeneous medium, the pressure $v$ is shown at different times. 
 The experimental set-up is described in \emph{Test 3}, \S \ref{ss:test_1.3}.}}
 \label{fig:test13_snapshots}
\end{figure}

\subsection{Test 4: singular solution, heterogeneous medium}
\label{ss:test_1.4}

Consider the space--time domain $Q = (0,2)^2 \times (0,0.3)$. The wavespeed in $\Omega = (0,2)^2$ is the piecewise constant function
\[
    c(\bx) = \begin{cases}
              c_1 = 3,& x_1 >    1.2,\ x_2 >    1,\\
              c_2 = 1,& x_1 \leq 1.2,\ x_2 >    1,\\
              c_3 = 3,& x_1 \leq 1.2,\ x_2 \leq 1,\\
              c_4 = 1,& x_1 >    1.2,\ x_2 \leq 1.
             \end{cases}
\]
As the initial condition, we take a Gaussian wave given by
\[
    u_{0} = \exp(-\N{\bx - \bx_0}^2/\lambda^2), \;\;
    v_{0} = 0, \;\;
    \bsigma_{0} = - \nabla_{\bx} u,
\]
where $\bx_0 = (1,1.125)^{\top}$ and $\lambda = 0.01$. 
We consider homogeneous Dirichlet boundary conditions. 
The computations are performed with polynomial degrees $p_x^{v} = p_{x}^{\bsigma} = 2$ and $p_t^{v} = p_t^{\bsigma} = 1$, and 
the uniform temporal mesh has step size $h_t = 0.3\cdot2^{-4}$.

In this experiment, we use the same quasi-uniform spatial mesh as in Test 3, Figure~\ref{fig:test13_merged}. 
Additionally, we use the locally refined spatial mesh shown in Figure \ref{fig:test14}.
We have approximately $9.5$ million and $16.5$ million degrees of freedom in space--time for the quasi-uniform and locally refined meshes, respectively.
The size of each linear system is approximately $6\times10^5$ for the 
quasi-uniform case and $10^6$ for the locally refined case.
Snapshots of the solution at various times are shown in \autoref{fig:test14_snapshots}.
\begin{figure}[htb]
\centering
\includegraphics[width = 0.30\textwidth, clip, trim=0 0 0 0]{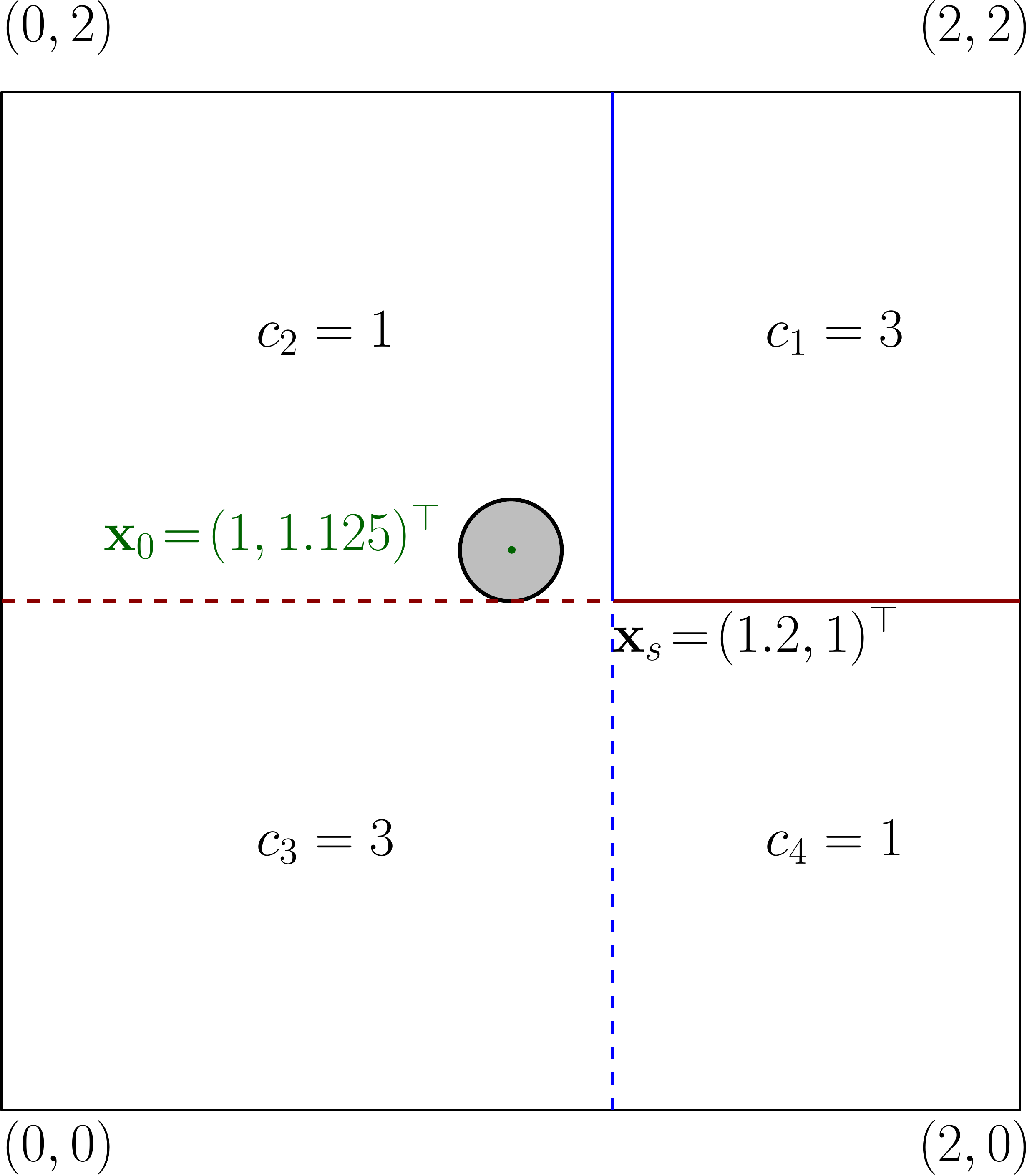}\qquad
\includegraphics[width = 0.38\textwidth, clip, trim=0 70 70 100]{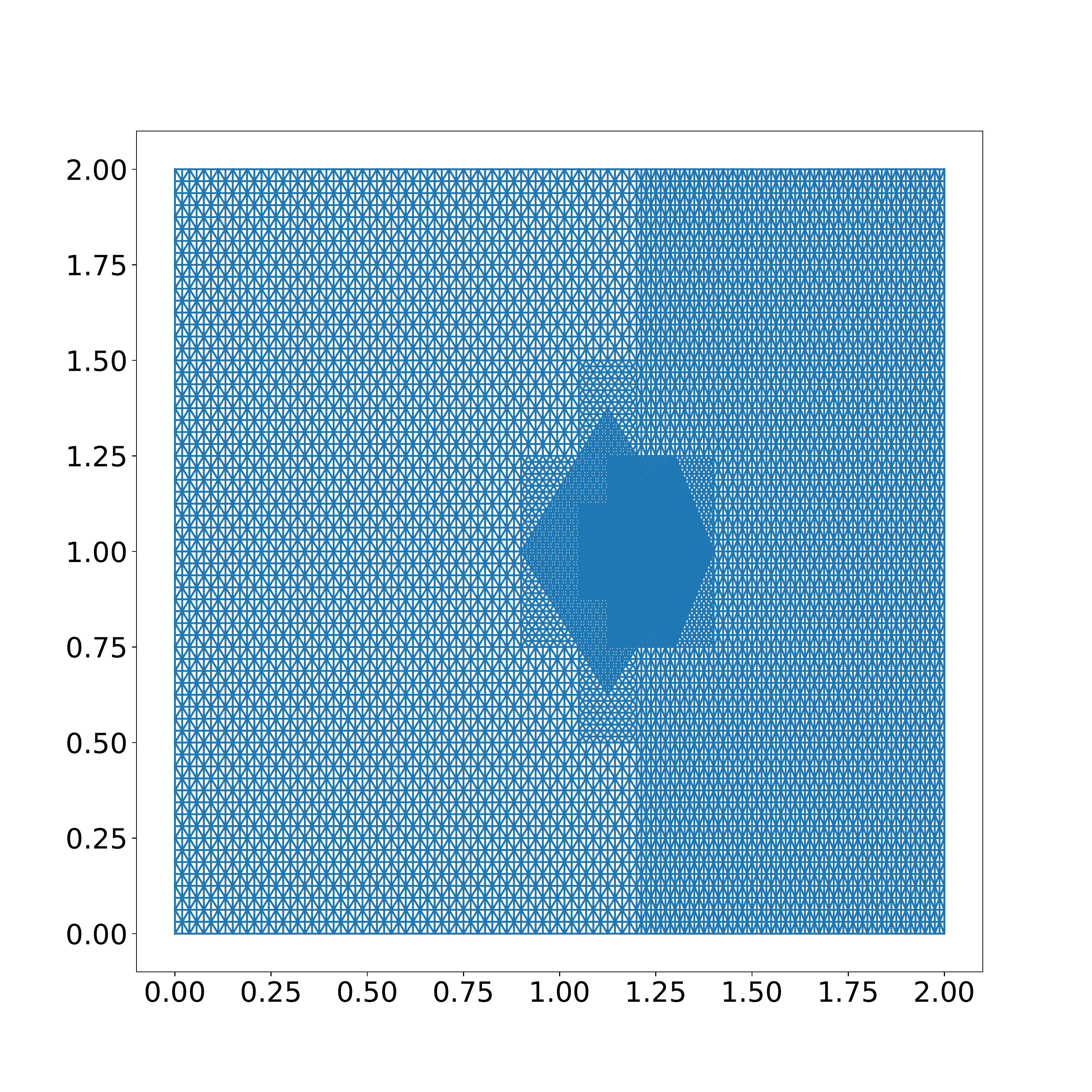}
\caption{\small {[left] Set-up of \emph{Test 4}, \S \ref{ss:test_1.4}:
the grey region indicates the initial Gaussian wave with centre $\bx_0 = (1,1.125)^{\top}$,
the material interfaces are indicated by the blue and brown lines.
[right] The locally refined bisection-tree spatial mesh conforming to the material interfaces, generated using Algorithm \ref{alg:bisection_meshGen} only for the material interface corner $\bx_s = (1.2,1)^{\top}$, with polynomial degree $p_{\bsigma}^{x} = 2$, cut-off radius $R_{c} = 0.392$, refinement weight $\delta = 0.4$ and mesh parameter $h_{\bx}^{\prime} = 0.0625$, which yields the number of refinements $J = 19$. The resultant mesh has maximum meshwidth $h_{\bx} \approx 0.0365$.}}
\label{fig:test14}
\end{figure}
\begin{figure}[htpb]
\begin{center}
\begin{minipage}{0.30\textwidth} \centering
 \includegraphics[width = \textwidth, clip, trim = 80 0 80 0]{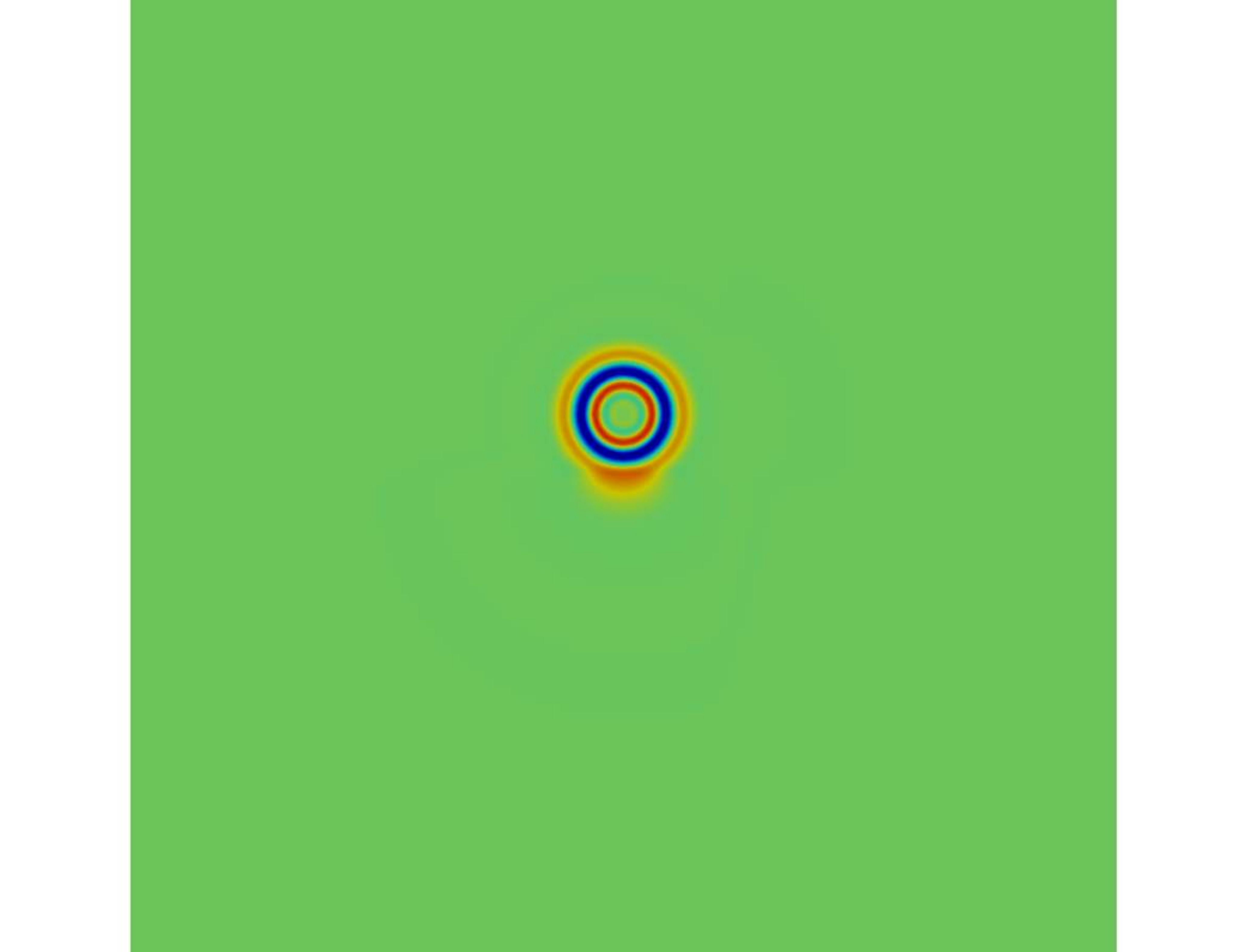}
 \subcaption{$t=0.1$}
\end{minipage}
\begin{minipage}{0.30\textwidth} \centering
 \includegraphics[width = \textwidth, trim = 80 0 80 0]{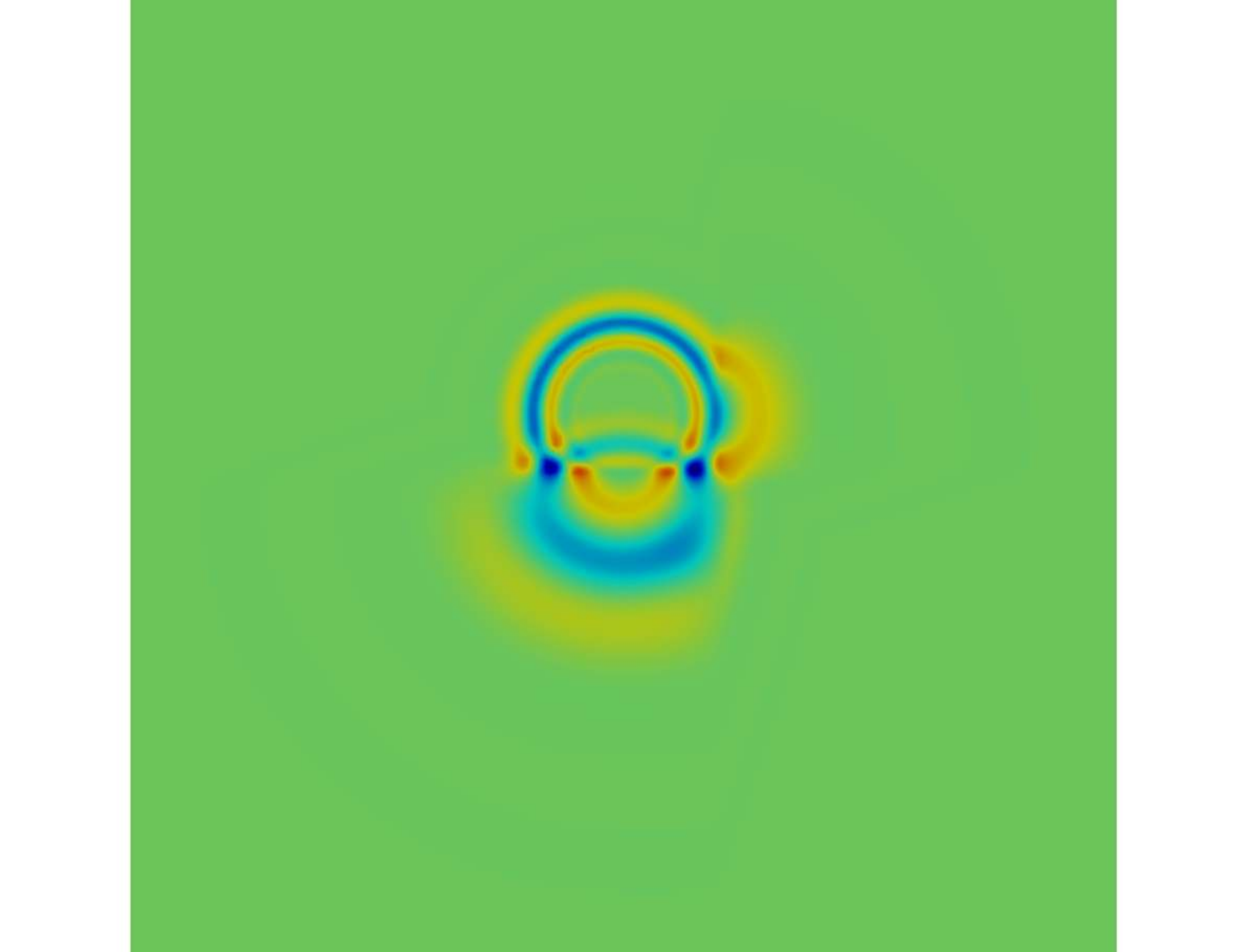}
 \subcaption{$t=0.2$}
\end{minipage}
\begin{minipage}{0.30\textwidth} \centering
 \includegraphics[width = \textwidth, trim = 80 0 80 0]{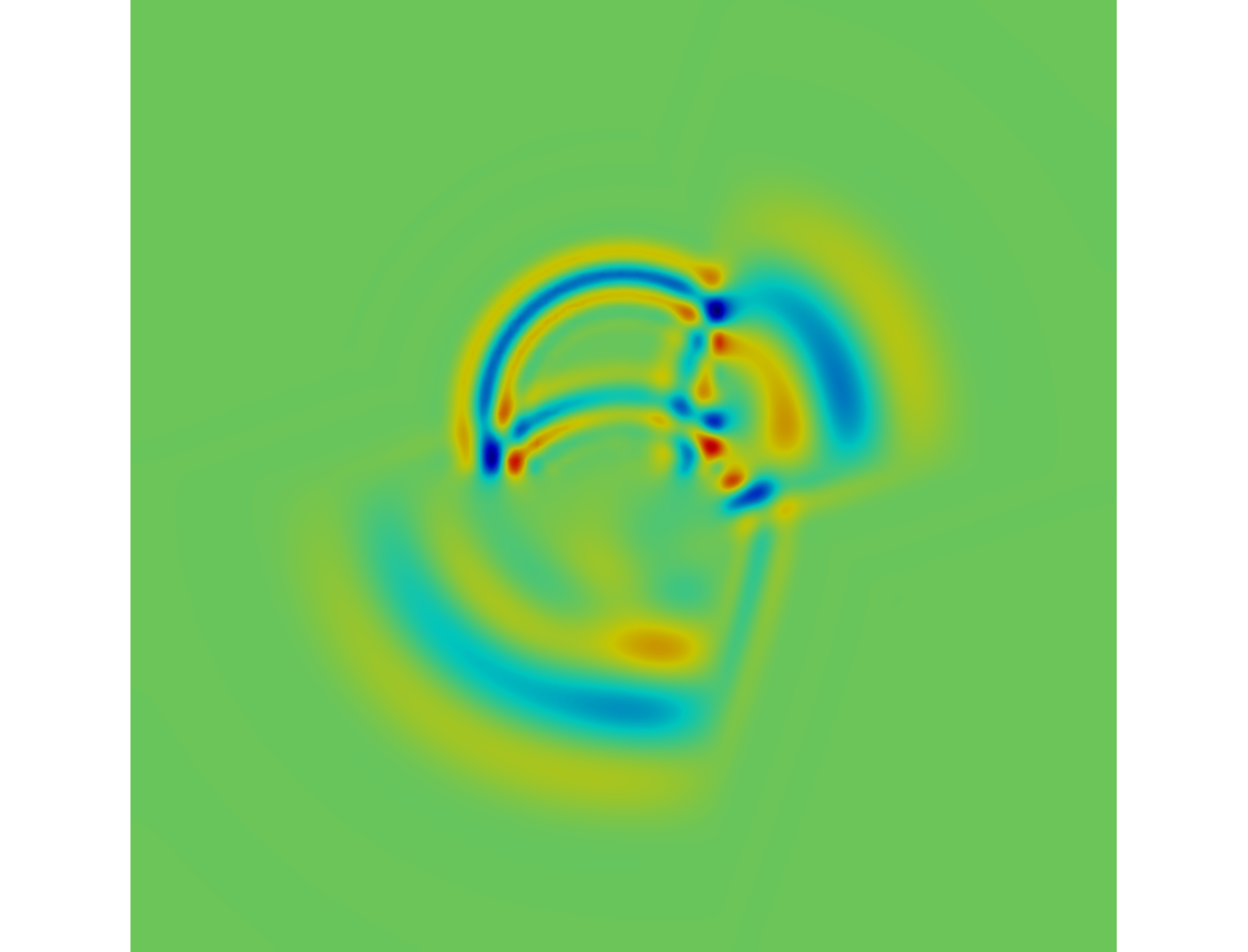}
 \subcaption{$t=0.3$}
\end{minipage}
\caption{\small {Wave propagation with the full-tensor DG scheme through a heterogeneous medium, the pressure $v$ is shown at different times. 
 The experimental set-up is described in \emph{Test 4}, \S\ref{ss:test_1.4}.
 The snapshots are computed with the locally refined mesh shown in Figure \ref{fig:test14}.}}
\label{fig:test14_snapshots}
\end{center}
\end{figure}

\section{Sparse space--time discretization}
\label{sec:SprseXT}
Based on the work of Bungartz, Griebel et al., 
see \cite{sparseBG} and the references therein, 
we develop now a \emph{sparse--tensor space--time} (``sparse-$xt$'' for short) 
approximation based on the above DG formulation and 
on \emph{anisotropic} tensorized finite element spaces in the space--time cylinder $Q$.
The sparse-$xt$ DG solution will be computed by (independent) numerical
solution of several \emph{anisotropic space--time} 
DG discretizations with different combinations of space and time steps,
given by the so-called \emph{combination formula}, equation~\eqref{eq:xtCombForm}.
Subsequently, 
these anisotropic, full--tensor space--time numerical approximations
will be combined into the sparse-$xt$ approximation.
As we shall show, this approximation will afford the 
same asymptotic order of convergence as the full-$xt$ approximation,
with the total number of degrees of freedom in the entire space--time
domain proportional to that of one time step on the finest spatial grid. 
\subsection{Combination formula}
\label{sec:CombForm}
To describe the sparse-$xt$ discretization, 
we shall require the following {notation}.
We denote the discretization ``levels'' (corresponding, roughly, to 
generations in bisection-tree refinements in the spatial domain 
$\Omega$ and the time-interval $I$)
by $l_x$ and $l_t \in \IN_0$, respectively.
We tag the spatial and temporal discretization levels in the index pairs
\begin{equation*}
\bll := (l_x, l_t)^{\top}\ \in \IN_0^2.
\end{equation*}
We introduce the total order $\abs{\bll} = l_x + l_t$.
The indices $l_x, l_t$ represent the resolution level in space and in time, 
respectively.
We let $L_x, L_t, L_{0,x}, L_{0,t}\in\IN_0$ be such that 
\begin{equation*}
 L_x - L_{0,x} = L_t - L_{0,t},
\end{equation*}
and define the vectors
\begin{align*}
  \Uu{L} = (L_x, L_t)^{\top}\ \in \IN_0^2, \quad 
  \Uu{L}_0 = (L_{0,x}, L_{0,t})^{\top}\ \in \IN_0^2,
 \end{align*}
and the set of admissible indices
\begin{equation}
\Xi_{(\Uu{L}, \Uu{L}_0)} 
:= 
\big\{\bll \in \IN_0^2 : l_x \geq L_{0,x},\ l_t \geq L_{0,t},\
\abs{\bll} \in \{L_x + L_{0,t},\ L_x + L_{0,t} - 1\}\big\}.
\label{eq:combForm_indexSet}
\end{equation}
The indices $L_x, L_t$ and $L_{0,x}, L_{0,t}$ 
represent maximum and minimum resolution levels in space and in time, respectively.

\begin{figure}[htb]\centering
\begin{tikzpicture}[scale=.7]
\draw[thick,->](0,0)--(7,0); \draw(7.5,0)node{$l_x$};
\draw[thick,->](0,0)--(0,7); \draw(-.5,7)node{$l_t$};
\draw[thick](0,5)--(5,0);
\draw[thick](0,6)--(6,0)--(6,-.2);
\draw(6,-.5)node{$L_{x}+L_{0,t}$};
\draw[thick,fill](3,2)circle(.15);\draw[thick,fill](4,2)circle(.15);
\draw[thick,fill](2,3)circle(.15);\draw[thick,fill](3,3)circle(.15);
\draw[thick,fill](1,4)circle(.15);\draw[thick,fill](2,4)circle(.15);\draw[thick,fill](1,5)circle(.15);
\draw[thick,dashed](1,-.2)--(1,6);\draw(1,-.5)node{$L_{0,x}$};
\draw[thick,dashed](-.2,2)--(6,2);\draw(-.6,2)node{$L_{0,t}$};
\draw[thick,dashed](4,-.2)--(4,2);\draw(4,-.5)node{$L_x$};
\draw[thick,dashed](-.2,5)--(1,5);\draw(-.6,5)node{$L_t$};
\end{tikzpicture}
\caption{The indices in the set $\Xi_{(\bL,\bL_0)}$ as in \eqref{eq:combForm_indexSet}, with $L_{0,x}=1$, $L_{0,t}=2$, $L_x=4$, $L_t=5$.}
\end{figure}
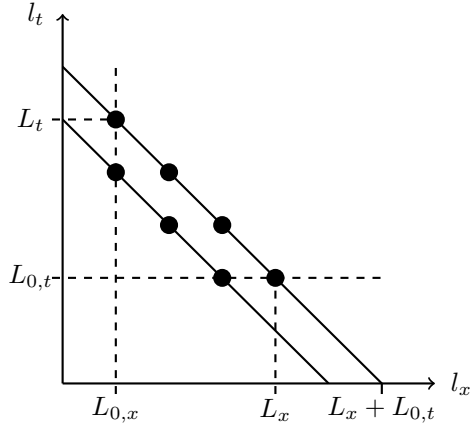

Let $h_{l_x,x}$, $h_{l_t,t} \in \IR^{+}$ and $h_{0,x}, h_{0,t} \in \IR^{+}$ 
be such that
\begin{align*}
 h_{l_x,x} =& 2^{-l_x} h_{0,x},\;\; h_{l_t,t} = 2^{-l_t} h_{0,t}.
\end{align*}
A mesh $\calT_{h}^{\bll}$ has 
meshwidth bound $h_{l_x,x}$ in the spatial variable 
and $h_{l_t,t}$ with respect to the time variable.
Assume that $(\calT_{h}^{\bll})_{l_x=L_{0,x}}^{L_x}$ for a fixed $h_{l_t,t}$, and 
vice-versa $(\calT_{h}^{\bll})_{l_t=L_{0,t}}^{L_t}$ for a fixed $h_{l_x,x}$, form a nested hierarchy of space--time meshes.
For 
$(v,\bsigma)\in L^2\big(I; H^{2,2}_\udelta\OO\big)\times L^2\big(I; H^{1,1}_\udelta\OO^2\big)$
and its discrete approximation 
$(v_h^{\bll},\bsigma_h^{\bll}) \in \bV_\bp(\calT_h^{\bll})$, 
define 
\begin{align*}
 \Uu{w} :=&\ (v,\bsigma^{\top})^{\top}, \quad 
 \Uu{w}_{\bll} := \ (v_h^{\bll},{\bsigma_h^{\bll}}^{\top})^{\top}.
\end{align*}

Using the notation and definitions above, we can write a sparse-$xt$ 
approximation $\hat{\Uu{w}}_{(\Uu{L}, \Uu{L}_0)}$ of $\Uu{w}$ 
using the combination formula \cite{sparseBG} in space and time dimensions as
\begin{equation}
\begin{aligned}
 \hat{\Uu{w}}_{(\Uu{L}, \Uu{L}_0)} 
 &= \sum_{\substack{\bll \in \Xi_{(\Uu{L}, \Uu{L}_0)}}} c_{\bll} \Uu{w}_{\bll}
 = (\hat{v}_{(\Uu{L}, \Uu{L}_0)}, \hat{\bsigma}_{(\Uu{L}, \Uu{L}_0)}^{\top})^{\top},
 \mathrm{where}\quad c_{\bll} = \begin{cases}
                                  +1,\ \mathrm{for } \abs{\bll} = L_x + L_{0,t}\\
                                  -1,\ \mathrm{for } \abs{\bll} = L_x + L_{0,t} - 1,
                                 \end{cases}
\end{aligned}
 \label{eq:xtCombForm}
\end{equation}
are the so-called ``combination coefficients''. 
We refer to $\Uu{w}_{\Uu{L}}$ as \emph{full-$xt$ DG numerical approximation}.
Then, a \emph{sparse-$xt$ DG numerical approximation} can be computed as follows:
\begin{enumerate}[label=(\alph*)]
\itemsep0em
 \item Given $\Uu{L}, \Uu{L}_0 \in \IN^2_0$, compute (in parallel) the space--time DG solution $\Uu{w}_{\bll}$
 for all $\bll \in \Xi_{(\Uu{L}, \Uu{L}_0)}$.
 \item Build the sparse-$xt$ numerical solution using the combination formula \eqref{eq:xtCombForm}.
\end{enumerate}

\subsection{Error vs.\ complexity analysis}\label{ss:errVsWork}
Given a maximal dyadic refinement level $L \in \IN$, 
fix the indices $L_{0,x} = L_{0,t} = 0$ and $L_x = L_t = L$.
We omit the $\Uu{L}_0$ vector in the sparse-$xt$ approximation for ease of notation.
Given $h_{0,x} = h_{0,t} = 0.5$ and $\bll \in \IN_0^2$, 
consider a uniform time mesh with $N_t = O((h_{l_t,t})^{-1})$ intervals
of uniform (time) step size $h_{l_t,t} = T/N_t$. 
We shall further assume that $h_{l_t,t} = O(2^{-l_t})$.
In the spatial domain $\Omega$,
we consider a regular partition of $\Omega$ into triangles 
of meshwidth $h_{l_x,x} = O(2^{-l_x})$. 
Then,
$h_{l,x} \simeq h_{l,t}$ holds for $l_x = l_t = l$, 
and in that case denote by $h_l$ the common spatial and temporal meshwidth.
Fix the polynomial degrees 
$p_{x,K}^{v} = p_{x,K}^{\bsigma} = p_x$ and $p_{t,K}^{v} = p_{t,K}^{\bsigma} = p_t$
for all $K \in \calT_h^{\bll}$.
Then,
$$
 M_{h}^{\bll} := \dim\big(\bV_\bp(\calT_h^{\bll})\big)
              = 6 (p_t+1) \frac{(p_x+1)(p_x+2)}{2} 2^{2l_x + l_t}
              = C(p_x, p_t)\cdot 2^{2l_x + l_t},
$$
is the number of degrees of freedom of the finite-dimensional space $\bV_\bp(\calT_h^{\bll})$.
Let $M_{\Uu{L}}^f$ and $M_{\Uu{L}}^s$ denote the number of degrees of freedom for 
full-$xt$ and sparse-$xt$ approximations, respectively. 

For a \emph{full-$xt$ approximation}, 
with equal resolution levels $L = l_x = l_t$ in space and time,
we have, as $L\to \infty$,
\begin{align*}
 M_{\Uu{L}}^f &:= M_{h}^{\Uu{L}} = C(p_x, p_t) \cdot 2^{3L} = O(h_L^{-3}).
\end{align*}
The overall number of degrees of freedom in a \emph{sparse-$xt$ approximation} 
equals asymptotically, as $L\to \infty$,
\begin{align*}
M_{\Uu{L}}^s = \sum_{\bll \in \Xi_L} M_h^{\bll}
&=C(p_x, p_t)\bigg(\sum_{l_x=0}^L 2^{2l_x+L-l_x}+\sum_{l_x=0}^{L-1} 2^{2l_x+L-l_x-1} \bigg)  \\
&= C(p_x, p_t) \left( 2^{L}\cdot (2^{L+1}-1) + 2^{L-1} \cdot (2^L-1) \right)\\
&= C(p_x, p_t)\cdot \frac{(5 - 3 \cdot 2^{-L})}{2}\cdot 2^{2L} 
< \frac{5}{2}\cdot C(p_x, p_t)\cdot 2^{2L} = O(h_L^{-2}).
\end{align*}
We consider fixed, and equal spatial and temporal discretization orders 
$p = \min{(p_x, p_t)} \geq 1$. 
Then, under the solution regularity assumptions of Proposition \ref{prop:errorBound}, 
we have for the full-tensor $xt$-DG scheme, as $L\to \infty$ (i.e., $h_L\to 0$)
\begin{align*}
 &\frac12 \N{c^{-1}(v-v\hp^{\Uu{L}})}_{L^2(\Omega\times\{t_n\})} 
    +\frac12\N{\bsigma-\bsigma\hp^{\Uu{L}}}_{L^2(\Omega\times\{t_n\})^2}\\
 & \qquad\qquad \qquad\qquad 
 \le \mathcal{E}^{ft}:=\abs{\vs-\vsh}\DGQn
   = O(h_L^{p+\frac12}) = 
   O([M_{\Uu{L}}^f]^{-\frac{p+1/2}{3}}).
\end{align*}
On the other hand, if we consider the error bound for the sparse-tensor $xt$-DG scheme 
based on the combination formula \eqref{eq:xtCombForm}, 
we expect the error to be bounded according to
\begin{align*}
   &\frac12 \N{c^{-1}(v-v\hp^{\Uu{L}})}_{L^2(\Omega\times\{t_n\})} 
    +\frac12\N{\bsigma-\bsigma\hp^{\Uu{L}}}_{L^2(\Omega\times\{t_n\})^2}\\
 & \qquad\qquad \qquad\qquad \le \mathcal{E}^{st} 
:=\abs{\vs-\vsh}\DGQn
= O(h_L^{p+\frac12}) = O([M_{\Uu{L}}^s]^{-\frac{p+1/2}{2}}).
\end{align*}
These asymptotic relations are valid in both cases, for the quasi-uniform and the corner-refined meshes.
This is demonstrated in the numerical experiments in \S \ref{ss:numexp_sparseXT}.

The sparse-tensor discretization requires considerably \emph{fewer degrees of freedom} 
than the standard, full-tensor scheme for comparable 
consistency order ($M^s_\bL=O(h_L^{-2})$ vs.\ $M^f_\bL=O(h_L^{-3})$),
under realistic solution regularity (allowing, in particular, for conical singularities
    in the spatial variable).

A further advantage of the combination formula is provided 
by the \emph{distribution of the sizes of the linear systems} to be solved, in parallel.
We recall that the numbers of degrees of freedom $M^f_\bL$ and $M^s_\bL$ calculated above 
are the sums of the dimensions of all systems to be solved.
The full-tensor $xt$-DG scheme requires the numerical solution of 
$O(h_L^{-1})=O(2^L)$ linear systems of size $O(h_L^{-2})=O(2^{2L})$ 
(i.e., one linear system solve per each time-step).
On the other hand, the sparse $xt$-DG approximation requires the solution of 
$\sum_{\bll\in\Xi_L}O(2^{l_t})=O(2^{L+1})$ many 
\emph{linear systems of different sizes} given by $O(2^{2l})$
for $l=0,\ldots, L$.
That is, only $O(1)$ systems of size $O(2^{2L})$ ($l_x=L$, $l_t=0$, 
i.e.\ finest mesh in space and coarsest in time),
twice as many systems of a quarter size $O(2^{2(L-1)})$ ($l_x=L-1$, $l_t=1$), \ldots, 
$O(2^L)$ systems of size $O(1)$ ($l_x=0$, $l_t=L$, i.e.\ coarsest mesh in space and finest in time).
Since $O(2^{2L})$ is the size of the linear system for the DG discretization of a two-dimensional elliptic problem on the finest spatial mesh, the computational effort of the sparse $xt$-DG scheme amounts to that of this elliptic problem.

\subsection{Numerical experiments}
\label{ss:numexp_sparseXT}

Our implementation to compute the sparse space--time solution is serial. 
However, it is important to note that the space--time combination formula \eqref{eq:xtCombForm} is 
\emph{inherently parallel}. 
All the solutions $\Uu{w}_{\bll}$, for $\bll \in \Xi_{(\Uu{L}, \Uu{L}_0)}$, 
can be computed simultaneously using multiple processors. 
Computing a linear combination of these anisotropically discretized (so-called ``detail'') solutions 
can be implemented as a reduction operation across processors.

We re-run the numerical experiments described in Test 1, \S\ref{ss:test_1.1}, and Test 2, \S\ref{ss:test_1.2}, 
for the sparse-$xt$ DG scheme. A comparison of the full-$xt$ and the sparse-$xt$ DG schemes, 
in terms of their accuracy versus the total number of degrees of freedom used, 
is shown in Figures \ref{fig:test21_FGvsSG12} and \ref{fig:test22_FGvsSG12}.

In particular, for $p = p_{x}^{v} = p_{x}^{\bsigma} = p_{t}^{v} = p_{t}^{\bsigma}\in\{1,2\}$ and $\alpha=\beta=1$ we observe in the first rows of Figure~\ref{fig:test21_FGvsSG12} (smooth solution, quasi-uniform mesh) and Figure~\ref{fig:test22_FGvsSG12} (singular solution, locally refined mesh) that the errors decay as $O([M_\bL^f]^{{-}\frac{p+1}3})$ for the full-tensor product space, and as $O([M_\bL^s]^{{-}\frac{p+1}2})$ for the sparse tensor product space (both are slightly better than what is expected from the theory).
Further experiments show that the errors for the two fields $v$ and $\bsigma$ decay with the same rate (cf.\ the first two rows of Figures 7--10 in the first arXiv version of the present  preprint). 
If $p_x^\bsigma$ is lowered to $p-1$ and the flux parameters are scaled as $\alpha^{-1}=\beta=h_{F_\bx}$, then the errors for $\bsigma$ reduce to 
$O([M_\bL^f]^{{-}\frac{p}3})$ and $O([M_\bL^s]^{{-}\frac{p}2})$, while those for $v$ behave more erratically;
error plots for this choice of the parameters are shown in the second rows of Figure~\ref{fig:test21_FGvsSG12} (smooth solution, quasi-uniform mesh) and Figure~\ref{fig:test22_FGvsSG12} (singular solution, locally refined mesh).

\begin{figure}[htb]
\begin{center}
 $p = p_{t}^{v} = p_{t}^{\bsigma} = p_{x}^{v} = p_{x}^{\bsigma}$, \ $\alpha = \beta = 1$\\
\begin{minipage}{0.5\textwidth}
\centering $p=1$
 \includegraphics[width = \textwidth, clip, trim=40 0 40 100]{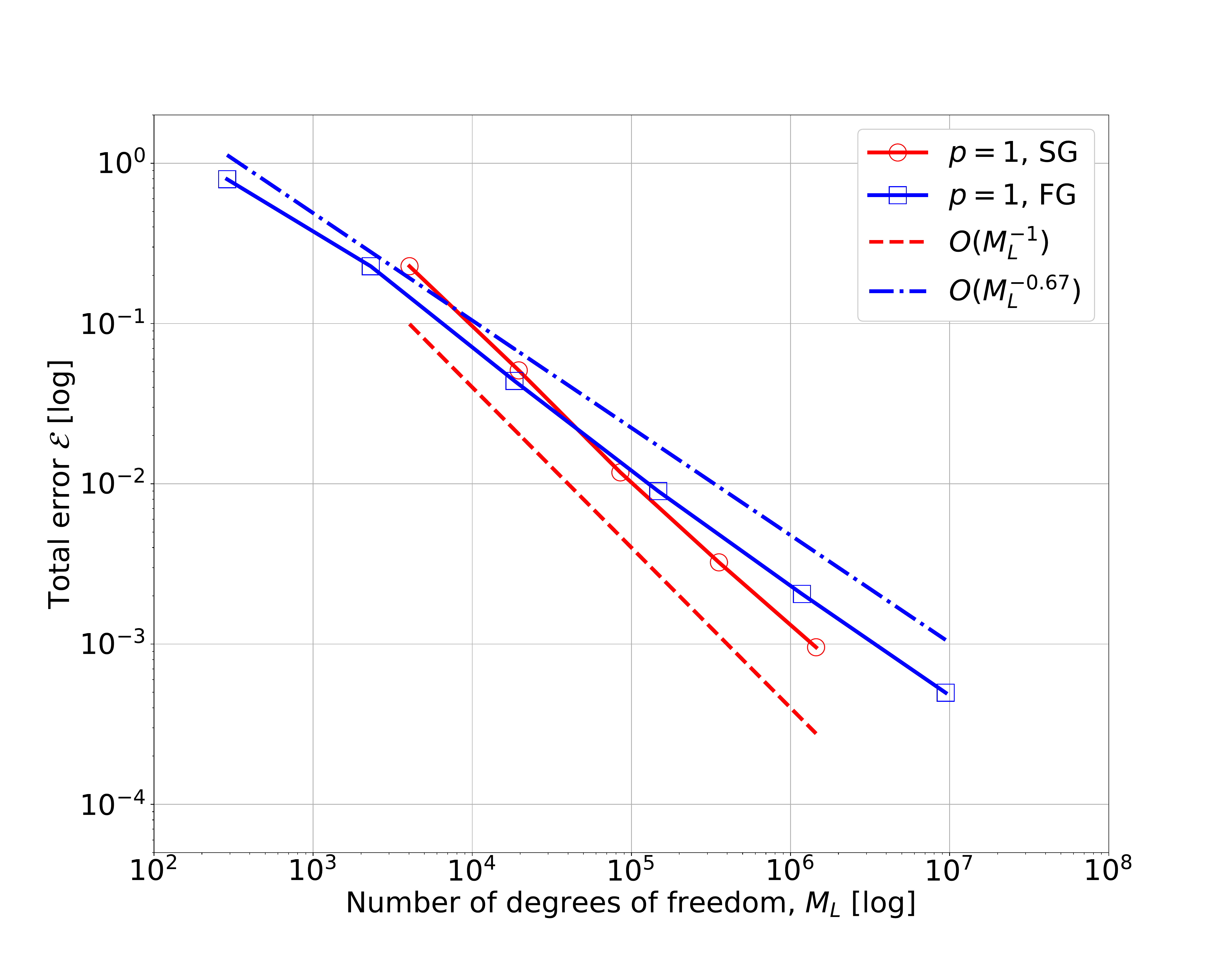}
\end{minipage}\begin{minipage}{0.5\textwidth}
\centering $p=2$
 \includegraphics[width = \textwidth, clip, trim=40 0 40 100]{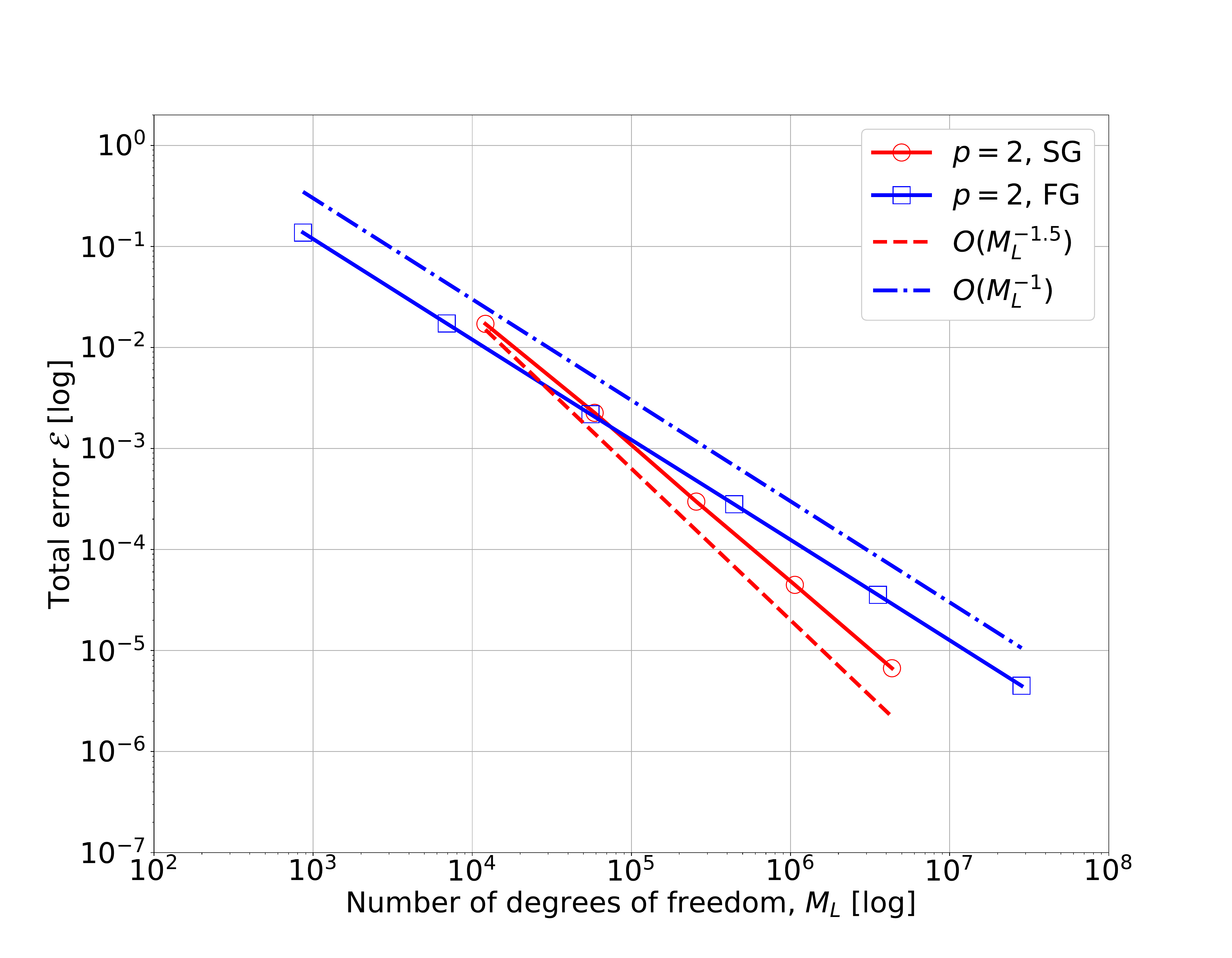}
\end{minipage}
$p = p_{t}^{v} = p_{t}^{\bsigma} = p_{x}^{v} = p_{x}^{\bsigma} + 1$, \ $\alpha^{-1} = \beta = h_{F_{\bx}}$\\
\begin{minipage}{0.5\textwidth}
\centering
 \includegraphics[width = \textwidth, clip, trim=40 40 40 100]{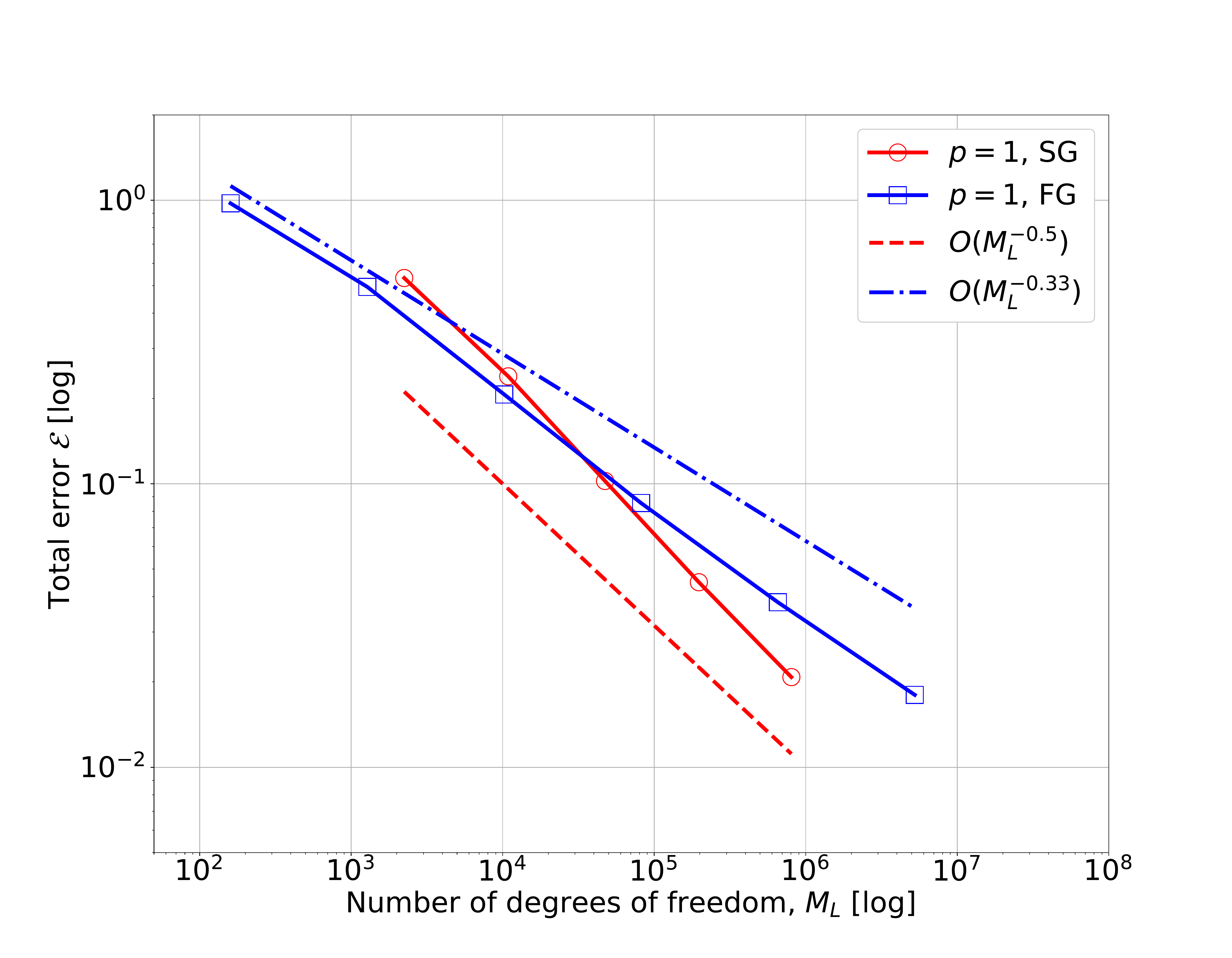}
\end{minipage}\begin{minipage}{0.5\textwidth}
\centering
 \includegraphics[width = \textwidth, clip, trim=40 40 40 100]{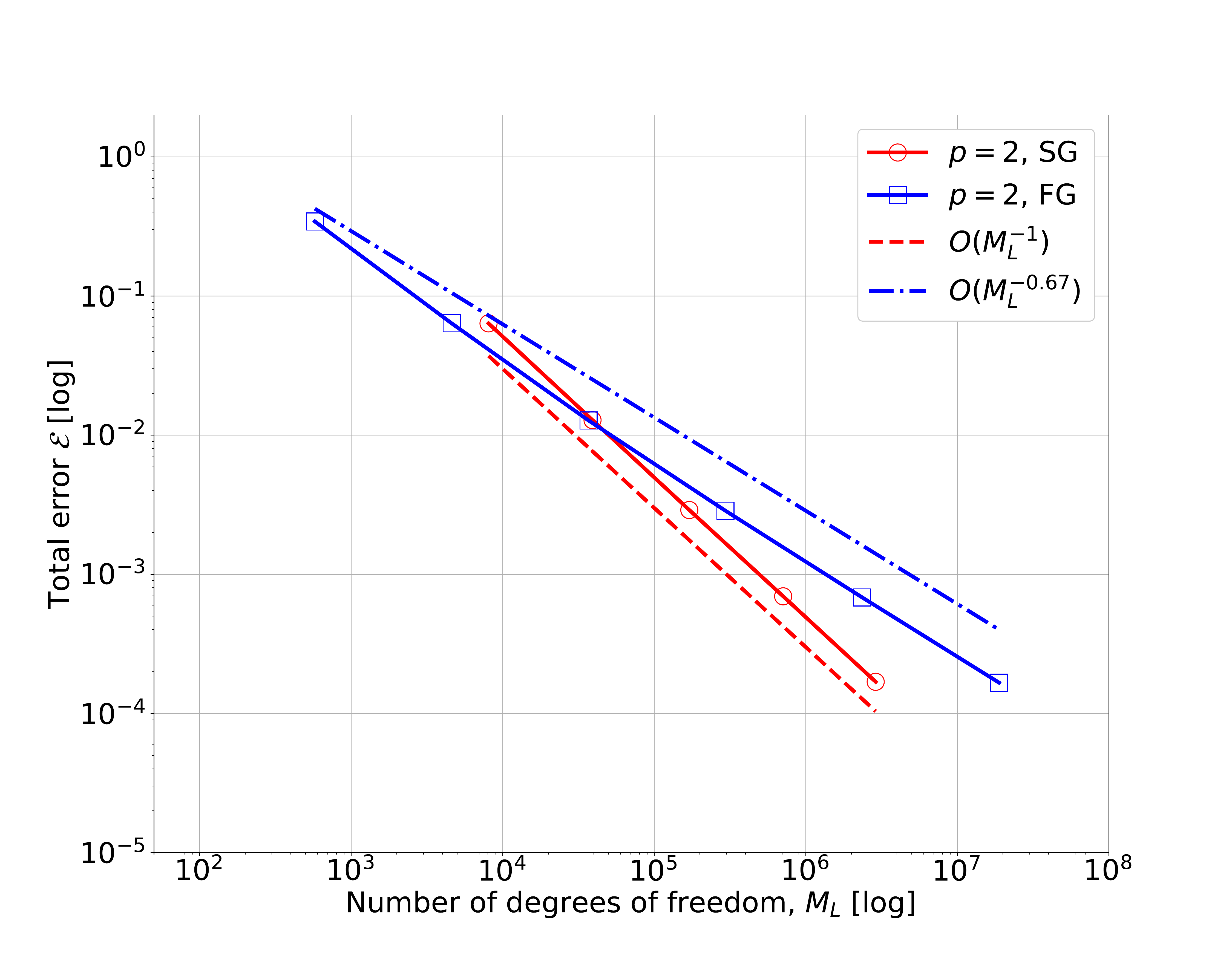}
\end{minipage}
\caption{\small {
Numerical results for \emph{Test 1}, as described in \S \ref{ss:test_1.1};
FG: \textbf{full-tensor} $xt$-DG, SG: \textbf{sparse-tensor} $xt$-DG.
The polynomial degrees and the stabilization parameters are chosen as $p = p_{t}^{v} = p_{t}^{\bsigma} = p_{x}^{v} = p_{x}^{\bsigma}$, $\alpha = \beta = 1$ (first row) and $p = p_{t}^{v} = p_{t}^{\bsigma} = p_{x}^{v} = p_{x}^{\bsigma} + 1$,  $\alpha^{-1} = \beta = h_{F_{\bx}}$ (second row), 
with $p=1$ (left column) and $p=2$ (right column).
The meshwidth for full-tensor $xt$-DG scheme is $h_{\bx} \approx h_t=2^{-l}$, for $l = 1,\ldots,6$.
The sparse-tensor $xt$-DG parameters are chosen as $h_{0,x} \approx h_{0,t} = 0.5$, $L_{0,x} = 0$, $L_{0,t} = 1$ and $L_x = L_t - 1 = L$, for $L = 1,\ldots,5$.
}}
\label{fig:test21_FGvsSG12}
\end{center}
\end{figure}

\begin{figure}[htb]
\begin{center}
 $p = p_{t}^{v} = p_{t}^{\bsigma} = p_{x}^{v} = p_{x}^{\bsigma}$, \  $\alpha = \beta = 1$\\
\begin{minipage}{0.5\textwidth}
\centering $p=1$
 \includegraphics[width = \textwidth, clip, trim=40 0 40 100]{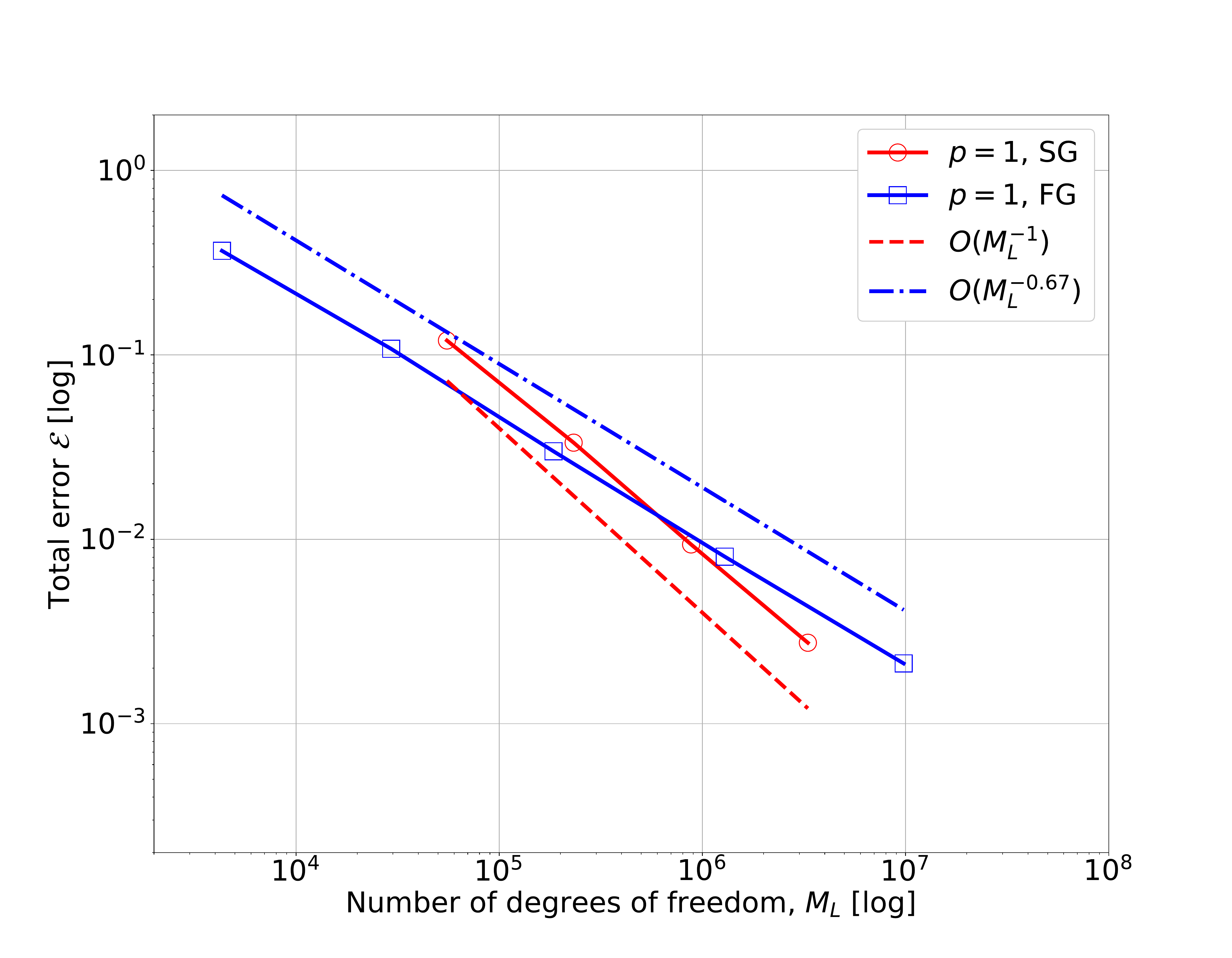}
\end{minipage}\begin{minipage}{0.5\textwidth}
\centering $p=2$
 \includegraphics[width = \textwidth, clip, trim=40 0 40 100]{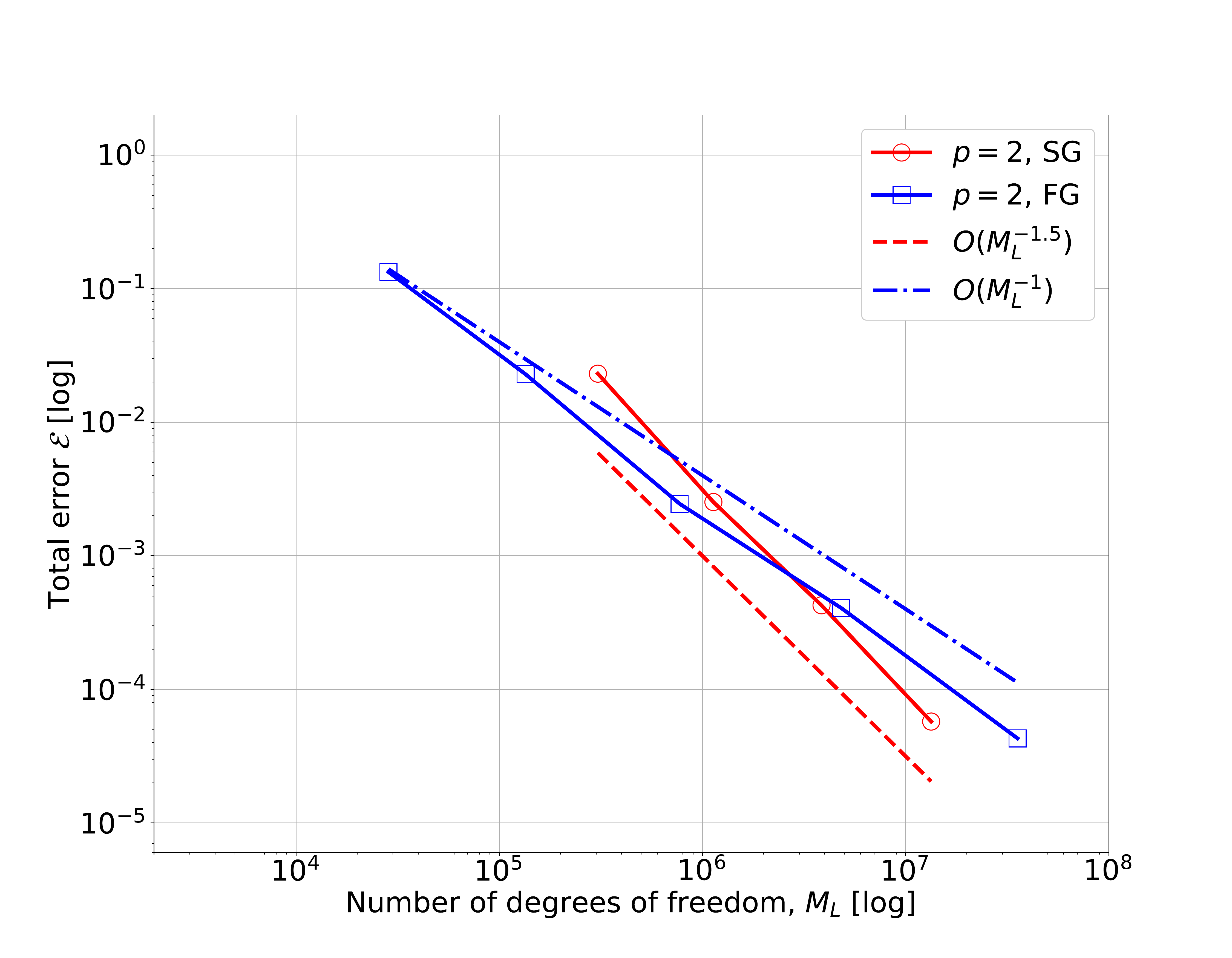}
\end{minipage}
$p = p_{t}^{v} = p_{t}^{\bsigma} = p_{x}^{v} = p_{x}^{\bsigma} + 1$, \ $\alpha^{-1} = \beta = h_{F_{\bx}}$\\
\begin{minipage}{0.5\textwidth}
\centering
 \includegraphics[width = \textwidth, clip, trim=40 40 40 100]{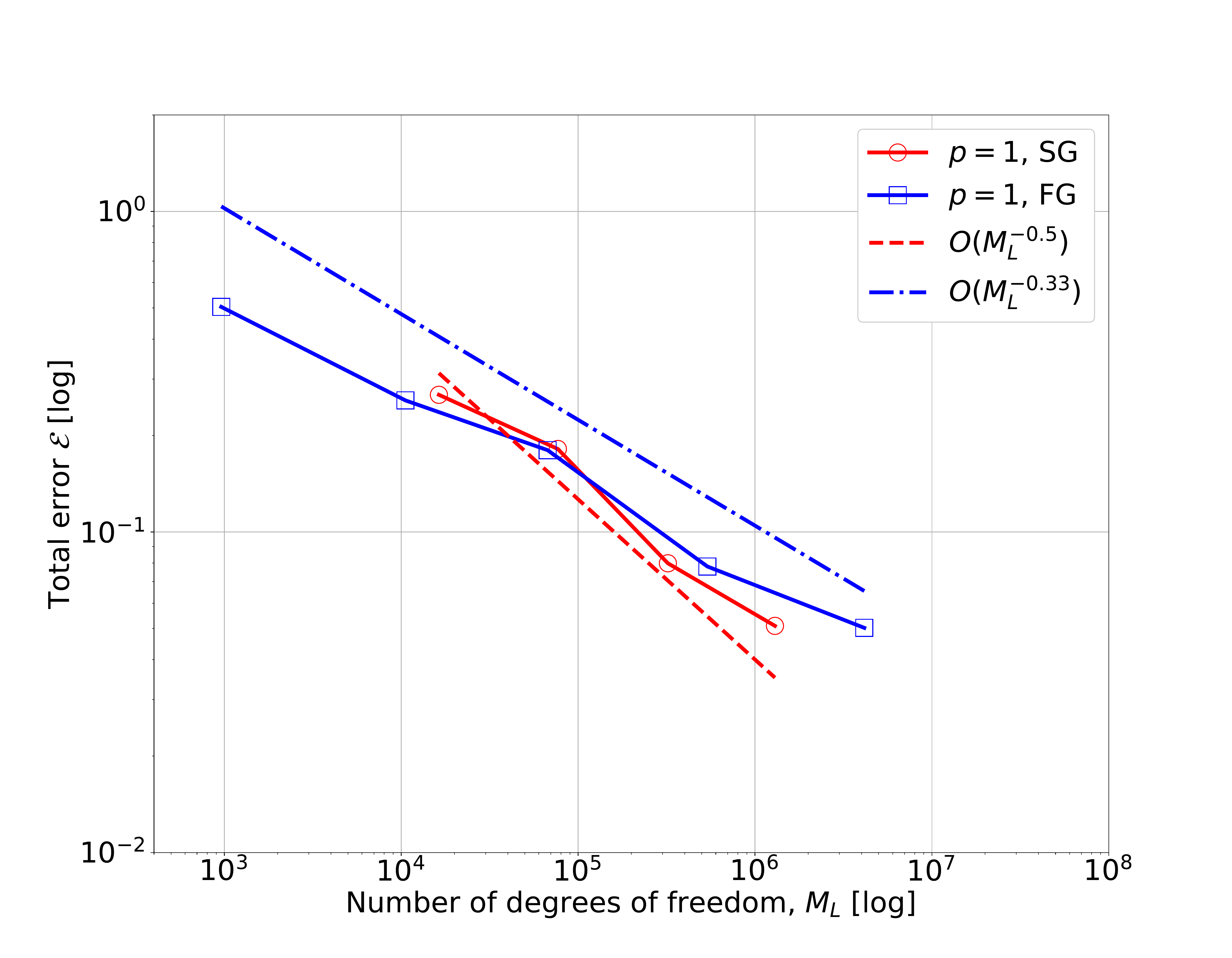}
\end{minipage}\begin{minipage}{0.5\textwidth}
\centering
 \includegraphics[width = \textwidth, clip, trim=40 40 40 100]{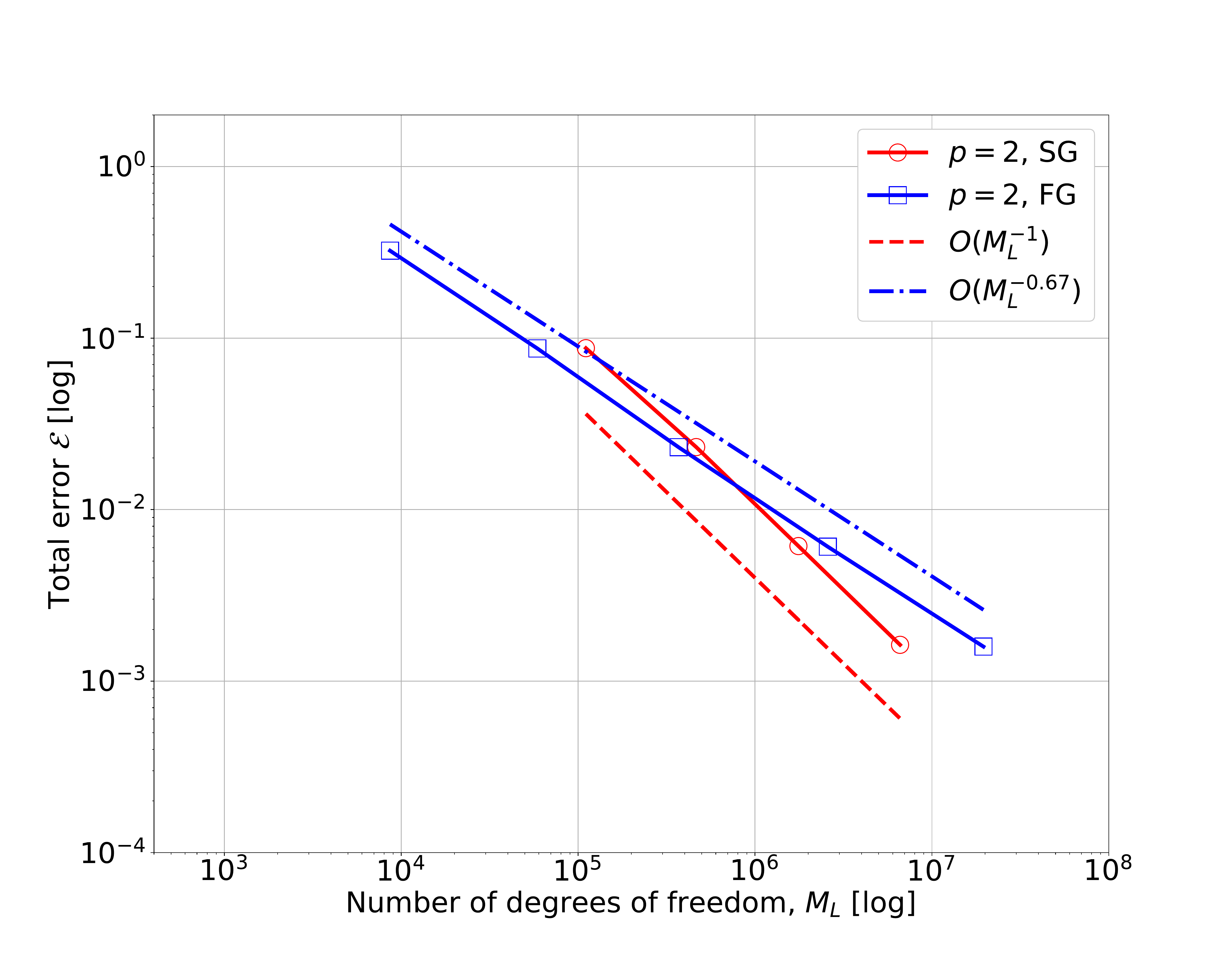}
\end{minipage}
\caption{\small {
Numerical results for \emph{Test 2}, as described in \S \ref{ss:test_1.2};
FG: \textbf{full-tensor} $xt$-DG, SG: \textbf{sparse-tensor} $xt$-DG.
The polynomial degrees and the stabilization parameters are chosen as $p = p_{t}^{v} = p_{t}^{\bsigma} = p_{x}^{v} = p_{x}^{\bsigma}$, $\alpha = \beta = 1$ (first row) and $p = p_{t}^{v} = p_{t}^{\bsigma} = p_{x}^{v} = p_{x}^{\bsigma} + 1$,  $\alpha^{-1} = \beta = h_{F_{\bx}}$ (second row), with $p=1$ (left column) and $p=2$ (right column).
The meshwidth for full-tensor $xt$-DG scheme is $h_{\bx} \approx h_t=2^{-l}$, for $l = 2,\ldots,6$.
 The sparse-tensor $xt$-DG parameters are chosen as $h_{0,x} \approx h_{0,t} = 0.25$, $L_{0,x} = 0$, $L_{0,t} = 1$ and $L_x = L_t - 1 = L$, for $L = 1,\ldots,4$.
}}
 \label{fig:test22_FGvsSG12}
\end{center}
\end{figure}

\section{Conclusions}
\label{s:Concl}
We analyzed a space--time discontinuous Galerkin discretization 
based on the variational formulation
from \cite{MoPe18} for the linear, acoustic wave equation.
We admitted, in particular, spatial mesh refinement
in polygonal domains and for multi-material configurations where
transmission conditions are imposed at material interfaces.
The proposed DG discretization is unconditionally stable with respect to space 
and time-step sizes which arise in such space--time partitions.
Using recent regularity results for the time-domain acoustic wave equation,
we proved that spatial mesh grading towards corners and multi-material
interface points can restore the maximal possible convergence rate.
Numerical experiments for piecewise-constant coefficient in polygonal domains with 
manufactured exact solutions, which exhibit conical diffraction singularities,
confirmed the convergence rate analysis.
For piecewise-homogeneous materials occupying
polygonal subdomains that are meshed exactly, the polynomial shape functions in the DG scheme give rise to cell contributions that can be evaluated exactly. 
For more general, (piecewise) smoothly varying coefficient functions, 
numerical integration has to be used in the setup of the proposed DG scheme. 
We expect that the presently developed stability bounds 
and the consistency analysis for the space--time DG scheme extend 
to variable coefficients subject to a quadrature consistency error analysis.

We also presented a novel, \emph{sparse space--time DG discretization} 
admitting local mesh refinement towards point singuarities in the spatial domain.
It is based on the \emph{combination} of several (possibly parallel) 
numerical solutions of the space--time DG scheme for a wide range of 
space and time-steps, 
including the mentioned spatial local mesh refinement towards conical singular points.
The unconditional stability of the full tensor space--time DG scheme is essential
here, as the combination technique will always access CFL-violating combinations
of space and time step sizes.
As we showed in numerical experiments, this strategy furnishes the maximal asymptotic convergence order afforded by the discretization, even in the presence of conical singularities, and with overall work scaling as the solution of one elliptic spatial solve (i.e., one implicit time step) on the finest spatial grid.

In numerical experiments (\S\ref{ss:numexp_sparseXT}) for model singular problems,
the sparse space--time DG scheme (with polynomial degree $p=1,2$)
actually outperformed the corresponding full--tensor
space--time DG scheme in terms of (relative) 
error versus total number of degrees of freedom, albeit only in the
error regime below one percent. 
A consistency error analysis of the sparse space--time DG approximation is under
investigation.

As mentioned, the unconditional stability of the considered space--time DG method is
important in view of widely varying spatial step-sizes due to mesh refinement near point singularities in the spatial domain.
This is also key in facilitating the sparse--tensor space--time DG algorithm: 
the terms which enter the combination formula
involve arbitrary combinations of spatial and temporal step sizes,
in particular, therefore, CFL-violating pairs. 
Furthermore, the global space--time nature of the discretization 
suggests that the presently proposed space--time DG method
is very robust with respect to the control of \emph{numerical dispersion}, 
which usually is significant in explicit time-marching schemes.
A detailed account of this will be the subject of future investigations.

Recently, there has been considerable interest in time-explicit schemes for acoustic and seismic wave propagation. 
Strong corner mesh refinement in the spatial domain as considered here will, however, 
generally entail rather \emph{anisotropic space--time slabs} 
which require prohibitively small explicit time-steps in elements
situated close to corners.
This could be addressed either by \emph{implicit local time-stepping} or by 
multi-level, explicit local time-stepping, as recently advocated e.g. in \cite{DiazGrote2015}; 
an error and stability analysis for explicit, local time-stepping of the presently proposed,
graded and bisection-tree meshes is yet to be developed.

The presently proposed space--time DG discretizations will extend also to other linear, second-order wave equations, 
in particular to elastic and electromagnetic waves, and to three-dimensional domains. 
These issues will be addressed in detail elsewhere.

\section*{Acknowledgments}\addcontentsline{toc}{section}{Acknowledgments}

The Authors would like to acknowledge the kind hospitality of the
Erwin Schr\"odinger International Institute for Mathematics and
Physics (ESI), where part of this research was developed under the
frame of the Thematic Programme {\it Numerical Analysis of Complex PDE
Models in the Sciences}.

P. Bansal acknowledges support from the project \emph{ModCompShock}, 
funded by the European Union's Horizon 2020 research and innovation programme
under the Marie Sklodowska-Curie grant agreement number 642768.
A.\ Moiola acknowledges support from GNCS-INDAM, from PRIN project
``NA\_FROM-PDEs'' and from MIUR through the ``Dipartimenti di
Eccellenza'' Programme (2018-2022)--Dept.\ of Mathematics, University
of Pavia.
I.\ Perugia has been funded by the Austrian Science Fund (FWF) through the projects F 65 and P 29197-N32.

\end{document}